\definecolor{refkey}{rgb}{0,0,1}
\definecolor{labelkey}{rgb}{1,0,0}
\numberwithin{equation}{section}
\newtheorem{theorem}{Theorem}[section]
\newtheorem{proposition}[theorem]{Proposition}
\newtheorem{lemma}[theorem]{Lemma}
\newtheorem{corollary}[theorem]{Corollary}
\newtheorem{Definition}[theorem]{Definition}
\newenvironment{definition}{\begin{Definition}\rm}{\end{Definition}}
\newtheorem{Remark}[theorem]{Remark}
\newenvironment{remark}{\begin{Remark}\rm}{\end{Remark}}
\newtheorem{Example}[theorem]{Example}
\newenvironment{example}{\begin{Example}\rm}{\end{Example}}
\newtheorem{RHproblem}[theorem]{RH problem}
\newcommand{\C}{\mathbb{C}}
\newcommand{\R}{\mathbb{R}}
\renewcommand{\AA}{\mathcal A}
\newcommand{\II}{\mathcal I}
\newcommand{\MM}{\mathcal M}
\newcommand{\PP}{\mathcal P}
\newcommand{\OO}{\mathcal{O}}
\renewcommand{\Re}{{\rm Re} \,}
\renewcommand{\Im}{{\rm Im} \,}
\renewcommand{\hat}{\widehat}
\renewcommand{\tilde}{\widetilde}
\begin{document}
\title{Logarithmic potential theory and large deviation}
\author{T. Bloom, N. Levenberg, and F. Wielonsky}
\maketitle \noindent
{\bf Authors:}\\[\baselineskip]
T. Bloom, bloom@math.toronto.edu\\
University of Toronto, Toronto, Ontario M5S 2E4 CANADA
\\[\baselineskip]
N. Levenberg, nlevenbe@indiana.edu\\
Indiana University, Bloomington, IN 47405 USA\\
Phone: 812-855-0957 \ \ FAX: 812-855-0046
\\[\baselineskip]
F. Wielonsky, franck.wielonsky@univ-amu.fr\\
Universit\'e Aix-Marseille, CMI 39 Rue Joliot Curie\\
F-13453 Marseille Cedex 20, FRANCE 
\\[\baselineskip]
\begin{abstract}
We derive a general large deviation principle for a canonical sequence of probability measures, having its origins in random matrix theory, on unbounded sets $K$ of $\C$ with weakly admissible external fields $Q$ and very general measures $\nu$ on $K$. For this we use logarithmic potential theory in $\R^{n}$, $n\geq 2$, and a standard contraction principle in large deviation theory which we apply from the two-dimensional sphere in $\R^{3}$ to the complex plane $\C$.
\end{abstract}
\vspace{1cm}
\hskip12pt {\it This paper is dedicated to our good friend and colleague, Ed Saff.}\\[\baselineskip]
{\bf Keywords}: Large deviation principle, logarithmic potential theory\\[\baselineskip]
{\bf MSC}: 60F10, 31B15
\newpage
\section{Introduction and main results} 

Let $K$ be a closed subset of the complex plane $\C$ and $\nu$ a measure on $K$. For $k=1,2,...$, we will be concerned with the following ensemble of probability measures on $K^{k+1}$:

$$\frac{1}{Z_k}|VDM(z_0,...,z_k)|^{2\beta}\exp \Big(-2k[Q(z_0)+\cdots + Q(z_k)]\Big)d\nu (z_0)...d\nu (z_k).$$

Here \begin{itemize}
\item $Z_k$ is a normalization constant;
\item $VDM(z_0,...,z_k)=\prod_{0\leq  i<j\leq k}(z_j-z_i)$
 is the usual Vandermonde determinant;
\item $Q:K\rightarrow (-\infty,+\infty]$ is a lower semicontinuous function; and
\item $\beta >0$.
  \end{itemize}

\noindent These probability measures occur in random matrix theory as the joint probability of eigenvalues and also in the theory of Coulomb gases, where $z_0,...,z_k$ are the positions of particles. They have been extensively studied but generally only when $\nu$ is Lebesgue measure (cf., \cite{AGZ} or \cite{hpbook}).

We will deal with the global behavior as $k\rightarrow \infty$. In particular, we study the almost sure convergence of the empirical measure of a random point $\frac{1}{k+1}\sum_{i=0}^k\delta_{z_i}$ to the equilibrium measure given by the unique minimizer of the weighted energy functional; i.e., 
$$\inf \{I^Q(\mu): \ \mu\in \mathcal{M}(K)\}$$
where $\mathcal{M}(K)$ are the probability measures on $K$ and 
$$I^Q(\mu)=\int_K\int_K\log \frac{1}{|z-t|w(z)w(t)}d\mu(z)d\mu(t),$$ with $w(z)=\exp(-Q(z))$. We will also establish  a large deviation principle (LDP).

Ben Arous and Guionnet \cite{BG}, building on work of Voiculescu, first proved  a large deviation principle for the Gaussian Unitary Ensemble. This was subsequently extended to general unitary invariant ensembles. Hiai and Petz \cite{hpbook} extended these methods to 
the complex plane and {\it strongly admissible} (see Definition \ref{admit}) continuous weights $Q$. In these settings, $\nu$ was taken to be Lebesgue measure. 

More recently, the case of {\it weakly admissible weights} (see Definition \ref{admit}) on unbounded subsets of the plane was studied in \cite{H} and the existence of a unique minimizer of the weighted energy functional (which in this case may not have compact support) was established. In \cite{[ESS]} a large deviation principle was established for $Q$ weakly admissible, continuous on $\R$ or $\C$ and $\nu$ the Lebesgue measure. Such weights occur in certain ensembles (see \cite{[ESS]}, the Cauchy ensemble) and in certain vector energy problems (see \cite{H}).

In this paper we will systematically develop  the case when $Q$ is lower semicontinuous, weakly admissible and $\nu$ is more general than Lebesgue measure. We will use the methods of \cite{VELD} which first of all give the almost sure convergence of the empirical measure of a random point, and, subsequently, we obtain a large deviation principle.

The paper is organized as follows: in the next section, we give some basic results on logarithmic potential theory in $\R^n$ valid for $n\geq 2$. Using the results in $\R^{3}$ together with inverse stereographic projection from a two-dimensional sphere ${\bf S}$ to the complex plane, in section 3 we readily extend some classical potential-theoretic results valid for compact subsets of $\C$ to closed, unbounded sets with weakly admissible weights.

In sections 4-7, we return to the setting of compact sets $K$ in $\R^n$ and admissible weights (see Definition \ref{realad}; such weights need only be lower semicontinuous). Corollary \ref{416} establishes the almost sure convergence of the empirical measure of a random point to the equilibrium measure in this setting, for appropriate measures $\nu$.

 Our next  goal is to show that two functionals $\underline J$ and $\overline J$ on the space $\mathcal M(K)$ of probability measures coincide. These functionals are defined as asymptotic $L^2(\nu)-$averages of Vandermonde determinants with respect to a {\it Bernstein-Markov} measure $\nu$ on $K$. As in previous work (cf., \cite{VELD} and \cite{PELD}), weighted versions of these functionals are of essential use (Theorem \ref{rel-J-E}). This equality immediately yields a large deviation principle in this $\R^n$ setting, Theorem \ref{ldp}, in which the rate function is given in terms of the weighted energy functional independent of the Bernstein-Markov measure $\nu$.

In section 8, we deal with compact subsets of the sphere in $\R^3$ and measures of infinite mass, again establishing a LDP (Theorem \ref{ldp-S}). Measures of infinite mass arise as the push-forward of measures on unbounded subsets of the plane under stereographic projection.

Our ultimate goal, achieved in sections 9 and 10, is to utilize the $\R^n$ result to prove the analogous equality of the appropriate $J-$functionals for probability measures on closed, unbounded sets in $\C$ allowing weakly admissible weights and very general measures of infinite mass (Theorem \ref{85}). Then, via a contraction principle, we obtain  an LDP (Theorem \ref{ldp2}):  

\begin{theorem} Let $K\subset \C$ be closed, and let $Q$ be a weakly admissible weight on $K$. Assume $(K,\nu,Q)$ satisfies the weighted Bernstein-Markov property (\ref{unboundedBM}).
If $\nu$ has finite mass, assume that $(K,\nu)$ satisfies a strong Bernstein-Markov property while if $\nu$ has infinite mass in a neighborhood of infinity, assume that (\ref{cond-nu}) and (\ref{cond-eps}) are satisfied for some function $\epsilon(z)$.
Define a sequence $\{\sigma_k\}$ of probability measures on $\mathcal M(K)$ by
$$\sigma_k(G) = \frac{1}{Z_k}\int_{\tilde G_k} |VDM(z_0,..., z_k)|^2\prod_{i=0}^k e^{-2kQ(z_i)}\prod_{i=0}^k d\nu(z_i)$$ 
where $\tilde G _k= \{ (z_0,..., z_k)\in K^{k+1} : \frac{1}{k+1}\sum \delta_{z_i}\in G\}$. Then $\{\sigma_k\}$ satisfies a {\bf large deviation principle} with speed $k^{2}$ and good rate function $\mathcal I:=\mathcal I_{K,Q}$ where, for $\mu \in \mathcal M(K)$,
$$
\mathcal I(\mu)=I^Q(\mu)-I^Q(\mu_{K,Q}).
$$
 \end{theorem}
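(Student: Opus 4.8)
The plan is to deduce Theorem 1.1 (the statement labeled \texttt{ldp2}) from the two ingredients advertised in the introduction: the equality of the $J$-functionals on $\mathcal M(K)$ for closed unbounded sets in $\C$ (Theorem \ref{85}), and a contraction principle applied from the sphere $\mathbf S\subset\R^3$ to $\C$. First I would recall that via inverse stereographic projection $\pi^{-1}\colon\C\to\mathbf S$, the triple $(K,\nu,Q)$ is transported to a triple $(\hat K,\hat\nu,\hat Q)$ on a compact subset of the sphere, where $\hat K=\overline{\pi^{-1}(K)}$ (adding the point at infinity when $K$ is unbounded) and $\hat\nu$ may have infinite mass near that point; the weak admissibility of $Q$ is exactly what makes $\hat Q$ behave like an admissible weight on the compact set $\hat K$ in the sense of Definition \ref{realad}, after incorporating the conformal factor $\log(1+|z|^2)$ coming from the change of the logarithmic kernel under stereographic projection. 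The Bernstein--Markov hypotheses (\ref{unboundedBM}), the strong Bernstein--Markov property in the finite-mass case, and conditions (\ref{cond-nu}), (\ref{cond-eps}) in the infinite-mass case are precisely the conditions that guarantee $(\hat K,\hat\nu,\hat Q)$ satisfies the Bernstein--Markov property needed to invoke the sphere LDP, Theorem \ref{ldp-S}.

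Next I would set up the correspondence at the level of empirical measures. Pushing forward by $\pi^{-1}$ gives a homeomorphism $\mathcal M(K)\to\{\hat\mu\in\mathcal M(\mathbf S):\hat\mu(\{\infty\})=0\}$, and on these measures the weighted energy functionals match up: $I^Q(\mu)$ equals the corresponding weighted energy $\hat I^{\hat Q}(\hat\mu)$ on the sphere, up to the additive constants produced by the conformal factor, which cancel when one subtracts the value at the minimizer. The measures $\sigma_k$ on $\mathcal M(K)$ defined in the statement are, under this identification, exactly the push-forwards of the sphere measures $\hat\sigma_k$ from Theorem \ref{ldp-S}; one checks this by the change of variables $z_i\mapsto\pi^{-1}(z_i)$ in the integral defining $\sigma_k(G)$, where the Vandermonde factor, the weight, and $d\nu$ transform into the sphere Vandermonde, $e^{-2k\hat Q}$, and $d\hat\nu$ respectively (again modulo the conformal factors that are absorbed into $\hat Q$). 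Since the map $\iota\colon\mathbf S\to\C\cup\{\infty\}$ (and hence the induced map on $\mathcal M$) is continuous, the standard contraction principle in large deviation theory applies: $\{\sigma_k\}$ satisfies an LDP with speed $k^2$ and rate function $\hat{\mathcal I}$ obtained by infimizing the sphere rate function over fibers. Because $\iota$ restricted to $\mathbf S\setminus\{\infty\}$ is a bijection onto $\C$ and the mass at $\infty$ is forced to be $0$ for any measure of finite energy (weak admissibility rules out escape of mass to infinity at the energy minimizer, and more generally atoms have infinite logarithmic energy), the infimum over each fiber is attained at the single preimage, so $\hat{\mathcal I}$ is just $\mathcal I(\mu)=I^Q(\mu)-I^Q(\mu_{K,Q})$.

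Finally I would verify the two defining properties of a good rate function: lower semicontinuity of $\mathcal I$ on $\mathcal M(K)$ and compactness of sublevel sets $\{\mathcal I\le\alpha\}$. Lower semicontinuity of $\mu\mapsto I^Q(\mu)$ is standard from the lower semicontinuity of the kernel $\log\frac1{|z-t|w(z)w(t)}$ together with the portmanteau/Fatou argument (this is where weak admissibility, ensuring $I^Q$ is bounded below and its minimum $I^Q(\mu_{K,Q})$ is finite, enters); the sublevel sets are then closed, and their tightness follows because the growth of $Q$ relative to $\log|z|$ at infinity (weak admissibility) forces uniform control on the mass escaping to infinity. Alternatively, all of this is inherited for free from the fact that $\hat{\mathcal I}$ is a good rate function on the compact space $\mathcal M(\mathbf S)$ by Theorem \ref{ldp-S} and that the contraction of a good rate function along a continuous map is again good, plus the observation that $\{\hat{\mathcal I}<\infty\}$ sits inside the open set where the atom at $\infty$ vanishes.

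The main obstacle I anticipate is not the contraction principle itself, which is soft, but the bookkeeping of the conformal factor under stereographic projection: one must track precisely how $|z_j-z_i|$, $e^{-kQ(z_i)}$, $d\nu(z_i)$, and the normalization $Z_k$ each transform, confirm that the $\log(1+|z|^2)$ terms reassemble into an admissible weight $\hat Q$ on $\hat K$ and into exactly the additive constant that Theorem \ref{85} and Theorem \ref{ldp-S} are stated to absorb, and check that the Bernstein--Markov hypotheses listed in the theorem are equivalent to the sphere-side Bernstein--Markov property — in particular that conditions (\ref{cond-nu}) and (\ref{cond-eps}) are exactly what is needed to handle the infinite mass of $\hat\nu$ at the point at infinity without destroying the Bernstein--Markov estimate. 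Once that dictionary is in place, Theorem \ref{85} supplies the equality $\underline J=\overline J=\mathcal I$ needed to identify the rate function, and the LDP follows.
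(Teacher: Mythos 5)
Your overall route is the same as the paper's: transport $(K,\nu,Q)$ to $(T(K),T_*\nu,\tilde Q)$ on the sphere, invoke the sphere LDP (Theorems \ref{ldp} and \ref{ldp-S}), pull back by the homeomorphism induced by $T^{-1}$ via the contraction principle (Theorem \ref{contract}), and get goodness of the rate function from lower semicontinuity of $I^{\tilde Q}$ on the compact space $\mathcal M({\bf S})$. However, there is a genuine gap in your handling of the Bernstein--Markov input, precisely at the point you dismiss as ``bookkeeping.'' First, the hypotheses of the theorem in the infinite-mass case do \emph{not} include a strong Bernstein--Markov property for $(K,\nu)$, whereas the sphere results you want to quote (Theorem \ref{ldp} via Theorem \ref{rel-J-E}, and Theorems \ref{rel-JE-S}, \ref{ldp-S}) are stated under a (strong) Bernstein--Markov assumption: the approximation step (Corollary \ref{corapprox}) forces one to use the Bernstein--Markov property for a whole sequence of continuous weights $Q_m$, not just for the given $Q$. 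Your assertion that (\ref{unboundedBM}) together with (\ref{cond-nu})--(\ref{cond-eps}) is ``precisely'' what guarantees the sphere-side hypothesis is unproved and not straightforwardly true. The paper closes exactly this gap by two observations you omit: (i) the sphere proofs only ever use the Bernstein--Markov inequality for expressions of the special form (\ref{special-pol}) (Vandermonde-type factors, hence of exact maximal degree), and only for the weights $Q_m=-U^{\mu_m}$ coming from Corollary \ref{corapprox}; and (ii) these weights correspond on $K$ to weights satisfying (\ref{simeonov}) with $M=-U^{\mu_m}(P_0)<\infty$, so that when $\nu$ has infinite mass near infinity the needed Bernstein--Markov inequality for exact-degree polynomials is automatic with $M_k=1$ (Remark \ref{BM-Simeo}). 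Without this argument, your invocation of Theorem \ref{ldp-S} uses a hypothesis that is not among the assumptions of the theorem you are proving.

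A related, smaller inaccuracy in your dictionary: the weighted Bernstein--Markov property (\ref{unboundedBM}) concerns polynomials in the complex variable $z$, and under $T$ these correspond only to the special expressions (\ref{special-pol}) on ${\bf S}$, not to all real polynomials in three variables; so even in the finite-mass case one cannot say that $(T(K),T_*\nu,\tilde Q)$ satisfies the sphere Bernstein--Markov property as defined in Section \ref{Sec-BM}. The paper instead re-inspects the proofs of Propositions \ref{weightedtd}, \ref{ZQ-inf} and Theorems \ref{rel-J-E}, \ref{rel-JE-S} and notes they survive with the restricted class (\ref{special-pol}); your proof needs the same remark to be complete. The remaining components of your argument (the energy identity $I^Q(\mu)=I^{\tilde Q}(T_*\mu)$ up to constants, the identification $\sigma_k=(T^{-1})_*\hat\sigma_k$, the contraction step, and goodness of $\mathcal I$) coincide with the paper's proof.
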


In section 11 we extend this result to the case of general $\beta$ (Theorem \ref{ldp-free}).   Our results include the LDP for a number of ensembles occurring in the literature (see Remark \ref{appldp}) and also the results of Hardy \cite{H} for Lebesgue measure in $\R$ or $\C$ (see the discussion after Theorem \ref{ldp-free}). The idea of using inverse stereographic projection and working in $\R^3$ to obtain an LDP for unbounded sets in $\C$ comes from this work.

\section{Logarithmic Potential Theory in $\R^n$}
Let $K\subset \R^n$ be compact and let $\mathcal M(K)$ be the set of probability measures on $K$ endowed with the topology of weak convergence from duality with continuous functions. We consider the logarithmic energy minimization problem:
$$\inf_{\mu\in \mathcal M(K)}I(\mu)$$
where
$$I(\mu):=\int_K \int_K \log \frac{1}{|x-y|}d\mu(x) d\mu(y)$$ 
is the logarithmic energy of $\mu$. We will say that $K$ is {\it log-polar} if $I(\mu)=\infty$ for all $\mu \in \mathcal M(K)$. It is known that any compact set of positive Hausdorff dimension is non log-polar \cite{Ca}. For a Borel set $E\subset \R^n$ we will say $E$ is log-polar if every compact subset of $E$ is log-polar. We write
$$U^{\mu}(x):= \int_K \log \frac{1}{|x-y|}d\mu(y)$$
for the logarithmic potential of $\mu$. It is locally integrable and superharmonic in all of $\R^{n}$.

We gather known results about logarithmic potentials in $\R^{n}$ in the next theorem.
\begin{theorem}\label{basics}
The following results, whose precise statements can be found in \cite{ST} for logarithmic potentials in $\C=\R^{2}$, hold true for logarithmic potentials in $\R^n$, $n\geq2$:
\begin{enumerate}[noitemsep,nolistsep]
\item
for $\mu=\mu_{1}-\mu_{2}$ a signed measure with compact support and total mass zero, with $\mu_{1}$ and $\mu_{2}$ of finite energies, $I(\mu)$ is nonnegative
and is zero if and only if $\mu_{1}=\mu_{2}$.
\item
principle of descent and lower envelope theorem (with ``q.e.'' in the latter replaced by ``off of a log-polar set'');
\item
maximum principle;
\item
continuity principle.
\end{enumerate}
\end{theorem}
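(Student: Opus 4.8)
The plan is to prove item 1 by a subordination argument that is insensitive to the ambient dimension, and then to observe that the arguments in \cite{ST} for items 2--4 use only ``soft'' properties of the kernel $(x,y)\mapsto\log\frac{1}{|x-y|}$ that survive the passage from $\R^2$ to $\R^n$, $n\ge 2$: lower semicontinuity, the bound $\log\frac{1}{|x-y|}\ge\log\frac{1}{\mathrm{diam}(K)}$ on a compact set, the behavior at infinity, weak-$*$ compactness of $\mathcal M(K)$, and, above all, superharmonicity. The one structural fact that needs to be recorded at the outset is that on $\R^n$ one has $\Delta(-\log|x|)=(2-n)|x|^{-2}\le 0$, so $x\mapsto\log\frac{1}{|x-y|}$ is superharmonic on all of $\R^n$ (harmonic off $\{y\}$ when $n=2$, strictly superharmonic when $n\ge 3$); consequently every logarithmic potential $U^\mu$ is lower semicontinuous and superharmonic on $\R^n$, exactly as in the planar theory.

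For item 1, start from the Frullani identity $\int_0^\infty t^{-1}(e^{-at}-e^{-bt})\,dt=\log(b/a)$, which gives, for $x\ne y$ in $\R^n$,
\[
\log\frac{1}{|x-y|}=\frac12\int_0^\infty\frac{e^{-t|x-y|^2}-e^{-t}}{t}\,dt.
\]
Let $\mu=\mu_1-\mu_2$ have compact support, total mass $\mu(\R^n)=0$, and $\mu_1,\mu_2$ of finite logarithmic energy. Insert this identity into $I(\mu)=\iint\log\frac{1}{|x-y|}\,d\mu(x)\,d\mu(y)$ and apply Fubini--Tonelli: splitting the $t$-integral at $t=1$ and using the finite-energy hypotheses controls the contribution near the diagonal where the kernel blows up. The constant term $\tfrac12 e^{-t}\mu(\R^n)^2$ then drops out, leaving
\[
I(\mu)=\frac12\int_0^\infty\frac{dt}{t}\iint e^{-t|x-y|^2}\,d\mu(x)\,d\mu(y).
\]
Each Gaussian kernel $e^{-t|x-y|^2}$ is positive definite, being up to a constant the Fourier transform of a Gaussian, so the inner integral is $\ge 0$ for every $t>0$; hence $I(\mu)\ge 0$. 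If $I(\mu)=0$, then $\iint e^{-t|x-y|^2}\,d\mu\,d\mu=0$ for a.e.\ $t>0$, hence for all $t>0$ by continuity in $t$, which via Plancherel forces $\widehat\mu\equiv 0$, i.e.\ $\mu_1=\mu_2$.

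For item 2, the principle of descent (if $\mu_k\to\mu$ weak-$*$ and $x_k\to x$ then $\liminf_k U^{\mu_k}(x_k)\ge U^\mu(x)$) follows as in \cite{ST}: replace the kernel by $\min\bigl(\log\tfrac{1}{|x-y|},M\bigr)$, a bounded continuous kernel whose potentials converge, and let $M\to\infty$ by monotone convergence. The lower envelope theorem is the \cite{ST} argument with ``q.e.'' systematically read as ``off a log-polar set''; the only points to verify are countable subadditivity of logarithmic capacity (so a countable union of log-polar sets is log-polar) and the fact that a log-polar set is null for every measure of finite logarithmic energy, both of which rest on item 1 and on superharmonicity of potentials. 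Item 3, the maximum principle (if $U^\mu\le M$ on $\supp\mu$ then $U^\mu\le M$ on $\R^n$), is deduced from superharmonicity and lower semicontinuity of $U^\mu$ together with the asymptotics $U^\mu(x)=-\mu(K)\log|x|+o(1)$ as $|x|\to\infty$, exactly as in the planar proof; and item 4, the continuity principle (continuity of $U^\mu|_{\supp\mu}$ at $x_0\in\supp\mu$ implies continuity of $U^\mu$ at $x_0$ on $\R^n$), then follows from item 2 and superharmonicity, again as in \cite{ST}.

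I expect item 1 to be the only genuinely non-formal step: the proof in \cite{ST} for $\C=\R^2$ leans on planar Fourier analysis (or on the explicit Cauchy/Green kernel) and does not transcribe, whereas the subordination identity above is precisely what makes the statement dimension-free. Within that step the delicate point is the Fubini justification near the diagonal, where $\log\frac{1}{|x-y|}$ is unbounded; this is where finiteness of the energies of $\mu_1$ and $\mu_2$ is used. Once item 1 is in place, the capacity-theoretic facts it supports reduce items 2--4 to routine transcriptions of the arguments in \cite{ST}; alternatively one could appeal to the standard treatment of the logarithmic kernel on $\R^n$ in the potential-theory literature, but we prefer to indicate these short self-contained arguments.
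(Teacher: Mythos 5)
Your treatment of item 1 is correct and is a genuinely different route from the paper, which simply cites \cite[Theorem 2.5]{CKL} (and \cite[Lemma 1.8]{ST} for the planar case). The Frullani/Gaussian subordination identity
$\log\frac{1}{|x-y|}=\frac12\int_0^\infty t^{-1}\bigl(e^{-t|x-y|^2}-e^{-t}\bigr)\,dt$,
positive definiteness of the Gaussian, and the Fourier argument for the equality case give a dimension-free, self-contained proof; your identification of the Fubini justification near the diagonal as the place where finiteness of $I(\mu_1)$, $I(\mu_2)$ enters is exactly right. For item 2 your plan (truncated kernels for the principle of descent, rereading ``q.e.'' as ``off a log-polar set'' in the lower envelope theorem) coincides with what the paper does, namely checking that the proofs of Theorems I.6.8--I.6.9 of \cite{ST} go through verbatim in $\R^n$.

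There is, however, a genuine gap in your items 3 and 4. For the maximum principle you propose to argue ``from superharmonicity and lower semicontinuity of $U^\mu$ together with the asymptotics at infinity, exactly as in the planar proof.'' This cannot work as stated when $n\ge 3$: there $\log\frac{1}{|x-y|}$ is strictly superharmonic off the diagonal, so $U^\mu$ is not harmonic on the complement of $\supp\mu$, and superharmonicity yields a \emph{minimum} principle, not the upper bound you need; moreover even in the plane Frostman's maximum principle is not a consequence of harmonicity off the support plus asymptotics, because the boundary $\limsup$ of $U^\mu$ from outside $\supp\mu$ is not controlled by its values on $\supp\mu$ without a further potential-theoretic argument. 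This is precisely why the paper does not transcribe a planar proof here but invokes the kernel-based results of Hayman--Kennedy (Theorems 5.1 and 5.2 of \cite{HaK}), which are proved for decreasing kernels in $\R^n$ and cover the logarithmic kernel. Similarly, the continuity principle does not ``follow from item 2''; its standard proof is the direct nearest-point/annulus estimate using only monotonicity of the kernel (and is dimension-free), or again the citation to \cite{HaK}. Your closing remark that one could instead appeal to the standard treatment of the logarithmic kernel in the literature is in fact the paper's proof of items 3 and 4, and is what you should do unless you supply those kernel-based arguments explicitly.
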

\begin{proof}
The version of Item 1. in $\C$ is \cite[Lemma 1.8]{ST}. In $\R^{n}$, it follows from \cite[Theorem 2.5]{CKL}. An extension of item 1. in case of unbounded support and whenever $I(\mu)$ is well-defined is given in \cite{M}, see Example 3.3.
One checks that the proofs of the principle of descent and lower envelope theorem in $\C$, Theorems I.6.8. and I.6.9. of \cite{ST}, are valid in $\R^{n}$. Items 3. and 4. are Theorems 5.2 and 5.1 of \cite{HaK}. A maximum principle restricted to the two dimensional sphere also follows as a particular case of \cite[Theorem 5]{BDS}.
\end{proof}

We will need to work in a weighted setting. We caution the reader that, unlike the setting of compact sets in $\R^n$ where we have a single notion of admissibility for a weight function, when we work on unbounded sets in $\C$ in the next section we will have several different notions.
\begin{definition}\label{realad}
Given a compact set $K\subset \R^n$ which is not log-polar, let $Q$ be a lower semicontinuous function on $K$ with $\{x\in K: Q(x) <\infty\}$ not log-polar. We call such $Q$ {\it admissible} and write $Q\in \AA(K)$. We define $w(x):=e^{-Q(x)}$.
\end{definition} 

\noindent We refer to either $Q$ or $w$ as the {\it weight}; in \cite{ST} this terminology is reserved for $w$.

We consider now the weighted logarithmic energy minimization problem:
$$\inf I^Q(\mu),\quad\mu\in \mathcal M(K),$$
where
$$I^Q(\mu):=\int_K \int_K \log \frac{1}{|x-y|w(x)w(y)}d\mu(x) d\mu(y)=I(\mu)+2\int_KQ(x)d\mu(x).$$ Following the arguments on pp. 27-33 in \cite{ST}, we have the following.

\begin{theorem}\label{frost}
For $K\subset \R^n$ compact and not log-polar, and $Q\in \AA(K)$, 
\begin{enumerate}[noitemsep,nolistsep]
\item $V_w:=\inf_{\mu\in \mathcal M(K)}I^Q(\mu)$ is finite;
\item there exists a unique weighted equilibrium measure $\mu_{K,Q} \in \mathcal M(K)$ with $I^Q(\mu_{K,Q})=V_w$ and the logarithmic energy $I(\mu_{K,Q})$ is finite;
\item the support $S_w:=$supp$(\mu_{K,Q})$ is contained in $\{x\in K: Q(x) <\infty\}$ and $S_w$ is not log-polar;
\item Let $F_w:=V_w -\int_K Q(x)d\mu_{K,Q}(x)$ denote the (finite) Robin constant. Then
\begin{align*}
U^{\mu_{K,Q}}(x)+Q(x) & \geq F_w\text{ on }K\setminus P\text{ where }P
\text{ is log-polar (possibly empty);}\\
U^{\mu_{K,Q}}(x)+Q(x) & \leq F_w\text{ for all }x\in S_w.
\end{align*}
\end{enumerate}
\end{theorem}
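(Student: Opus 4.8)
The plan is to follow the classical Frostman-type arguments from \cite[Chapter I, \S1]{ST}, checking at each step that the only properties of logarithmic potentials used are those collected in Theorem \ref{basics}, so that they transfer verbatim from $\C$ to $\R^n$. First I would establish finiteness of $V_w$ in Item 1.: the upper bound comes from evaluating $I^Q$ on any probability measure supported on a non-log-polar compact subset of $\{Q<\infty\}$ (such a measure of finite logarithmic energy exists, and $\int Q\,d\mu<\infty$ can be arranged by restricting to a sublevel set $\{Q\le M\}$, which is non-log-polar for $M$ large by lower semicontinuity); the lower bound follows from the fact that $U^\mu$ is bounded below on $K$ uniformly in $\mu\in\mathcal M(K)$ (since $K$ is compact and $\log\frac1{|x-y|}\ge\log\frac1{\mathrm{diam}(K)}$), together with lower semicontinuity and boundedness below of $Q$ on the compact set $K$.

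Next, for Items 2.--3., I would take a minimizing sequence $\mu_j$; by weak-* compactness of $\mathcal M(K)$ (Prohorov, $K$ compact) pass to a subsequential limit $\mu_{K,Q}$. Lower semicontinuity of $\mu\mapsto I^Q(\mu)$ in the weak-* topology — which is exactly the principle of descent (Item 2. of Theorem \ref{basics}) applied to the double integral, combined with lower semicontinuity of $\mu\mapsto\int Q\,d\mu$ for $Q$ l.s.c. and bounded below — gives $I^Q(\mu_{K,Q})\le V_w$, hence equality. Uniqueness is the standard convexity argument: if $\mu_1,\mu_2$ are two minimizers, then $\mu_1-\mu_2$ is a compactly supported signed measure of total mass zero, and both have finite logarithmic energy (finiteness of $I(\mu_{K,Q})$ follows since $I^Q(\mu_{K,Q})$ and $\int Q\,d\mu_{K,Q}$ are finite), so by Item 1. of Theorem \ref{basics}, $I(\mu_1-\mu_2)\ge0$ with equality iff $\mu_1=\mu_2$; expanding $I^Q(\tfrac12(\mu_1+\mu_2))$ via the parallelogram identity forces $I(\mu_1-\mu_2)=0$. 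That $S_w\subset\{Q<\infty\}$ is immediate since $\int Q\,d\mu_{K,Q}<\infty$; that $S_w$ is not log-polar follows because $I(\mu_{K,Q})<\infty$ rules out $\mu_{K,Q}$ charging a log-polar set.

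For Item 4., the Frostman variational inequalities, I would argue as in \cite{ST}: fix a compact $K'\subset S_w$ with $\mu_{K,Q}(K')>0$ on which the relevant potentials are well-controlled, and for any $\mu\in\mathcal M(K)$ of finite energy form the perturbation $\mu_t=(1-t)\mu_{K,Q}+t\mu$ for small $t>0$; minimality of $\mu_{K,Q}$ forces $\frac{d}{dt}\big|_{t=0^+}I^Q(\mu_t)\ge0$, which after expansion reads $\int(U^{\mu_{K,Q}}+Q)\,d\mu\ge\int(U^{\mu_{K,Q}}+Q)\,d\mu_{K,Q}=F_w$ for all such $\mu$. Taking $\mu$ to be suitable restrictions of $\mu_{K,Q}$ and of point-mass-approximating measures, and invoking the maximum principle (Item 3.) to propagate the inequality, gives the lower bound $U^{\mu_{K,Q}}+Q\ge F_w$ quasi-everywhere — here ``q.e.'' is replaced by ``off a log-polar set $P$'' as flagged in Theorem \ref{basics}, Item 2. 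The reverse inequality $U^{\mu_{K,Q}}+Q\le F_w$ on $S_w$ comes from integrating the lower bound against $\mu_{K,Q}$ (whose integral equals $F_w$) and using that a superharmonic-minus-l.s.c. function which is $\ge F_w$ a.e.-$\mu_{K,Q}$ and integrates to $F_w$ must equal $F_w$ $\mu_{K,Q}$-a.e., then upgrading ``$\mu_{K,Q}$-a.e. on $S_w$'' to ``everywhere on $S_w$'' via lower semicontinuity of $U^{\mu_{K,Q}}+Q$ from below and the fact that $U^{\mu_{K,Q}}$ restricted to $S_w$ is actually continuous where needed (continuity principle, Item 4.).

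The main obstacle is not any single deep step but the bookkeeping of exceptional sets: in $\C$ the classical statements use capacity/``q.e.'', and one must verify that in $\R^n$ every place where a set of zero logarithmic capacity is discarded, a log-polar set suffices, and that all four ingredients of Theorem \ref{basics} (descent, lower envelope, maximum, continuity principles) are genuinely available in the form needed — in particular that the lower envelope theorem's exceptional set is log-polar. Since Theorem \ref{basics} has already been established with precisely these caveats, the proof reduces to transcribing \cite[pp.~27--33]{ST} with ``q.e.''$\to$``off a log-polar set'', and I would simply note this rather than reproduce it in full.
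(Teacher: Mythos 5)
Your overall plan is exactly the paper's own proof: the paper gives no details beyond the statement that the arguments of \cite[pp.~27--33]{ST} carry over to $\R^n$ once ``q.e.'' is replaced by ``off a log-polar set'', the essential ingredient being that every non-log-polar compact set carries a probability measure of finite logarithmic energy, and your treatment of items 1, 2 and 4 implements precisely that transcription using the toolbox of Theorem \ref{basics}.

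There is, however, one step whose justification as written does not work: in item 3 you assert that $S_w\subset\{Q<\infty\}$ is ``immediate since $\int Q\,d\mu_{K,Q}<\infty$''. Finiteness of the integral only yields $Q<\infty$ $\mu_{K,Q}$-almost everywhere, and the support of a measure routinely contains points of measure zero, so a point $x_0\in S_w$ with $Q(x_0)=+\infty$ is not excluded by this reasoning. The correct deduction goes through your item 4: on $S_w$ one has $Q\le F_w-U^{\mu_{K,Q}}$, and $U^{\mu_{K,Q}}$ is bounded below on the compact set $K$ because the kernel $\log(1/|x-y|)$ is bounded below there; hence $Q<\infty$ at every point of $S_w$. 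Two smaller imprecisions in the same spirit: the non-log-polarity of a sublevel set $\{Q\le M\}$ in item 1 does not follow from lower semicontinuity alone --- rather, take a finite-energy measure on a compact non-log-polar subset of $\{Q<\infty\}$, note it gives positive mass to some $\{Q\le M\}$, and restrict and renormalize (the restriction still has finite energy); and in item 4 the first Frostman inequality needs neither the maximum principle nor point-mass approximations: the variational inequality applied to a finite-energy probability measure supported on the (closed, hence compact) set $\{U^{\mu_{K,Q}}+Q\le F_w-1/m\}$, on which $Q$ is automatically bounded above, already produces the contradiction, exactly as in \cite{ST}; the passage from ``off a log-polar set'' to ``$\mu_{K,Q}$-a.e.'' then uses that a measure of finite energy puts no mass on log-polar sets.
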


\begin{remark} In the proof of the Frostman-type property 4. in \cite{ST}, one simply replaces ``q.e.'' -- off of a set of positive logarithmic capacity in $\C$ -- by ``off of a log-polar set'' as the essential property used is the existence of a measure of finite logarithmic energy on a compact subset of a set of positive logarithmic capacity in $\C$. We should mention that, in the unweighted case, the existence portion of 2. and property 4. can be found in \cite{HaK}, Theorems 5.4 and 5.8.
\end{remark}

Next we discretize: for $k\geq 2$, let the $k$-th weighted diameter $\delta_{k}^{Q}(K)$ be defined by
$$\delta_k^Q(K):=\sup_{x_1,...,x_k\in K} 
|VDM_k^Q(x_1,...,x_k)|^{2/k(k-1)},$$
where $|VDM_k^Q(x_1,...,x_k)|$ denotes the weighted Vandermonde:
\begin{align}\notag
|VDM_k^Q(x_1,...,x_k)| & :=\prod_{i<j} |x_i-x_j|w(x_i)w(x_j)= \prod_{i<j} |x_i-x_j|\prod_{j=1}^kw(x_j)^{k-1}\\\label{VDM-Q}
& =:|VDM_k(x_1,...,x_k)|\cdot \prod_{j=1}^kw(x_j)^{k-1}.
\end{align}
By the uppersemicontinuity of $(x_1,...,x_k)\to \prod_{i<j} |x_i-x_j|w(x_i)w(x_j)$ on $K^k$ the supremum is attained; we call any collection of $k$ points of K at which the maximum is
attained {\it weighted Fekete points} of order $k$ for $K,Q$. Following the proofs of Propositions 3.1--3.3 of \cite[Section 3]{VELD} we may derive similar results in $\R^{n}$.

\begin{theorem} \label{sec3} Given $K\subset \R^n$ compact and not log-polar, and $Q\in \AA(K)$,
\begin{enumerate}
\item if $\{\mu_k=\frac{1}{k}\sum_{j=1}^k\delta_{x_j^{(k)}}\}\subset \mathcal M(K)$ converge weakly to $\mu\in \mathcal M(K)$, then 
\begin{equation}\label{upboundVDM}
\limsup_{k\to \infty} |VDM_k^Q(x_1^{(k)},...,x_k^{(k)})|^{2/k(k-1)}\leq \exp{(-I^Q(\mu))};
\end{equation}
\item we have
$$\delta^Q(K):=\lim_{k\to \infty} \delta_k^Q(K)=\exp{(-V_w)};$$
\item if $\{x_j^{(k)}\}_{j=1,...,k; \ k=2,3,...}\subset K$ and 
$$\lim_{k\to \infty} |VDM_k^Q(x_1^{(k)},...,x_k^{(k)})|^{2/k(k-1)}= \exp{(-V_w)}$$
then 
$$\mu_k=\frac{1}{k}\sum_{j=1}^k\delta_{x_j^{(k)}}\to \mu_{K,Q} \ \hbox{weakly}.$$

\end{enumerate}

\end{theorem}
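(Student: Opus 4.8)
The plan is to transcribe the classical potential-theoretic scheme (Propositions 3.1--3.3 of \cite{VELD}, and the corresponding development in \cite{ST}) to $\R^n$; the only input beyond soft facts about weak convergence of measures is that $\log\frac{1}{|x-y|}$ is lower semicontinuous and bounded below on the compact product $K\times K$, which is recorded in Theorem \ref{basics}. Throughout, write $k^Q(x,y):=\log\frac{1}{|x-y|w(x)w(y)}=\log\frac{1}{|x-y|}+Q(x)+Q(y)$; since $Q$ is lower semicontinuous on the compact set $K$ it attains a finite minimum, so $k^Q$ is lower semicontinuous and bounded below on $K\times K$, and by \eqref{VDM-Q} we have $\log|VDM_k^Q(x_1,\dots,x_k)|=-\sum_{i<j}k^Q(x_i,x_j)$.

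For Item 1 I would run the principle of descent. Assume $\mu_k=\frac1k\sum_{j=1}^k\delta_{x_j^{(k)}}\to\mu$ weakly in $\mathcal M(K)$. For a truncation level $M>0$ set $k^Q_M:=\min(k^Q,M)$, a bounded lower semicontinuous function on $K\times K$. Separating the diagonal in the double sum and using $k^Q\ge k^Q_M$,
$$\frac{1}{k(k-1)}\sum_{i\neq j}k^Q(x_i^{(k)},x_j^{(k)})\ \ge\ \frac{k}{k-1}\Big(\iint k^Q_M\,d\mu_k\,d\mu_k-\frac Mk\Big).$$
Because $\mu_k\otimes\mu_k\to\mu\otimes\mu$ weakly on $K\times K$ and $k^Q_M$ is bounded and lower semicontinuous, the portmanteau theorem gives $\liminf_k\iint k^Q_M\,d\mu_k\,d\mu_k\ge\iint k^Q_M\,d\mu\,d\mu$; letting $k\to\infty$ and then $M\to\infty$ (monotone convergence, $k^Q_M\uparrow k^Q$) yields $\liminf_k\frac{1}{k(k-1)}\sum_{i\neq j}k^Q(x_i^{(k)},x_j^{(k)})\ge I^Q(\mu)$, which is precisely \eqref{upboundVDM}.

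For Item 2, the lower bound comes from using $\mu_{K,Q}$ of Theorem \ref{frost} as a test measure; by that theorem it has finite logarithmic energy $I(\mu_{K,Q})$ (hence no atoms) and $\int_KQ\,d\mu_{K,Q}$ is finite, so $\log|VDM_k^Q|\in L^1(\mu_{K,Q}^{\otimes k})$. For every $k$,
$$\delta_k^Q(K)^{k(k-1)/2}=\sup_{K^k}|VDM_k^Q|\ \ge\ \int_{K^k}|VDM_k^Q|\,d\mu_{K,Q}^{\otimes k}\ \ge\ \exp\!\Big(\int_{K^k}\log|VDM_k^Q|\,d\mu_{K,Q}^{\otimes k}\Big),$$
the last inequality being Jensen's inequality for the convex function $\exp$. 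Each of the $\binom k2$ pair-integrals in the exponent equals $-I(\mu_{K,Q})-2\int_KQ\,d\mu_{K,Q}=-I^Q(\mu_{K,Q})=-V_w$, whence $\delta_k^Q(K)\ge e^{-V_w}$ for all $k$. For the matching upper bound, take weighted Fekete points of order $k$, extract from the compact space $\mathcal M(K)$ a weakly convergent subsequence of the corresponding empirical measures, and apply \eqref{upboundVDM} together with $I^Q\ge V_w$; since the bound holds along every subsequence, $\limsup_k\delta_k^Q(K)\le e^{-V_w}$. Combining the two bounds gives $\delta^Q(K)=e^{-V_w}$.

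Item 3 is then immediate: if $\mu_k=\frac1k\sum_{j=1}^k\delta_{x_j^{(k)}}\to\mu$ along a subsequence and $|VDM_k^Q(x_1^{(k)},\dots,x_k^{(k)})|^{2/k(k-1)}\to e^{-V_w}$, then \eqref{upboundVDM} forces $e^{-V_w}\le e^{-I^Q(\mu)}$, i.e. $I^Q(\mu)\le V_w$; since $V_w$ is the minimum of $I^Q$, equality holds, and the uniqueness of the minimizer in Theorem \ref{frost}(2) gives $\mu=\mu_{K,Q}$. As every subsequence of $\{\mu_k\}$ has a further subsequence converging, necessarily to $\mu_{K,Q}$, we conclude $\mu_k\to\mu_{K,Q}$ weakly. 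The only step that needs genuine care is Item 1 --- correctly isolating the diagonal of the double sum and verifying weak lower semicontinuity of the truncated kernel $k^Q_M$ on the product space before sending $M\to\infty$; the rest is bookkeeping, and nothing is sensitive to the dimension $n$ beyond the lower semicontinuity and lower boundedness of $\log\frac{1}{|x-y|}$ on compacts, already established in Theorem \ref{basics}.
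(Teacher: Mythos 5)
Your argument is correct and is essentially the proof the paper has in mind: the paper simply defers to Propositions 3.1--3.3 of \cite{VELD}, whose scheme is exactly what you carry out (truncated lower semicontinuous kernel plus weak convergence of $\mu_k\otimes\mu_k$ for \eqref{upboundVDM}, the equilibrium measure as a test measure together with Fekete points for the transfinite diameter, and uniqueness of the minimizer plus compactness of $\mathcal M(K)$ for item 3). The only ingredients beyond $\C$ are the lower semicontinuity and lower boundedness of the kernel on $K\times K$ and Theorem \ref{frost}, which you invoke correctly, so nothing dimension-specific is missing.
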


\section{Weighted potential theory on unbounded sets in $\C$}
We use the previous results in $\R^{3}$ and the inverse stereographic projection from the two-dimensional sphere to $\C$ to extend classical results concerning potential theory on compact subsets of $\C$ to unbounded closed sets with weakly admissible weights. Some of these results already appeared in the literature, see, e.g., \cite{HK,Sim}.


Thus let $K\subset \C$ be closed and unbounded. We consider three types of admissibility for weight functions on $K$. 
\begin{definition} \label{admit} Let $Q$ be a lower semicontinuous function on $K$ with $\{z\in K: Q(z)< \infty\}$ a nonpolar subset of $\C$ (equivalently a non log-polar subset of $\R^2$). We say $Q$ is 
\begin{enumerate}
\item {\sl weakly admissible} if  there exists $M\in(-\infty,\infty)$ such that
\begin{equation}\label{weak-adm}
\liminf_{z\in K, \ |z|\to \infty}\bigl(Q(z)-\log |z|\bigr) =M.
\end{equation}
\item {\sl admissible} if $\liminf_{z\in K, \ |z|\to +\infty}\bigl(Q(z)-\log |z|\bigr) = +\infty$.
\item {\sl strongly admissible} if for some $\epsilon >0$, there exists $R> 0$ with $Q(z) > (1+\epsilon)\log |z|$ for $z\in K$ and $|z|>R$.
\end{enumerate}
\end{definition}

\noindent Examples of weakly admissible weights arise from logarithmic potentials: if  $\mu$ is a probability measure on $\C$ such that 
$U^{\mu}$ is continuous, then $Q=-U^{\mu}$ is weakly admissible on $K=\C$. 

We assume now that $Q$ is weakly admissible. We consider the inverse stereographic projection $T:\C \cup \{\infty\}\to {\bf S}$ where ${\bf S}$ is the sphere in $\R^3$ centered in $(0,0,1/2)$ of radius $1/2$. It is defined by
\begin{equation} \label{stereo} T(z)=\left(\frac{\Re(z)}{1+|z|^{2}},\frac{\Im(z)}{1+|z|^{2}},\frac{|z|^{2}}{1+|z|^{2}}\right),\quad z\in\C\end{equation}
and $T(\infty)=P_0,$ where $P_0=(0,0,1)$ denotes the ``north pole'' of ${\bf S}$.
The map $T$ is a homeomorphism with
\begin{equation}\label{rel-dist}
|T(z)-T(u)|=\frac{|z-u|}{\sqrt{1+|z|^{2}}\sqrt{1+|u|^{2}}},\quad z,u\in\C,
\end{equation}
where $|\cdot|$ denotes the Euclidean distance. 

For $\nu$ a positive Borel measure supported on $K$, not necessarily finite, we denote by $T_{*}\nu$ its push-forward by $T$, that is, the measure on $T(K)$ such that
$$\int_{T(K)}f(x)dT_{*}\nu(x)=\int_{K}f(T(z))d\nu(z),$$
for any Borel function $f$ on $T(K)$.
Lemma 2.1 in \cite{H} shows that the map 
$$T_*:~\MM(K)\to\MM(T(K)),$$
is a homeomorphism from ${\mathcal  M}(K)$ to the subset of ${\mathcal  M}(T(K))$ of measures which put no mass at the north pole $P_0$ of ${\bf S}$. 
Here, $\MM(K)$ and $\MM(T(K))$ are endowed with the topology of weak convergence. This is the topology coming from duality with bounded, continuous functions. On $K$, it suffices to consider bounded, continuous functions $f:K\to \C$ such that $\lim_{|z|\to \infty} f(z)$ exists. This follows from the correspondence of ${\mathcal  M}(K)$ with the measures in ${\mathcal  M}(T(K))$ putting no mass at $P_0$.

When the support of a measure $\mu\in\MM(K)$ is unbounded, its potential
$$U^{\mu}(z)=\int\log\frac{1}{|z-t|}d\mu(t),\qquad z\in\C$$
is not always well-defined. However, the following lemma
holds true.
\begin{lemma}\label{pot-unbdd}
If there exists a $z_{0}\in\C$ with $U^{\mu}(z_{0})>-\infty$ then
\begin{equation}\label{cond-log-mu}
\int\log(1+|t|)d\mu(t)<\infty,
\end{equation}
which implies that $U^{\mu}(z)$ is well-defined as a function on $\C$ with values in $(-\infty,\infty]$. Moreover, $U^{\mu}(z)$ is then superharmonic and
$$-U^{\mu}(z)\leq\log(1+|z|)+\int\log(1+|t|)d\mu(t).$$
Also,
\begin{equation}\label{equiv-log-ener}
\int\log(1+|t|)d\mu(t)<\infty\quad\Longleftrightarrow\quad -\infty<I(\mu).
\end{equation}
\end{lemma}
\begin{proof}
If $1+2|z_{0}|\leq|t|$ then $1+|t|\leq2(|t|-|z_{0}|)\leq2|t-z_{0}|$, hence
\begin{equation}\label{energy2}\int_{1+2|z_{0}|\leq |t|}\log(1+|t|)d\mu(t)
\leq\log2+\int_{1+2|z_{0}|\leq |t|}\log|z_{0}-t|d\mu(t)<\infty.\end{equation}
Conversely, if (\ref{cond-log-mu}) holds then $-\infty<U^{\mu}(z)$ and $-\infty<I(\mu)$ since $|z-t|\leq
(1+|z|)(1+|t|)$.
Under assumption (\ref{cond-log-mu}), the potential $U^{\mu}(z)$ is superharmonic. This follows e.g. from the fact that 
$$U^{\mu}(z)=\int\log\frac{1+|t|}{|z-t|}d\mu(t)-\int\log(1+|t|)d\mu(t),$$
and the first integral on the right-hand side is superharmonic with respect to $z$, see \cite[Theorem 2.4.8]{R}.
The direct implication in (\ref{equiv-log-ener}) was noted above.
Conversely, if $-\infty<I(\mu)$ then $U^{\mu}(z)$ cannot be constant, equal to $-\infty$, for all $z$, so the inequality on the left of (\ref{equiv-log-ener}), which is (\ref{cond-log-mu}), follows from (\ref{energy2}).
\end{proof}
Logarithmic potentials on $\C$ and on the sphere ${\bf S}$ correspond by the relation
\begin{equation}\label{pot-C-S}
U^{\mu}(z)=U^{T_{*}\mu}(Tz)-\frac12\log(1+|z|^{2})-\frac12\int\log(1+|t|^{2})d\mu(t),\qquad z\in\C.
\end{equation}
The weighted logarithmic energy of a measure $\mu\in\MM(K)$ is defined as
\begin{equation}\label{enerK}
I^Q(\mu)=\int_{K}\int_{K}\log\frac{1}{|z-t|w(z)w(t)}d\mu(z)d\mu(t)= I(\mu) +2\int_KQd\mu,
\end{equation}
where $w=e^{-Q}$. The double integral is always well-defined. Indeed it follows from the upper semicontinuity of $w$ and (\ref{weak-adm}) that the integrand is bounded below. On the contrary, the second expression has a meaning only if $I(\mu)>-\infty$ which is not necessarily true. Another equivalent way to define $I^{Q}(\mu)$, which is always valid, is by using the map $T$ as was done in \cite{HK}. Here, one identifies  $I^{Q}(\mu)$ with the weighted logarithmic energy of the measure $T_{*}\mu\in\MM(T(K))$, 
\begin{equation}\label{enerTK}
I^{\tilde Q}(T_* \mu)= \int_{T(K)}\int_{T(K)}\log {1\over |x-y|}dT_* \mu(x) d T_* \mu(y) + 2\int_{T(K)} \tilde Q(x)dT_*\mu(x), 
\end{equation}
where
\begin{equation}\label{tildeq} \tilde Q(T(z))=Q(z) -{1\over 2} \log (1+|z|^2).\end{equation}
To define $\tilde Q$ on the whole of $T(K)$ we set $\tilde Q(P_0)=M$, so that $\tilde Q$ becomes lower semicontinuous, and we get a correspondence between {\it weakly admissible weights on the closed set $K$ in $\C$} and {\it admissible weights on the compact set $T(K)\subset {\bf S}$ in $\R^{3}$}.
\begin{lemma}
A closed subset $K\subset\C$ is polar if and only if $T(K)\subset {\bf S}$ is log-polar.
\end{lemma}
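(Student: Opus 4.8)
The plan is to reduce everything to compact sets and then transport logarithmic energies between $\C$ and ${\bf S}$ via the explicit distance formula (\ref{rel-dist}). Recall that, by definition, $K$ is polar (equivalently log-polar as a subset of $\R^2$) if and only if every compact subset of $K$ is polar, and $T(K)$ is log-polar if and only if every compact subset of $T(K)$ is log-polar. Since $\infty\notin K$ we have $P_0\notin T(K)$, and $T$ restricts to a homeomorphism of $\C$ onto ${\bf S}\setminus\{P_0\}$ with continuous inverse; hence the compact subsets of $T(K)$ are exactly the sets $T(E)$ with $E$ a compact subset of $K$ (if $F\subset T(K)$ is compact then $F$ avoids $P_0$, so $E:=T^{-1}(F)$ is a compact subset of $\C$ contained in $K$). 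So it suffices to prove: for compact $E\subset\C$, $E$ is polar if and only if $T(E)$ is log-polar.

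Next I would fix such an $E$, set $R=\max_{z\in E}|z|$, and relate the energies of $\mu\in\MM(E)$ and of its push-forward $T_*\mu\in\MM(T(E))$. Taking logarithms of reciprocals in (\ref{rel-dist}) and integrating $d\mu(z)\,d\mu(u)$ — legitimate since $\log\frac{1}{|z-u|}$ is bounded below on $E\times E$ and the two correction terms $\tfrac12\log(1+|z|^2)$, $\tfrac12\log(1+|u|^2)$ are bounded — one obtains
$$I(T_*\mu)=I(\mu)+\int_E\log(1+|z|^2)\,d\mu(z),$$
where the correction integral lies in $[0,\log(1+R^2)]$ and is therefore finite. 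Hence $I(\mu)=+\infty$ if and only if $I(T_*\mu)=+\infty$. (Equivalently, one may integrate the potential identity (\ref{pot-C-S}) against $\mu$.)

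Finally, since $T$ is a homeomorphism of $E$ onto $T(E)$, the map $\mu\mapsto T_*\mu$ is a bijection of $\MM(E)$ onto $\MM(T(E))$; combining this with the energy identity I would conclude that $E$ is polar $\iff$ $I(\mu)=+\infty$ for every $\mu\in\MM(E)$ $\iff$ $I(T_*\mu)=+\infty$ for every $\mu\in\MM(E)$ $\iff$ $I(\nu)=+\infty$ for every $\nu\in\MM(T(E))$ $\iff$ $T(E)$ is log-polar. Quantifying over all compact $E\subset K$ and invoking the correspondence of the first paragraph finishes the proof. I do not expect a genuine obstacle here; the only points needing care are keeping the argument on compact sets, so that the correction term $\int_E\log(1+|z|^2)\,d\mu$ stays finite, and observing that compact subsets of $T(K)$ stay away from $P_0$ and so really do come from compact subsets of $K\subset\C$.
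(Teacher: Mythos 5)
Your core mechanism -- pushing measures forward under $T$, using (\ref{rel-dist}) to get $I(T_*\mu)=I(\mu)+\int\log(1+|z|^2)\,d\mu$ for compactly supported probability measures, and testing (log-)polarity on compact subsets -- is exactly the paper's proof. The one place where you diverge, and where there is a genuine (though easily repaired) gap, is the claim that $P_0\notin T(K)$, so that every compact $F\subset T(K)$ is of the form $T(E)$ with $E\subset K$ compact. In this paper $T(K)$ is understood as the \emph{compact} subset of ${\bf S}$ which contains $P_0=T(\infty)$ when $K$ is unbounded: this is why $\tilde Q$ is extended to $P_0$ by setting $\tilde Q(P_0)=M$, and why $T(K)$ can play the role of the compact set in Theorem \ref{frost}. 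With that convention a compact $F\subset T(K)$ may contain $P_0$, $T^{-1}(F)$ is then unbounded, and your correspondence between compact subsets breaks down precisely in the direction ``$K$ polar $\Rightarrow$ $T(K)$ log-polar''.

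The repair is the one sentence your write-up omits and the paper's proof includes. Since ${\bf S}$ has diameter $1$, the kernel $\log\frac{1}{|x-y|}$ is nonnegative there, so any measure charging $P_0$ has infinite energy. Hence if some $\mu\in\MM(F)$ had finite energy, it would not charge $P_0$, and for a suitable $m$ its restriction to $F_m:=F\cap\{x\in{\bf S}:|x-P_0|\geq 1/m\}$ would have positive mass and (again by nonnegativity of the kernel) finite energy; normalizing produces a finite-energy probability measure on the compact set $F_m\subset T(K)\setminus\{P_0\}$, hence, via your energy identity, a finite-energy measure on the compact set $T^{-1}(F_m)\subset K$, contradicting the polarity of $K$. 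With this observation inserted your argument is complete and coincides with the paper's; the converse direction ``$K$ nonpolar $\Rightarrow$ $T(K)$ not log-polar'' is unaffected, since adjoining the single point $P_0$ cannot destroy non-log-polarity.
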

\begin{proof}
We have
$$\int_{T(K)}\int_{T(K)}\log {1\over |x-y|}dT_* \mu(x) d T_* \mu(y)=
\int_{K}\int_K\log\frac{1}{|z-t|}d\mu(z)d\mu(t)+\int_{K}\log(1+|t|^{2})d\mu(t).$$
Recall that a closed subset $K\subset\C$ is polar if $K_{r}=K\cap B(0,r)$ is polar for all $r>0$. Thus, if $K$ is nonpolar, there exists $r>0$ with $K_{r}$ nonpolar, that is, there is a measure $\mu_{r}$ supported on $K_{r}$ of finite energy. By the above equality, $T_{*}\mu_{r}$ is a measure on $T(K_{r})\subset T(K)$ of finite energy, so $T(K)$ is not log-polar.  Conversely, if $K\subset\C$ is polar, for any finite measure $\mu$ of compact support in $K$ we have $I(\mu)=\infty$ in $\C$ (cf. \cite{R}) and thus $I(T_{*}\mu)=\infty$ in ${\bf S}$. Since any measure on $T(K)$ charging the north pole $P_{0}$ has infinite energy, it follows that $T(K)$ is log-polar. 
\end{proof}
Theorem \ref{frost} asserts the existence and uniqueness of a weighted energy minimizing measure on a non log-polar compact subset of $\R^{n}$ with an admissible weight. Obviously, this minimizing measure does not charge any point of the set, in particular the north pole $P_0$ if it belongs to the set. Hence the above correspondence implies the following.
\begin{theorem} \label{Frost-C}
Let $K$ be a nonpolar closed subset of $\C$ and $Q$ a weakly admissible weight on $K$. Then,
\begin{enumerate}[noitemsep,nolistsep]
\item $V_w:=\inf_{\mu\in \mathcal M(K)}I^Q(\mu)$ is finite;
\item there exists a unique weighted equilibrium measure $\mu_{K,Q} \in \mathcal M(K)$ with $I^Q(\mu_{K,Q})=V_w$ and the logarithmic energy $I(\mu_{K,Q})$ is finite (hence $-\Delta U^{\mu_{K,Q}}=2\pi\mu_{K,Q}$);
\item the support $S_w:=$supp$(\mu_{K,Q})$ is contained in $\{x\in K: Q(z) <\infty\}$ and $S_w$ is not polar;
\item Let $F_w:=V_w -\int_K Q(z)d\mu_{K,Q}(z)$ denote the (finite) Robin constant. Then
\begin{align*}
U^{\mu_{K,Q}}(z)+Q(z) & \geq F_w\text{ on }K\setminus P\text{ where }P
\text{ is polar (possibly empty);}\\
U^{\mu_{K,Q}}(z)+Q(z) & \leq F_w\text{ for all }z\in S_w.
\end{align*}
\end{enumerate}
\end{theorem}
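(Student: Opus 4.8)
The plan is to transfer the statement from the compact setting of Theorem~\ref{frost} in $\R^{3}$ via the inverse stereographic projection, exactly as the construction of $\tilde Q$ is designed to permit. Put $\widehat K:=T(K)\cup\{P_{0}\}$; since $K\cup\{\infty\}$ is closed in the Riemann sphere and $T$ is a homeomorphism, $\widehat K$ is a compact subset of $\mathbf S\subset\R^{3}$. First I would check that $\widehat K$ is not log-polar and that $\tilde Q\in\AA(\widehat K)$: lower semicontinuity of $\tilde Q$ on $\widehat K$, including the value $\tilde Q(P_{0})=M$, was already arranged just before the theorem, and both $\widehat K$ and $\{\tilde Q<\infty\}$ contain $T(E_{0})$ for a nonpolar compact $E_{0}\subset\{z\in K:Q(z)<\infty\}$ (which exists by inner regularity of the capacity), where $T(E_{0})$ is non-log-polar by the polar/log-polar correspondence of the preceding Lemma applied to the closed set $E_{0}$. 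Theorem~\ref{frost} then applies to $(\widehat K,\tilde Q)$ and yields a finite $\tilde V:=\inf_{\sigma\in\MM(\widehat K)}I^{\tilde Q}(\sigma)$, a unique minimizer $\sigma_{0}:=\mu_{\widehat K,\tilde Q}$ with $I(\sigma_{0})<\infty$, a support $\supp\sigma_{0}\subset\{\tilde Q<\infty\}$ that is non-log-polar, and a finite Robin constant $\tilde F=\tilde V-\int\tilde Q\,d\sigma_{0}$ satisfying the two Frostman inequalities on $\widehat K$.

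The key observation is that $\sigma_{0}$ puts no mass at $P_{0}$: a measure with an atom has infinite logarithmic energy (its potential is $+\infty$ at the atom), whereas $I(\sigma_{0})<\infty$. By the homeomorphism $T_{*}$ we may therefore write $\sigma_{0}=T_{*}\mu_{0}$ for a unique $\mu_{0}\in\MM(K)$, and we set $\mu_{K,Q}:=\mu_{0}$. Since the minimizer on $\widehat K$ does not charge $P_{0}$, minimizing $I^{\tilde Q}$ over $\MM(\widehat K)$ is the same as minimizing it over $T_{*}\MM(K)$, so the identity $I^{Q}(\mu)=I^{\tilde Q}(T_{*}\mu)$ built into (\ref{enerK})--(\ref{enerTK}) gives $V_{w}=\tilde V<\infty$ and exhibits $\mu_{K,Q}$ as the unique minimizer of $I^{Q}$ on $\MM(K)$; this is items 1 and 2 except for the finiteness of $I(\mu_{K,Q})$. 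For that, taking $z\to\infty$ in (\ref{rel-dist}) gives $|P_{0}-T(t)|=(1+|t|^{2})^{-1/2}$, hence $\tfrac12\int\log(1+|t|^{2})\,d\mu_{0}(t)=U^{\sigma_{0}}(P_{0})$; the Frostman upper bound together with $\tilde Q\ge\inf_{\widehat K}\tilde Q>-\infty$ gives $U^{\sigma_{0}}\le\tilde F-\inf_{\widehat K}\tilde Q$ on $\supp\sigma_{0}$, and the maximum principle (Theorem~\ref{basics}, item~3) extends this bound to all of $\R^{3}$, so $U^{\sigma_{0}}(P_{0})<\infty$ and $\int\log(1+|t|^{2})\,d\mu_{0}<\infty$. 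Since $I(\sigma_{0})=I(\mu_{0})+\int\log(1+|z|^{2})\,d\mu_{0}(z)$ (the identity used in the proof of the Lemma) and both terms on the right are now controlled, $I(\mu_{K,Q})$ is finite, and $-\Delta U^{\mu_{K,Q}}=2\pi\mu_{K,Q}$ follows from the Riesz decomposition of the superharmonic potential of Lemma~\ref{pot-unbdd}.

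It remains to translate items 3 and 4. For item 3, $S_{w}=T^{-1}(\supp\sigma_{0}\setminus\{P_{0}\})$ is closed in $\C$, contained in $\{Q<\infty\}$ because $\supp\sigma_{0}\subset\{\tilde Q<\infty\}$, and nonpolar by the Lemma, since deleting the single point $P_{0}$ from the non-log-polar set $\supp\sigma_{0}$ leaves a non-log-polar set. For item 4, substitute $x=Tz$ into $U^{\sigma_{0}}(x)+\tilde Q(x)\ge\tilde F$ (off a log-polar subset of $\widehat K$) and $U^{\sigma_{0}}(x)+\tilde Q(x)\le\tilde F$ (on $\supp\sigma_{0}$): using the potential relation (\ref{pot-C-S}) and $\tilde Q(Tz)=Q(z)-\tfrac12\log(1+|z|^{2})$, the quantity $U^{\sigma_{0}}(Tz)+\tilde Q(Tz)$ becomes $U^{\mu_{K,Q}}(z)+Q(z)+\tfrac12\int\log(1+|t|^{2})\,d\mu_{K,Q}(t)$, so the inequalities read $U^{\mu_{K,Q}}(z)+Q(z)\ge F_{w}$ off a polar set and $\le F_{w}$ on $S_{w}$, with $F_{w}:=\tilde F-\tfrac12\int\log(1+|t|^{2})\,d\mu_{K,Q}(t)$; a short computation using $\int Q\,d\mu_{K,Q}=\int\tilde Q\,d\sigma_{0}+\tfrac12\int\log(1+|t|^{2})\,d\mu_{K,Q}$ checks $F_{w}=V_{w}-\int Q\,d\mu_{K,Q}$, and the exceptional log-polar set on $\mathbf S$ pulls back to a polar subset of $\C$, again by the Lemma. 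The only real friction I anticipate is bookkeeping --- making sure the polar/log-polar correspondence is invoked only for honestly closed sets and that all the constants line up --- rather than any substantive obstacle, since the argument is essentially a push-forward of results already established.
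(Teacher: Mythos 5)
Your proposal is correct and follows essentially the same route as the paper: transfer to the compact set $T(K)\cup\{P_0\}$ on the sphere with the admissible weight $\tilde Q$, apply Theorem \ref{frost} there, note that the minimizer cannot charge $P_0$ since it has finite energy, and pull all four assertions back through $T_*$ using the energy and potential correspondences. The only cosmetic difference is your verification that $\int\log(1+|t|^2)\,d\mu_{K,Q}<\infty$ by bounding $U^{\sigma_0}(P_0)$ via the Frostman inequality and the maximum principle on the sphere, where the paper instead invokes Lemma \ref{pot-unbdd}; both arguments are valid.
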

\begin{proof}
The above assertions correspond by the map $T$ to the similar assertions from Theorem \ref{frost} applied with a non log-polar compact subset of the sphere ${\bf S}$. Note that
$$T_{*}\mu_{K,Q}=\mu_{T(K),\tilde Q},\qquad V_{w}=V_{\tilde w}, \qquad F_{w}=F_{\tilde w}-
\frac12\int_{K}\log(1+|t|^{2})d\mu_{K,Q}(t),$$
where we have set $\tilde w:=e^{-\tilde Q}$.
The fact that $I(\mu_{K,Q})<\infty$ follows from 
$$I(\mu_{K,Q})=I^{Q}(\mu_{K,Q})-2\int_{K}Qd\mu_{K,Q}
=I^{\tilde Q}(T_{*}\mu_{K,Q})-2\int_{K}Qd\mu_{K,Q},$$
where we know that $I^{\tilde Q}(T_{*}\mu_{K,Q})=V_{\tilde w}$ is finite and $Q$ is bounded below. If $S_{w}$ is compact, it is clear that the other inequality $-\infty<I(\mu_{K,Q})$ is satisfied. If $S_{w}$ is not compact, 
we may use
$$\iint\log|z-t|d\mu_{K,Q}(z)d\mu_{K,Q}(t)\leq2\int\log(1+|t|)d\mu_{K,Q}(t),$$
so to verify $-\infty<I(\mu_{K,Q})$ it suffices to show that 
$$\int_{K}\log(1+|t|)d\mu_{K,Q}(t)<\infty,$$
which holds true by Lemma \ref{pot-unbdd} since the equilibrium potential satisfies $U^{\mu_{K,Q}}>-\infty$. 
\end{proof}
In particular, if $\mu$ is a probability measure on $\C$ such that 
$U^{\mu}$ is continuous, taking $Q=-U^{\mu}$ on $K=\C$ we have $\mu=\mu_{K,Q}$ so that, in general, $\mu_{K,Q}$ need not have compact support. As specific examples, if $K =\C$ and $Q(z)=\frac{1}{2}\log (1+|z|^2)$, then $d\mu_{K,Q}=
\pi^{-1}(1+|z|^2)^{-2}dm(z)$ where $dm$ is Lebesgue measure, cf., Example 1.4 of \cite{H}. If $K=\R$ and $Q(x)=\frac{1}{2}\log (1+x^2)$, then $d\mu_{K,Q}=\pi^{-1}(1+x^2)^{-1}dx$,  cf., Example 1.3 of \cite{H}. We mention that in \cite{HK}, existence and uniqueness of a minimizing measure was proven in the more general context of weakly admissible vector equilibrium problems.

Let $L(\C)$ be the set of all subharmonic functions $u$ on $\C$ with the property that $$u(z)-\log |z|\text{ is bounded above as }|z| \to \infty.$$ 
We will need the following version of the domination principle, see \cite[Corollary A.2]{BL}.
\begin{proposition}
Let $u,v\in L(\C)$ with $u(z)-v(z)$ bounded above as $|z|\to \infty$ and suppose $I(\Delta v)<\infty$. If $u\leq v$ a.e.-$\Delta v$, then 
$u\leq v$ on $\C$.
\end{proposition}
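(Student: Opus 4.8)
The plan is to reduce the statement to the classical domination principle for logarithmic potentials of compactly supported measures of finite energy, which holds in $\C$ and is available through the potential theory recalled in Section 2. The obstacle is that $u$ and $v$ are subharmonic on all of $\C$ but $\Delta v = \mu$ (writing $\mu := \frac{1}{2\pi}\Delta v$, a probability measure up to normalization) need not be compactly supported; the hypothesis $u \le v$ a.e.-$\Delta v$ must be leveraged globally, and the growth conditions $u(z) - \log|z|$, $v(z) - \log|z|$ bounded above — together with $u - v$ bounded above — are what control the behavior at infinity. So the first step is to record that $I(\mu) < \infty$ forces $\int \log(1+|t|)\,d\mu(t) < \infty$ (by Lemma \ref{pot-unbdd}, since finite energy implies $U^\mu > -\infty$ somewhere), so that $U^\mu$ is a genuine superharmonic function on $\C$ with values in $(-\infty,\infty]$ and $v(z) + U^\mu(z)$ is harmonic; combined with the growth hypotheses on $v$ this harmonic function is bounded above at infinity, hence (being harmonic and bounded above on $\C$) is a constant $c$, so $v = c - U^\mu$ on $\C$.

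Next I would pass to the sphere via the inverse stereographic projection $T$, exactly as in Theorem \ref{Frost-C}. Set $\tilde\mu := T_* \mu$, a measure on $\mathbf S$ of finite logarithmic energy (by the energy correspondence and $\int \log(1+|t|^2)\,d\mu < \infty$), putting no mass at the north pole $P_0$. The potential identity (\ref{pot-C-S}) turns $v = c - U^\mu$ into a statement about $U^{\tilde\mu}$ on $\mathbf S \setminus \{P_0\}$, and likewise one wants to represent $u$ as $c' - U^{\sigma}$ for a suitable measure $\sigma$ on $\mathbf S$ — but $\Delta u$ need not have finite energy, so instead I would keep $u$ as a subharmonic function and work with the superharmonic function $v - u = U^\mu - U^{\Delta u/(2\pi)} + (\text{const})$ directly. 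The cleaner route: $w_0 := v - u$ is superharmonic on $\C$ (difference of superharmonic $v$... wait, $v$ is subharmonic; rather $-u$ is superharmonic and $v$ subharmonic, so $v - u$ is neither in general). I would therefore instead apply the unweighted/weighted Frostman machinery: since $v = c - U^\mu$, the hypothesis $u \le v$ a.e.-$\mu$ reads $u \le c - U^\mu$ on $\supp\mu$ off a $\mu$-null set.

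The heart of the argument is then the classical domination principle in the form: if $\mu$ is a measure of finite logarithmic energy with compact support and $s$ is superharmonic on $\C$ with $s \ge U^\mu$ $\mu$-a.e. and $s$ suitably bounded below, then $s \ge U^\mu$ everywhere — applied with $s := c - u$ (superharmonic) and checking the inequality $c - u \ge U^\mu$ holds $\mu$-a.e. by hypothesis. For the noncompact case one transfers to $\mathbf S$: on the sphere all measures in question have compact support (the sphere is compact), and the maximum principle and continuity/domination principles there (Theorem \ref{basics}, items 3–4, valid in $\R^n$) apply to $\tilde\mu$, which has finite energy and no mass at $P_0$. Concretely, one shows $U^{\tilde u}_{\mathbf S} \le U^{\tilde v}_{\mathbf S}$ off a log-polar set using the $\mathbf S$-domination principle with the a.e.-$\tilde\mu$ inequality, then upgrades "off a log-polar set" to "everywhere" by the minimum principle for the superharmonic difference (a superharmonic function bounded below that is $\ge 0$ off a log-polar set is $\ge 0$ everywhere, since log-polar sets are removable/negligible for this), and finally projects back to $\C$ via (\ref{pot-C-S}), where the $\log(1+|z|^2)$ correction terms cancel because they appear identically on both sides. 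The main obstacle I anticipate is handling the exceptional log-polar set and the passage back and forth through $T$ while keeping track of the additive correction terms and the value at $P_0$; the growth hypothesis $u - v$ bounded above at infinity is precisely what guarantees that $\tilde u$ does not develop bad behavior at $P_0$ that would violate the sphere domination principle there. I would cite \cite[Corollary A.2]{BL} for the clean packaging, noting that its proof is exactly this stereographic reduction.
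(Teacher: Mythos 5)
The paper does not actually prove this proposition: it is quoted verbatim from \cite[Corollary A.2]{BL}, so there is no internal argument to compare yours with, and your sketch has to stand on its own. As it stands it has gaps. A first, repairable one: you assert that $h:=v+U^{\mu}$ (with $\mu=\frac{1}{2\pi}\Delta v$) is ``harmonic and bounded above at infinity, hence constant.'' Harmonicity is fine, but the upper bound is not pointwise obvious: $v(z)\leq\log|z|+O(1)$, while $U^{\mu}$ need \emph{not} satisfy $U^{\mu}(z)\leq-\mu(\C)\log|z|+O(1)$ pointwise when $\supp\mu$ is unbounded (even with $I(\mu)<\infty$ it can be $+\infty$ on a polar set running out to infinity). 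One can fix this, e.g.\ by averaging $h\leq\log|z|+C+U^{\mu}$ over unit circles centred at $z_{0}$, using harmonicity of $h$ and $\frac{1}{2\pi}\int_{0}^{2\pi}\log\frac{1}{|z_{0}+e^{i\theta}-t|}\,d\theta\leq 0$, and then a Liouville-type argument (keeping track of the case $\mu(\C)<1$), but as written the step is unjustified.

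The serious gap is at the heart of the argument: the statement to be proved \emph{is} a domination principle, and your proof consists of invoking ``the classical domination principle'' for a general superharmonic majorant $s=c-u$ and a measure $\mu$ with non-compact support, and, after projecting to the sphere, ``the maximum principle and continuity/domination principles there (Theorem \ref{basics}, items 3--4).'' Items 3 and 4 of Theorem \ref{basics} are the first maximum principle and the continuity principle; the domination principle is not among the tools the paper establishes in $\R^{n}$ and does not follow trivially from them, while the textbook version (Saff--Totik II.3.2) compares two potentials of \emph{compactly supported} finite measures. In your setting the competitor $u\in L(\C)$ has a Riesz measure with possibly infinite energy and infinite logarithmic moment, and after transfer the Riesz measure of the projected $v$ may carry an atom $(1-\mu(\C))\delta_{P_{0}}$ at the north pole, about which the hypothesis $u\leq v$ a.e.-$\Delta v$ says nothing; this is precisely where ``$u-v$ bounded above at infinity'' must be used quantitatively, and your sketch only gestures at it. In addition, the proposed ``minimum principle for the superharmonic difference'' does not apply as stated, since a difference of two potentials is not superharmonic. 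Finally, the closing remark that \cite[Corollary A.2]{BL} is proved ``exactly'' by this stereographic reduction is not accurate (that appendix argues via energy methods for the Monge--Amp\`ere/Laplace operator, not via projection to the sphere); and if one is simply going to cite it, the argument reduces to the citation, which is exactly what the paper does.
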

\noindent Here, $\Delta v$ need not have compact support.

We can now state a weighted version of the Bernstein-Walsh lemma with a weakly admissible weight (see \cite[Theorem III.2.1]{ST} for the case of an admissible weight). This will be used in section 8 to get a version for appropriate polynomials on the sphere (Theorem \ref{BW-S}). We let $\mathcal P_k(\C)$ denote the complex-valued polynomials of a complex variable of degree at most $k$. 
\begin{theorem}\label{BW-C}
Let $K$ be a closed nonpolar subset of $\C$ and $Q$ a weakly admissible weight on $K$. If $p_{k}\in \mathcal P_k(\C)$ and
$$|p_{k}(z)e^{-kQ(z)}|\leq M\qquad\text{ for q.e. }z\in S_{w},$$
then 
$$|p_{k}(z)|\leq M\exp(k(-U^{\mu_{K,Q}}(z)+F_{w})),\qquad z\in\C,$$
and
$$|p_{k}(z)e^{-kQ(z)}|\leq M,\qquad \text{for q.e. }z\in K.$$
\end{theorem}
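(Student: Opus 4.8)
The plan is to reduce the weighted Bernstein--Walsh estimate on $\C$ to the domination principle for functions in the class $L(\C)$, exactly as in the classical admissible case but being careful about behavior at infinity since $\mu_{K,Q}$ need not have compact support. First I would consider the function
$$
u(z):=\frac{1}{k}\log|p_k(z)|.
$$
Since $p_k$ has degree at most $k$, $u$ is subharmonic on $\C$ and $u(z)-\log|z|$ is bounded above as $|z|\to\infty$, so $u\in L(\C)$. The natural comparison function is
$$
v(z):=-U^{\mu_{K,Q}}(z)+F_w.
$$
By Theorem \ref{Frost-C}, $I(\mu_{K,Q})<\infty$, and $v\in L(\C)$: indeed $-U^{\mu_{K,Q}}(z)-\log|z|\to 0$-ish, more precisely Lemma \ref{pot-unbdd} together with $\int\log(1+|t|)d\mu_{K,Q}(t)<\infty$ gives $-U^{\mu_{K,Q}}(z)\le\log(1+|z|)+C$, so $v(z)-\log|z|$ is bounded above. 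Moreover $\Delta v=2\pi\mu_{K,Q}$ has finite logarithmic energy, $I(\Delta v)<\infty$.

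Next I would verify the hypotheses of the Proposition (domination principle): I need $u(z)-v(z)$ bounded above as $|z|\to\infty$ and $u\le v$ a.e.-$\Delta v$, i.e., a.e.-$\mu_{K,Q}$. For the boundedness at infinity, $u(z)\le\log|z|+o(\log|z|)$ while $v(z)=-U^{\mu_{K,Q}}(z)+F_w\ge -\log(1+|z|)-C+F_w$; this shows $u-v$ is bounded above near infinity once one notes that actually $-U^{\mu_{K,Q}}(z)=\log|z|+o(1)$ since $\mu_{K,Q}$ is a probability measure (split $\int\log\frac{1}{|z-t|}d\mu_{K,Q}(t)$ as $-\log|z|+\int\log\frac{|z|}{|z-t|}d\mu_{K,Q}(t)$ and apply dominated convergence using the $\log(1+|t|)$ integrability). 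For the a.e.-$\mu_{K,Q}$ inequality: by Theorem \ref{Frost-C}(4), for $z\in S_w$ we have $U^{\mu_{K,Q}}(z)+Q(z)\le F_w$, hence $e^{-kQ(z)}\le e^{k(U^{\mu_{K,Q}}(z)-F_w)}$ there; combining with the hypothesis $|p_k(z)e^{-kQ(z)}|\le M$ holding q.e. on $S_w$ gives $|p_k(z)|\le M e^{k(-U^{\mu_{K,Q}}(z)+F_w)}$, i.e. $u(z)\le \frac1k\log M+v(z)$, q.e. on $S_w$; since $\mu_{K,Q}$ has finite energy it places no mass on polar sets, so this holds $\mu_{K,Q}$-a.e. (One should absorb the harmless constant by replacing $v$ with $v+\frac1k\log M$, or equivalently divide $p_k$ by $M$ at the outset.) Applying the Proposition then yields $u(z)\le\frac1k\log M+v(z)$ for all $z\in\C$, which is precisely $|p_k(z)|\le M\exp(k(-U^{\mu_{K,Q}}(z)+F_w))$ on $\C$.

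Finally, the third assertion follows by multiplying through by $e^{-kQ(z)}$: for q.e.\ $z\in K$ (in fact the exceptional set is the polar set $P$ of Theorem \ref{Frost-C}(4)) we have $U^{\mu_{K,Q}}(z)+Q(z)\ge F_w$, i.e. $e^{-kQ(z)}\le e^{k(U^{\mu_{K,Q}}(z)-F_w)}$, so the pointwise bound on $|p_k(z)|$ just proved gives $|p_k(z)e^{-kQ(z)}|\le M$ for q.e.\ $z\in K$. I expect the only real subtlety — the ``main obstacle'' — to be the behavior at infinity: checking that $u-v$ is bounded above there and that $v\in L(\C)$ with $I(\Delta v)<\infty$, all of which hinge on the integrability $\int\log(1+|t|)d\mu_{K,Q}(t)<\infty$ from Lemma \ref{pot-unbdd} and the finiteness of $I(\mu_{K,Q})$ from Theorem \ref{Frost-C}; once the correct version of the domination principle (the Proposition above, which explicitly allows $\Delta v$ with noncompact support) is in hand, the rest is the standard Bernstein--Walsh argument.
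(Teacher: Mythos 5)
Your overall strategy is exactly the paper's: apply the domination principle (the Proposition quoted from \cite{BL}) to $u=\frac1k\log(|p_k|/M)$ and $v=-U^{\mu_{K,Q}}+F_w$, using Lemma \ref{pot-unbdd} for $v\in L(\C)$ and the finiteness of $I(\mu_{K,Q})$, noting that $\mu_{K,Q}$ charges no polar sets so the q.e.\ hypothesis on $S_w$ becomes an a.e.-$\Delta v$ hypothesis, and then deducing the third inequality from the first Frostman inequality in Theorem \ref{Frost-C}. All of that matches the paper's proof.

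The one place where you go beyond the paper is also the place where your argument as written does not hold up: the verification that $u-v$ is bounded above at infinity via the claim $-U^{\mu_{K,Q}}(z)=\log|z|+o(1)$, justified by splitting off $-\log|z|$ and invoking dominated convergence with the integrability of $\log(1+|t|)$. When $S_w$ is unbounded there is no integrable dominating function for $t\mapsto\log\frac{|z|}{|z-t|}$ uniformly in $z$ (the trouble is $z$ passing close to mass of $\mu_{K,Q}$ located near infinity), and the asserted asymptotics are in fact false for general probability measures with finite energy and finite logarithmic moment: placing masses $2^{-n}$ on circles of radius $e^{-3^n}$ centered at $z_n=2^n$ gives such a measure with $U^{\mu}(z_n)+\log|z_n|\to+\infty$. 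So the argument, which uses nothing about $\mu_{K,Q}$ beyond Lemma \ref{pot-unbdd}, cannot be correct as stated. What you actually need is only the one-sided bound $U^{\mu_{K,Q}}(z)\leq-\log|z|+O(1)$ near infinity, and for the equilibrium measure this is true but requires its special properties: on $S_w$ the second Frostman inequality gives $U^{\mu_{K,Q}}\leq F_w-Q$, and weak admissibility (\ref{weak-adm}) plus the lower boundedness of $Q$ show that $U^{\mu_{K,Q}}(z)+\log|z|$ is bounded above on $S_w$ (equivalently, the spherical potential $U^{T_*\mu_{K,Q}}$ is bounded above on $T(S_w)$, which carries full $T_*\mu_{K,Q}$-mass); the maximum principle for logarithmic potentials (item 3 of Theorem \ref{basics}, applied on the sphere) then propagates this bound to all of $\C$. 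With that substitution your proof is complete and coincides with the paper's; the paper itself leaves this hypothesis of the domination principle unexamined, so supplying the correct justification is a worthwhile addition rather than a change of route.
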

\begin{proof}
The function $g:=\log(|p_{k}|/M)/k$ belongs to $L(\C)$ and 
$$g(z)\leq Q(z)\leq -U^{\mu_{K,Q}}(z)+F_{w}\quad\text{for q.e. }z\in S_{w}.$$
By Lemma \ref{pot-unbdd}, $-U^{\mu_{K,Q}}+F_{w}$ also belongs to $L(\C)$ and 
$-\Delta U^{\mu_{K,Q}}=2\pi\mu_{K,Q}$ is of finite energy. Hence, by the above domination principle, 
$$g(z)\leq -U^{\mu_{K,Q}}(z)+F_{w},\qquad z\in\C,$$
which, together with the first inequality in item 4. of Theorem \ref{Frost-C}, proves our contention.
\end{proof}
We proceed with properties of the weighted Vandermonde. We have the relation
$$|VDM_k^Q(z_1,...,z_k)|=|VDM_k^{\tilde Q}(T(z_1),...,T(z_k))|,$$
from which it follows that the assertions of Theorem \ref{sec3} about the Vandermonde can be carried over to $\C$. Since the result may be of interest on its own, we state it as a theorem.
\begin{theorem} \label{afpinc}
Let $K$ be a closed nonpolar subset of $\C$ and $Q$ a weakly admissible weight on $K$. The $k$-th weighted diameters $\delta_{k}^{Q}(K)$, $k\geq 2$, are finite and
\begin{enumerate}
\item if $\{\mu_k=\frac{1}{k}\sum_{j=1}^k\delta_{x_j^{(k)}}\}\subset \mathcal M(K)$ converge weakly to $\mu\in \mathcal M(K)$, then 
\begin{equation}\label{upboundVDM-C}
\limsup_{k\to \infty} |VDM_k^Q(x_1^{(k)},...,x_k^{(k)})|^{2/k(k-1)}\leq \exp{(-I^Q(\mu))};
\end{equation}
\item we have
$\delta^Q(K):=\lim_{k\to \infty} \delta_k^Q(K)=\exp{(-V_w)};$
\item if $\{x_j^{(k)}\}_{j=1,...,k; \ k=2,3,...}\subset K$ and 
$$\lim_{k\to \infty} |VDM_k^Q(x_1^{(k)},...,x_k^{(k)})|^{2/k(k-1)}= \exp{(-V_w)}$$
then 
$$\mu_k=\frac{1}{k}\sum_{j=1}^k\delta_{x_j^{(k)}}\to \mu_{K,Q} \ \hbox{weakly}.$$
\end{enumerate}
\end{theorem}

\begin{remark} The Frostman-type result in 4. of Theorem \ref{Frost-C} and 2. of Theorem \ref{afpinc} (as well as 3. in the special case of arrays of weighted Fekete points) have also been proved in \cite{Bet}.
\end{remark}

\section{Bernstein-Markov properties in $\R^n$}\label{Sec-BM}
In sections 4-8, we return to the setting of compact sets in $\R^n$. In particular, admissible weights will be in the sense of Definition \ref{realad}. For $k=1,2,...$, let $\mathcal P_k=\mathcal P_k^{(n)}$ denote the \emph{real} polynomials  in $n$ real variables $x=(x_1,...,x_n)$ of degree at most $k$ and $\mathcal P_k(\C)$ denote the \emph{complex} holomorphic polynomials  in $n$ complex variables $z=(z_1,...,z_n)$ of degree at most $k$. Given a compact set $K\subset \C^n$ and a positive measure $\nu$ on $K$, we say that $(K,\nu)$ satisfies the Bernstein-Markov property (or $\nu$ is a Bernstein-Markov measure for $K$) if for all $p_k\in \mathcal P_k(\C)$, 	
$$||p_k||_K:=\sup_{z\in K} |p_k(z)|\leq  M_k||p_k||_{L^2(\nu)}  \ \hbox{with} \ \limsup_{k\to \infty} M_k^{1/k} =1.$$
It was shown in \cite{PELD} that any compact set in $\C^n$ admits a Bernstein-Markov measure for complex holomorphic polynomials; indeed, the following stronger statement is true.

\begin{proposition} [\cite{PELD}]  \label{allbm} Let $K\subset \R^n$. There exists a measure $\nu \in \mathcal M(K)$ such that for all complex-valued polynomials $p$ of degree at most $k$ in the (real) coordinates $x=(x_1,...,x_n)$ we have
$$||p||_K\leq M_k ||p||_{L^2(\nu)}$$
where $\limsup_{k\to \infty}M_k^{1/k}=1$.

\end{proposition}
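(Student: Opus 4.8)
The plan is to construct the measure $\nu$ by exhausting $K$ with finitely many small pieces and placing, on each piece, a carefully chosen test measure, then combining these via a gluing argument. The key point is that we must control \emph{all} complex polynomials of degree at most $k$ in the real coordinates simultaneously; since such polynomials form a finite-dimensional vector space, norm equivalences are automatic on that space, but the constants degrade with $k$, and the whole content of the statement is that the degradation is subexponential.

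First I would fix a point $a\in K$ and a small ball $B=B(a,r)$, and seek a probability measure $\nu_B$ supported on $K\cap \bar B$ such that for every polynomial $p$ of degree $\le k$ in the real variables $x=(x_1,\dots,x_n)$ one has $\|p\|_{K\cap \bar B}\le M_k\|p\|_{L^2(\nu_B)}$ with $\limsup M_k^{1/k}=1$. The natural candidate is to sample $\nu_B$ as a suitable average (or a weighted sum of point masses at a well-distributed net) so that the $L^2(\nu_B)$ norm controls the sup norm via a Christoffel-function estimate: writing $p(x)=\sum_\alpha c_\alpha m_\alpha(x)$ over a monomial basis $\{m_\alpha\}_{|\alpha|\le k}$ of dimension $N=N(k,n)=\binom{k+n}{n}$, one gets $\|p\|_{K\cap \bar B}^2\le N\,\lambda_{\min}(G)^{-1}\|p\|_{L^2(\nu_B)}^2$ where $G$ is the Gram matrix of the $m_\alpha$ in $L^2(\nu_B)$. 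The task is thus to choose $\nu_B$ so that $\lambda_{\min}(G)^{-1}$ grows at most subexponentially in $k$; since $N$ itself grows subexponentially (polynomially in $k$) this suffices. One clean way to do this is to take $\nu_B$ to be the restriction to $K\cap \bar B$ of a fixed ``volume-like'' measure — for instance, if $K\cap \bar B$ carries enough mass in the sense of Hausdorff content, a Frostman-type measure — and invoke a Remez- or Markov-type inequality valid for the relevant class of sets; alternatively, one reduces to the known holomorphic Bernstein-Markov result in $\C^n$ by the standard trick of identifying real polynomials of degree $k$ in $x\in\R^n$ with holomorphic polynomials of degree $k$ on $\C^n$ restricted to the real slice, so that the cited result of \cite{PELD} applies directly once $K$ is viewed as a compact subset of $\R^n\subset\C^n$.

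Granting the local statement, I would then glue. Cover $K$ by finitely many balls $B_1,\dots,B_L$ and set $\nu=\sum_{\ell=1}^L \nu_{B_\ell}$ (a finite positive measure, renormalizable to a probability measure, which only changes the constants by a fixed factor). For any $p$ of degree $\le k$, choose $\ell$ with $\|p\|_K=\|p\|_{K\cap \bar B_\ell}$; then
$$\|p\|_K=\|p\|_{K\cap \bar B_\ell}\le M_k^{(\ell)}\|p\|_{L^2(\nu_{B_\ell})}\le M_k^{(\ell)}\|p\|_{L^2(\nu)},$$
and taking $M_k=\max_\ell M_k^{(\ell)}$ we still have $\limsup_k M_k^{1/k}=1$ because the maximum of finitely many sequences with subexponential growth is subexponential. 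This yields the asserted inequality on all of $K$.

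The main obstacle is the local estimate: producing, on an \emph{arbitrary} compact piece $K\cap\bar B$ (which could be very thin, e.g.\ lower-dimensional, or fractal), a measure for which the inverse smallest Gram eigenvalue grows subexponentially. The cleanest resolution, and the one I expect the authors intend, is not to prove this from scratch but to cite \cite{PELD} directly — Proposition \ref{allbm} is essentially a restatement (for real coordinates) of the $\C^n$ Bernstein-Markov existence theorem proved there, since a complex-valued polynomial of degree $\le k$ in the real variables $x_1,\dots,x_n$ is exactly the restriction to $\R^n$ of a holomorphic polynomial of degree $\le k$ on $\C^n$, and a compact $K\subset\R^n$ is a compact subset of $\C^n$. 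So in fact the proof is a one-line reduction: \emph{apply the main result of \cite{PELD} to $K$ regarded as a compact subset of $\C^n$, and restrict the resulting Bernstein-Markov measure and the polynomials back to $\R^n$.} If instead one wants a self-contained argument, the delicate technical ingredient is exactly the uniform subexponential control of Christoffel functions, which is where all the work would concentrate.
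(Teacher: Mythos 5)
Your closing reduction is exactly the paper's approach: Proposition \ref{allbm} is not proved in the paper but attributed to \cite{PELD}, the point being precisely that a complex-valued polynomial of degree at most $k$ in the real coordinates is the restriction to $\R^n$ of a holomorphic polynomial of degree at most $k$ on $\C^n$, so the $\C^n$ Bernstein-Markov existence theorem of \cite{PELD} applied to $K\subset\R^n\subset\C^n$ gives the statement immediately. The preliminary covering/Christoffel-function sketch is superfluous (and, as you note, its local step is just the same problem again), but since you correctly discard it in favor of the one-line citation, the proposal is sound and matches the paper.
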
 

\noindent For a compact set $K\subset \R^n$ and a positive measure $\nu$ on $K$, we will say that $(K,\nu)$ satisfies the Bernstein-Markov property if for all $p_k\in \mathcal P_k$, 	
$$||p_k||_K:=\sup_{x\in K} |p_k(x)|\leq  M_k||p_k||_{L^2(\nu)}  \ \hbox{with} \ \limsup_{k\to \infty} M_k^{1/k} =1.$$
More generally, for $K\subset \R^n$ compact, $Q\in \AA(K)$, and $\nu$ a measure on $K$, we say that the triple $(K,\nu,Q)$ satisfies the weighted Bernstein-Markov property if for all $p_k\in \mathcal P_k$, 
$$||e^{-kQ}p_k||_K \leq M_k ||e^{-kQ}p_k||_{L^2(\nu)} \ \hbox{with} \ \limsup_{k\to \infty} M_k^{1/k} =1.$$ 
We have the analogous notion of weighted Bernstein-Markov property for $p_k\in \mathcal P_k(\C)$ if $K\subset \C^n$ and $Q\in \AA(K)$.

\begin{remark}\label{lp} These properties can be stated with $L^p$ norms for any $0<p< \infty$. The proof of Theorem 3.4.3 in \cite{StTo} in $\C$ that if $(K,\nu)$ satisfies an (weighted) $L^p-$Bernstein-Markov property  for $\mathcal P_k(\C)$ for some $0<p<\infty$ then $(K,\nu)$ satisfies an (weighted) $L^p-$Bernstein-Markov property for all $0<p<\infty$ just uses H\"older's inequality and remains valid in our setting. 

\end{remark}

Now another very important observation: Theorem 3.2 of \cite{bloom} works -- indeed, is even stated -- in $\R^n$ for any $n\geq 2$:

\begin{theorem} \label{blooms} Given $K\subset \R^n$ compact, and $Q$ a {\it continuous} weight, if $\nu$ is a finite measure on $K$ such that $(K,\nu)$ satisfies a Bernstein-Markov property, then the triple $(K,\nu,Q)$ satisfies a weighted Bernstein-Markov property.

\end{theorem}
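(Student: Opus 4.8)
The plan is to reduce the weighted Bernstein-Markov property to the unweighted one by absorbing the continuous factor $e^{-kQ}$ into a polynomial approximation, exploiting that $Q$ is continuous on the compact set $K$. First I would recall the standard trick: for a continuous function $Q$ on compact $K$, by the Stone--Weierstrass theorem (applied to real polynomials in $n$ variables, which is legitimate since $\mathcal P^{(n)}$ is an algebra separating points of $\R^n$), one can approximate $Q$ uniformly on $K$ by polynomials. More precisely, fix $\varepsilon>0$ and choose a polynomial $q$ with $\|Q-q\|_K<\varepsilon$. Then for a degree-$k$ polynomial $p_k$, one writes $e^{-kQ}p_k = e^{-k(Q-q)}\cdot e^{-kq}p_k$, where the first factor is bounded between $e^{-k\varepsilon}$ and $e^{k\varepsilon}$ on $K$, and the second factor involves $e^{-kq}$, still not a polynomial. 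The usual refinement is to approximate $e^{-Q}$ itself, or rather to use that $e^{-kq}$ can be compared with a polynomial of degree $O(k)$: one approximates the single-variable exponential on the range of $q(K)$ by a polynomial, so that $e^{-kq}$ is within a factor $(1+\varepsilon)^k$ of a polynomial of degree $d_k$ with $d_k/k\to 0$ (or $d_k = Ck$ with control), and then $e^{-kq}p_k$ is comparable to a polynomial of degree $k+d_k$. This is precisely the content of \cite[Theorem 3.2]{bloom} referenced in the statement, and I would simply follow that argument verbatim, observing that nothing in it uses $n=2$.

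The key steps, in order, are: (1) Reduce to showing that for every $\varepsilon>0$ there is a constant $C_k$ with $C_k^{1/k}\to 1$ such that $\|e^{-kQ}p_k\|_K \le C_k\,\|e^{-kQ}p_k\|_{L^2(\nu)}$ for all $p_k\in\mathcal P_k$; letting $\varepsilon\to 0$ along a diagonal sequence then yields $\limsup M_k^{1/k}=1$. (2) Use uniform continuity of $Q$ and Stone--Weierstrass on $\R^n$ to find, for each $\varepsilon$, a polynomial $q=q_\varepsilon$ with $\|Q-q\|_K<\varepsilon$; this step replaces $e^{-kQ}$ by $e^{-kq}$ at the cost of a multiplicative error $e^{2k\varepsilon}$ on both sides of the inequality, which is harmless. (3) Approximate $t\mapsto e^{-t}$ on the compact interval $q(K)$ by polynomials: for each $\varepsilon$ there is a polynomial $P=P_\varepsilon$ of one variable with $|e^{-t}-P(t)|\le \varepsilon e^{-t}$ on $q(K)$, hence $|e^{-kq(x)} - P(q(x))^{?}|$... more precisely one uses $e^{-kq}=(e^{-q})^k$ and approximates $e^{-q}$ by a polynomial $r(x):=P_\varepsilon(q(x))$ on $K$ with $|e^{-q(x)}-r(x)|\le\varepsilon e^{-q(x)}$, so that $(1-\varepsilon)^k e^{-kq}\le |r|^k\le (1+\varepsilon)^k e^{-kq}$ on $K$ (taking $\varepsilon$ small enough that $r>0$ on $K$). (4) Now $r(x)^k p_k(x)$ is a genuine polynomial of degree at most $k(\deg r)+k =: k'$, with $k'/k$ bounded; apply the (unweighted) Bernstein-Markov property of $(K,\nu)$ to this polynomial: $\|r^k p_k\|_K\le M_{k'}\|r^k p_k\|_{L^2(\nu)}$ with $M_{k'}^{1/k'}\to 1$, hence $M_{k'}^{1/k}\to 1$ as well since $k'/k$ is bounded. (5) Chain the comparisons: $\|e^{-kQ}p_k\|_K \lesssim e^{k\varepsilon}\|e^{-kq}p_k\|_K \lesssim e^{k\varepsilon}(1-\varepsilon)^{-k}\|r^kp_k\|_K \le e^{k\varepsilon}(1-\varepsilon)^{-k}M_{k'}\|r^kp_k\|_{L^2(\nu)}\lesssim e^{k\varepsilon}(1-\varepsilon)^{-k}(1+\varepsilon)^kM_{k'}\|e^{-kq}p_k\|_{L^2(\nu)}\lesssim e^{2k\varepsilon}(1-\varepsilon)^{-k}(1+\varepsilon)^kM_{k'}\|e^{-kQ}p_k\|_{L^2(\nu)}$. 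Taking $k$-th roots and then $\varepsilon\to 0$ finishes the proof. (The finiteness of $\nu$ enters only to guarantee $\|\cdot\|_{L^2(\nu)}<\infty$ for bounded functions, so the chain of inequalities is between finite quantities.)

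The main obstacle, such as it is, is Step (3)–(4): making sure that the polynomial $r(x)^k$ used to stand in for $e^{-kq(x)}$ has degree that grows only linearly in $k$ — which it does, since $\deg(r^k)=k\deg r$ and $\deg r$ is a fixed constant depending only on $\varepsilon$ — and then checking that plugging a degree-$k'$ polynomial with $k'=k(1+\deg r)$ into the unweighted Bernstein-Markov estimate still produces a subexponential constant in $k$. This is routine because $k'/k = 1+\deg r$ is bounded (for fixed $\varepsilon$), so $M_{k'}^{1/k} = (M_{k'}^{1/k'})^{k'/k}\to 1$. Everything else is soft: Stone--Weierstrass for real polynomials holds in $\R^n$ for every $n$, the one-variable approximation of $e^{-t}$ on a compact interval is classical, and the topology of weak convergence and compactness of $K$ are used exactly as in the planar case. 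Thus, as the authors remark, the proof of \cite[Theorem 3.2]{bloom} carries over to $\R^n$ with no essential change; I would present it in this reduced form rather than reproving the one-dimensional approximation lemma.
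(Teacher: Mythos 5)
Your proposal is correct and is essentially the argument the paper relies on: the paper proves Theorem \ref{blooms} simply by citing \cite[Theorem 3.2]{bloom} (noting it is stated in $\R^n$), and your reconstruction — approximating the continuous weight $e^{-Q}$ (via Stone--Weierstrass, with relative error $\varepsilon$) by a positive polynomial $r$ of fixed degree, applying the unweighted Bernstein--Markov inequality to the polynomial $r^k p_k$ of degree $O(k)$, and chaining the pointwise comparisons before letting $\varepsilon\to 0$ — is exactly that standard proof, which indeed uses nothing special about $n=2$.
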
 

\begin{definition} Given $K\subset \R^n$ compact, a finite measure $\nu$ on $K$ is called a {\it strong Bernstein-Markov measure} for $K$ if for any continuous weight $Q$ on $K$, the triple $(K,\nu,Q)$ satisfies a weighted Bernstein-Markov property. We have the analogous notion if $K\subset \C^n$ using $p_k\in \mathcal P_k(\C)$.

\end{definition}

\begin{remark}\label{strongbm}  Combining Proposition \ref{allbm} and Theorem \ref{blooms} we see that {\it any compact set $K$ in $\R^n$ admits a strong Bernstein-Markov measure; and any Bernstein-Markov measure on $K$ is automatically a strong Bernstein-Markov measure for $K$}. This latter equivalence is not necessarily true in the complex setting. For $K\subset \C$, there are well-known sufficient mass-density conditions on a measure $\nu$ on $K$ so that $(K,\nu)$ satisfies a Bernstein-Markov property for $\mathcal P_k(\C)$ \cite{StTo}. In particular, Lebesgue measure on an interval or Lebesgue planar measure on a compact set in $\C$ having $C^1$ boundary satisfy the Bernstein-Markov property. We remark that if $\C\setminus K$ is regular for the Dirichlet problem, the condition that $(K,\nu)$ satisfies a Bernstein-Markov property for $\mathcal P_k(\C)$ is equivalent to the condition that $\nu$ be a regular measure; i.e., $\nu \in {\bf Reg}$ in the terminology of \cite{StTo}. We refer to this book for more details.

Furthermore, for every compact set $K$ in $\R^n$ there exist discrete measures which satisfy  the (strong) Bernstein-Markov property \cite{PELD}. If one considers $K\subset \R^n\subset \C^n$, there are sufficient mass-density conditions on a measure $\nu$ on $K$ so that $(K,\nu)$ satisfies a Bernstein-Markov property for polynomials on $\C^n$ and hence on $\R^n$. For more on this, cf., \cite{BLmass} and \cite{PELD}.
\end{remark}

 \begin{remark}\label{bmprop} Let $K\subset \R^n$ be compact and not log-polar and let $v\in \mathcal A(K)$. If $\alpha$ is a finite measure on $K$ such that $(K,\alpha,v)$ satisfies a weighted Bernstein-Markov property, then 
\begin{enumerate}
\item $(K,c\alpha,v)$ satisfies a weighted Bernstein-Markov property for any $0< c <\infty$ and 
\item $(K,\alpha+\beta,v)$ satisfies a weighted Bernstein-Markov property for any finite measure $\beta$ on $K$. 
\end{enumerate}

\end{remark}

The importance of a (weighted) Bernstein-Markov property is the following consequence on the asymptotic behavior of the (weighted) {\it $L^{2}$ normalization constants} defined by
\begin{equation}\label{L2}
Z^Q_k=Z^{Q}_k(K,\nu):=\int_{K^{k}}|VDM_k^Q({\bf X_k})|^2d\nu({\bf X_k}),
\end{equation}
where ${\bf X_k}:=(x_1,...,x_k)\in K^{k}$ and $\nu$ is a finite positive measure on $K$.
\begin{remark}\label{L2-Z}
The quantity $Z_{k}^{Q}$ appears as the normalization constant in the law of eigenvalues of random matrix models. It is also referred to as the partition function in the theory of Coulomb gases. See Section \ref{appli} for more details on the link between these notions. 
\end{remark}
\begin{proposition} \label{weightedtd} 
Given $K\subset \R^n$ compact and not log-polar, $Q\in \AA(K)$, and $\nu$ a finite measure on $K$ such that $(K,\nu,Q)$ satisfies a weighted Bernstein-Markov property, we have
$$\lim_{k\to \infty} (Z_k^Q)^{1/k(k-1)}=\exp{(-V_w)}=\delta^{Q}(K).$$
\end{proposition}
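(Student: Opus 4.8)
The plan is to establish the two bounds $\limsup_k (Z_k^Q)^{1/k(k-1)}\le e^{-V_w}$ and $\liminf_k (Z_k^Q)^{1/k(k-1)}\ge e^{-V_w}$ separately; the chain of equalities with $\delta^Q(K)$ is then just Theorem \ref{sec3}(2), which already gives $\delta^Q(K)=\lim_k\delta_k^Q(K)=e^{-V_w}$. The upper bound is immediate from the definition of the weighted diameter: $|VDM_k^Q(\mathbf X)|^2\le \delta_k^Q(K)^{k(k-1)}$ for every $\mathbf X\in K^k$, so integrating against the finite measure $\nu^{\otimes k}$ gives $Z_k^Q\le \nu(K)^k\,\delta_k^Q(K)^{k(k-1)}$, whence $(Z_k^Q)^{1/k(k-1)}\le \nu(K)^{1/(k-1)}\,\delta_k^Q(K)\to e^{-V_w}$ as $k\to\infty$ (here $\nu(K)$ is finite and positive, the latter being forced by the Bernstein--Markov hypothesis).

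The substance is the lower bound, where the weighted Bernstein--Markov hypothesis is used. I would fix $k$, let $\mathbf F_k=(f_1,\dots,f_k)$ be weighted Fekete points of order $k$ for $(K,Q)$ — so $\prod_{i<j}|f_i-f_j|^2\prod_{j=1}^k w(f_j)^{2(k-1)}=|VDM_k^Q(\mathbf F_k)|^2=\delta_k^Q(K)^{k(k-1)}$ — and integrate $Z_k^Q=\int_{K^k}\prod_{i<j}|x_i-x_j|^2\prod_j w(x_j)^{2(k-1)}\,d\nu^{\otimes k}$ one variable at a time, freezing $x_\ell$ at $f_\ell$ at the $\ell$-th step. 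The key elementary observation is that $x\mapsto\prod_{i=1}^{k-1}|x-y_i|^2$ is a genuine nonnegative member of $\mathcal P_{2(k-1)}$ for any points $y_1,\dots,y_{k-1}\in\R^n$, since each $|x-y_i|^2$ is a real polynomial of degree $2$ in the coordinates of $x$. Consequently, at the $\ell$-th step the integrand, viewed as a function of $x_\ell$, equals $P_\ell(x_\ell)\,e^{-2(k-1)Q(x_\ell)}$ with $P_\ell(x_\ell)=\prod_{i<\ell}|f_i-x_\ell|^2\prod_{j>\ell}|x_\ell-x_j|^2\in\mathcal P_{2(k-1)}$ nonnegative, and the weighted $L^1$-Bernstein--Markov property (legitimate by Remark \ref{lp}) yields $\int_K P_\ell\, e^{-2(k-1)Q}\,d\nu\ge M_{2(k-1)}^{-1}\,P_\ell(f_\ell)\,e^{-2(k-1)Q(f_\ell)}$. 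Running this through all $k$ integrations reconstitutes every cross-term $|f_i-f_j|^2$ with $i<j$ together with the full weight product $\prod_j w(f_j)^{2(k-1)}$, at the cost of exactly $k$ factors $M_{2(k-1)}^{-1}$, so $Z_k^Q\ge M_{2(k-1)}^{-k}\,\delta_k^Q(K)^{k(k-1)}$, i.e. $(Z_k^Q)^{1/k(k-1)}\ge M_{2(k-1)}^{-1/(k-1)}\,\delta_k^Q(K)$. Since the Bernstein--Markov constants satisfy $\limsup_m M_m^{1/m}=1$ and may be taken $\ge1$, the factor $M_{2(k-1)}^{-1/(k-1)}=(M_{2(k-1)}^{1/2(k-1)})^{-2}$ tends to $1$, giving $\liminf_k(Z_k^Q)^{1/k(k-1)}\ge \lim_k\delta_k^Q(K)=e^{-V_w}$.

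I expect the main obstacle to be the bookkeeping in the iterated integration: one must verify that after freezing $x_1,\dots,x_{\ell-1}$ the remaining integrand still factors, in the variable $x_\ell$, as a \emph{nonnegative} polynomial of degree \emph{exactly} $2(k-1)$ times $w(x_\ell)^{2(k-1)}$ (so that the single constant $M_{2(k-1)}$ serves at every one of the $k$ steps), and that the successive substitutions $x_\ell\rightsquigarrow f_\ell$ assemble precisely $|VDM_k^Q(\mathbf F_k)|^2$. A secondary point worth flagging is that $\prod_i|x-y_i|$ is itself \emph{not} a polynomial on $\R^n$ for $n\ge2$, so one genuinely needs the $L^1$ (not $L^2$) form of the weighted Bernstein--Markov property, applied to the honest polynomial $\prod_i|x-y_i|^2$; Remark \ref{lp} supplies exactly this.
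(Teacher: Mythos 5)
Your proposal is correct and follows essentially the same route as the paper: the upper bound via $Z_k^Q\le \nu(K)^k\,\delta_k^Q(K)^{k(k-1)}$ and Theorem \ref{sec3}(2), and the lower bound via weighted Fekete points combined with the $L^1$ weighted Bernstein--Markov inequality (Remark \ref{lp}) applied one variable at a time to the nonnegative degree-$2(k-1)$ polynomials $\prod_i|x-y_i|^2$, yielding $|VDM_k^Q(\mathbf F_k)|^2\le M_{2(k-1)}^k Z_k^Q$. The paper's proof runs this same chain of inequalities in the opposite direction (starting from the Fekete Vandermonde and successively replacing Fekete points by integrations), so the difference is only presentational.
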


\begin{proof} We clearly have $\limsup_{k\to \infty} (Z_k^Q)^{1/k(k-1)}\leq \exp{(-V_w)}$ from 2. of Theorem \ref{sec3}. For the reverse inequality with $\liminf$, note that 
$$|VDM_k(x_1,...,x_k)|^2 =\prod_{i<j} |x_i-x_j|^2=\prod_{i<j} \bigl(\sum_{l=1}^n(x_{i,l}-x_{j,l})^2\bigr)$$
(where we write $x_i =(x_{i,1},...,x_{i,n})$)  
is a polynomial of degree $k(k-1)$ in the $nk$ real coordinates $\{x_{i,l}\}_{l=1,...,n; \ i=1,...,k}$. 
Now if ${\bf F_k}=(f_1,...,f_k)$ is a set of weighted Fekete points of order $k$ for $K,Q$, then 
$$p(x_1):=|VDM_k(x_1,f_2,...,f_k)|^2\prod_{j=2}^ke^{-2(k-1)Q(f_j)}$$
is a (nonnegative) polynomial of degree $2(k-1)$ in (the coordinates of) $x_1$. By definition of weighted Fekete points, for any $x_1\in K$,
$$p(x_1) e^{-2(k-1)Q(x_1)} \leq \max_{x\in K} p(x)e^{-2(k-1)Q(x)}=p(f_1) e^{-2(k-1)Q(f_1)}$$
since this right-hand-side is precisely $|VDM_k^Q({\bf F_k})|^2$. By the weighted Bernstein-Markov property using $L^1$ norm instead of $L^2$ (see Remark \ref{lp}), 
$$|VDM_k^Q({\bf F_k})|^2\leq M_{2(k-1)} \int_K |VDM_k(x_1,f_2,...,f_k)|^2\cdot e^{-2(k-1)Q(x_1)}\cdot \prod_{j=2}^ke^{-2(k-1)Q(f_j)}d\nu(x_1).$$
Now for each fixed $x_1\in K$, we consider
$$p_2(x_2):= |VDM_k(x_1,x_2,f_3...,f_k)|^2\cdot e^{-2(k-1)Q(x_1)} \cdot \prod_{j=3}^ke^{-2(k-1)Q(f_j)}$$
which is a (nonnegative) polynomial of degree $2(k-1)$ in (the coordinates of) $x_2$. Then
$$p_2(f_2) e^{-2(k-1)Q(f_2)}\leq  \max_{x\in K} p_2(x) e^{-2(k-1)Q(x)}.$$
The left-hand-side is 
$$|VDM_k(x_1,f_2,f_3...,f_k)|^2\cdot e^{-2(k-1)Q(x_1)} \cdot \prod_{j=2}^ke^{-2(k-1)Q(f_j)}.$$
The right-hand-side, by the weighted Bernstein-Markov property, is bounded above by
$$M_{2(k-1)} \int_K |VDM_k(x_1,x_2,f_3,...,f_k)|^2\cdot e^{-2(k-1)Q(x_1)}\cdot\prod_{j=3}^ke^{-2(k-1)Q(f_j)}e^{-2(k-1)Q(x_2)}d\nu(x_2).$$
Plugging these into our first estimate, we have
$$|VDM_k^Q({\bf F_k})|^2\leq M_{2(k-1)} \int_K |VDM_k(x_1,f_2,...,f_k)|^2\cdot e^{-2(k-1)Q(x_1)}\cdot \prod_{j=2}^ke^{-2(k-1)Q(f_j)}d\nu(x_1)$$
$$\leq M_{2(k-1)} \int_K\bigl[  M_{2(k-1)} \int_K |VDM_k(x_1,x_2,f_3,...,f_k)|^2\cdot e^{-2(k-1)Q(x_1)}\cdot$$
$$\prod_{j=3}^ke^{-2(k-1)Q(f_j)}e^{-2(k-1)Q(x_2)}d\nu(x_2)\bigr]d\nu(x_1)$$
$$=(M_{2(k-1)} )^2\int_{K^2} |VDM_k(x_1,x_2,f_3,...,f_k)|^2\prod_{j=3}^ke^{-2(k-1)Q(f_j)}\prod_{j=1}^2e^{-2(k-1)Q(x_j)}d\nu(x_2)d\nu(x_1).$$
Continuing the process and using $M_{2(k-1)}^{1/2k}\to 1$ gives the result.

\end{proof}

Given {\it any} $Q\in \AA(K)$, we can always find a finite measure satisfying the important conclusion of Proposition \ref{weightedtd}.

\begin{proposition}\label{7to4} Let $K\subset \R^n$ be compact and not log-polar and let $Q\in \mathcal A(K)$. Then there exists a finite measure $\mu$ on $K$ such that 
\begin{equation}\label{in7to4}\lim_{k\to \infty} (Z_k^Q(K,\mu))^{1/k(k-1)}=\exp{(-V_w)}=\delta^{Q}(K).\end{equation}
We can even construct $\mu$ so that, in addition, $\mu$ is a (strong) Bernstein-Markov measure for $K$.
\end{proposition}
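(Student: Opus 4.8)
The plan is to build $\mu$ as a sum $\mu=\mu_1+\nu_0$ of two pieces that do not interfere: a purely atomic measure $\mu_1$ carried by weighted Fekete points, whose sole purpose is to force $\liminf_k (Z_k^Q(K,\mu))^{1/k(k-1)}\ge \exp(-V_w)$, together with a strong Bernstein--Markov measure $\nu_0$ for $K$ (which exists by Remark \ref{strongbm}), which is adjoined only so that $\mu$ itself is a strong Bernstein--Markov measure. Note first that the matching \emph{upper} bound is automatic for \emph{every} finite positive measure $\sigma$ on $K$: since $|VDM_k^Q|^2\le \delta_k^Q(K)^{k(k-1)}$ pointwise on $K^k$, we get $Z_k^Q(K,\sigma)\le \delta_k^Q(K)^{k(k-1)}\,\sigma(K)^k$, and taking $1/k(k-1)$-th powers, letting $k\to\infty$, and using $\delta_k^Q(K)\to\exp(-V_w)$ from item 2 of Theorem \ref{sec3} gives $\limsup_k (Z_k^Q(K,\sigma))^{1/k(k-1)}\le \exp(-V_w)$. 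Hence the only real content is the lower bound.

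For the lower bound, for each $k\ge 2$ I would fix a configuration $F_k=(f_1^{(k)},\dots,f_k^{(k)})$ of weighted Fekete points of order $k$ for $K,Q$. Since $\{z\in K:Q(z)<\infty\}$ is non-log-polar it is infinite, so the maximal value $\delta_k^Q(K)^{k(k-1)}$ of $|VDM_k^Q|^2$ on $K^k$ is strictly positive and any maximizer consists of $k$ \emph{distinct} points of $\{Q<\infty\}$; in particular $|VDM_k^Q(F_k)|^2=\delta_k^Q(K)^{k(k-1)}>0$. Put $\mu_1:=\sum_{k\ge 2} k^{-3}\sum_{j=1}^k\delta_{f_j^{(k)}}$, which is a finite positive measure because $\mu_1(K)=\sum_{k\ge2}k^{-2}<\infty$. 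Restricting the $k$-fold integral defining $Z_k^Q(K,\mu_1)$ to the $k!$ tuples obtained by permuting the order-$k$ Fekete points --- legitimate since the integrand is nonnegative --- and using the symmetry of $|VDM_k^Q|$ in its arguments yields
$$Z_k^Q(K,\mu_1)\ \ge\ k!\,(k^{-3})^k\,|VDM_k^Q(F_k)|^2\ =\ k!\,k^{-3k}\,\delta_k^Q(K)^{k(k-1)},$$
so $(Z_k^Q(K,\mu_1))^{1/k(k-1)}\ge (k!)^{1/k(k-1)}\,k^{-3/(k-1)}\,\delta_k^Q(K)$; the first two factors tend to $1$, whence $\liminf_k (Z_k^Q(K,\mu_1))^{1/k(k-1)}\ge \exp(-V_w)$. (Any weights $c_k>0$ with $\sum_k kc_k<\infty$ and $c_k^{1/k}\to1$ serve equally well in place of $k^{-3}$.)

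Finally I would set $\mu:=\mu_1+\nu_0$. It is a finite measure, and since $\mu\ge\mu_1$ as positive measures and $|VDM_k^Q|^2\ge 0$ we have $Z_k^Q(K,\mu)\ge Z_k^Q(K,\mu_1)$; combining this with the automatic upper bound gives $\lim_k (Z_k^Q(K,\mu))^{1/k(k-1)}=\exp(-V_w)=\delta^Q(K)$, which is (\ref{in7to4}). That $\mu$ is again a strong Bernstein--Markov measure for $K$ follows from the second item of Remark \ref{bmprop}: for any continuous weight $v$ on $K$, $(K,\nu_0,v)$ satisfies a weighted Bernstein--Markov property, and adjoining the finite measure $\mu_1$ preserves this, so $(K,\mu,v)$ does too. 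I do not expect a genuine obstacle; the only points demanding care are the bookkeeping in the lower bound (keeping just the permuted order-$k$ tuples, and choosing the atom weights $c_k$ to decay slower than any geometric rate while $\sum_k kc_k<\infty$), and the observation that, since continuity of $Q$, Proposition \ref{weightedtd}, and Theorem \ref{blooms} are never invoked, merely lower semicontinuous admissible weights are covered at no extra cost.
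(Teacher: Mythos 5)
Your argument is correct, and it takes a genuinely different route from the paper. The paper constructs $\mu$ via Lusin's theorem: it chooses compact sets $K_m\uparrow$ with $\mu_{K,Q}(K\setminus K_m)\le 1/m$ on which $Q$ is continuous, equips each $K_m$ with a strong Bernstein--Markov measure $\mu_m$, sets $\mu=\sum_m 2^{-m}\mu_m$ (plus a strong Bernstein--Markov measure $\nu$ for $K$, exactly as you add $\nu_0$), and gets the lower bound from $Z_k^Q(K,\mu)\ge Z_k^{Q_m}(K_m,2^{-m}\mu_m)$, whose asymptotics are supplied by Proposition \ref{weightedtd} (hence by Theorem \ref{blooms}, since $Q|_{K_m}$ is continuous), followed by the monotone-convergence step $I^{Q_m}(\theta_m)\to I^Q(\mu_{K,Q})$. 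You instead plant atoms of subexponentially small mass at weighted Fekete configurations of every order, so that a single permuted Fekete tuple already forces $\liminf_k(Z_k^Q)^{1/k(k-1)}\ge\lim_k\delta_k^Q(K)=\exp(-V_w)$ by item 2 of Theorem \ref{sec3}; the matching upper bound is the same trivial estimate used in the paper, and the strong Bernstein--Markov part is handled identically via Remark \ref{bmprop}. Your route is shorter and more elementary for the first assertion, since it bypasses Lusin's theorem, the dominated/monotone convergence bookkeeping, Proposition \ref{weightedtd} and Theorem \ref{blooms} (though note that the existence of $\nu_0$ for the ``strong Bernstein--Markov'' refinement still rests on Remark \ref{strongbm}, hence on Theorem \ref{blooms}, in both proofs); the small details you flag --- positivity of $\delta_k^Q(K)$ from non-log-polarity of $\{Q<\infty\}$, distinctness of maximizing configurations, $(k!)^{1/k(k-1)}\to1$ and $c_k^{1/(k-1)}\to1$ --- all check out. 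What the paper's construction buys is not extra strength for this proposition but economy of tools later: the same Lusin-set approximation reappears in Lemma \ref{lemma-scal} and Corollary \ref{corapprox}, so the paper's proof doubles as a template for those arguments, whereas your Fekete-atom measure is tailored to this single statement.
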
 

\begin{proof} Consider the weighted equilibrium measure $\mu_{K,Q}$. We have $V_w=I(\mu_{K,Q})<\infty$. By Lusin's continuity theorem, for every integer $m> 1$, there exists a compact subset $K_{m}$ of $K$ such that $\mu_{K,Q}(K\setminus K_{m})\leq 1/m$ and $Q$ (considered as a function on $K_{m}$ only) is continuous on $K_{m}$. We may assume that each $K_m$ is not log-polar and that the sets $K_{m}$ are increasing as $m$ tends to infinity.  Let $\mu_m\in \mathcal M(K_m)$ be a (strong) Bernstein-Markov measure for $K_m$. We claim that $\mu=\sum_{m=1}^{\infty} \frac{1}{2^m}\mu_m$ satisfies (\ref{in7to4}).

Since $\mu_m\in \mathcal M(K_m)$ is a (strong) Bernstein-Markov measure for $K_m$ and $Q_m:=Q|_{K_m}$ is continuous, it also follows from 1. of Remark \ref{bmprop} that $(K_m,Q_m,\frac{1}{2^m}\mu_m)$ satisfies a weighted Bernstein-Markov property. 

Since $Z_k^Q(K,\mu)\leq \max_{{\bf x}\in K^k}|VDM^Q({\bf x})|^2\mu(K)^k$, we have
$$\limsup_{k\to \infty} (Z_k^Q(K,\mu))^{1/k(k-1)}\leq \delta^{Q}(K).$$
To show
$$\liminf_{k\to \infty} (Z_k^Q(K,\mu))^{1/k(k-1)}\geq \delta^{Q}(K),$$
let $\lambda_m:= \mu_{K,Q}(K_m)$ so that $\lambda_m\uparrow 1$. Letting 
$\theta_m:=\frac{1}{\lambda_m} \mu_{K,Q}|_{K_m}\in \mathcal M(K_m)$, we have 
$$I^{Q_m}(\theta_m)\geq I^{Q_m}(\mu_{K_m,Q_m}).$$
Since $(K_m,Q_m,\frac{1}{2^m}\mu_m)$ satisfies a weighted Bernstein-Markov property, 
$$\exp{(- I^{Q_m}(\mu_{K_m,Q_m}))}=\lim_{k\to \infty}(Z_k^{Q_m}(K_m,\frac{1}{2^m}\mu_m))^{1/k(k-1)}.$$
Clearly 
$$Z_k^Q(K,\mu)\geq Z_k^{Q_m}(K_m,\mu|_{K_m})\geq Z_k^{Q_m}(K_m,\frac{1}{2^m}\mu_m).$$
Thus
$$\liminf_{k\to \infty} (Z_k^Q(K,\mu))^{1/k(k-1)}\geq \liminf_{k\to \infty}  (Z_k^{Q_m}(K_m,\frac{1}{2^m}\mu_m))^{1/k(k-1)}$$
$$=\exp{(- I^{Q_m}(\mu_{K_m,Q_m}))}\geq \exp{(-I^{Q_m}(\theta_m))}.$$
By monotone convergence we have
$$\lim_{m\to \infty} I^{Q_m}(\theta_m)= I^Q(\mu_{K,Q})$$
so that 
$$\liminf_{k\to \infty} (Z_k^Q(K,\mu))^{1/k(k-1)}\geq \exp{-I^Q(\mu_{K,Q})}=\delta^{Q}(K),$$
as desired.

For the second part, let $\nu$ be a (strong) Bernstein-Markov measure for $K$ and define 
$$\mu:= \sum_{m=1}^{\infty} 2^{-m}\mu_m + \nu.$$ 
The fact that $\mu$ is a (strong) Bernstein-Markov measure for $K$ follows from the fact that $\nu$ is a (strong) Bernstein-Markov  measure for $K$ and 2. of Remark \ref{bmprop}. Finally, $\mu$ satisfies (\ref{in7to4}) from the previous part applied to $\sum_{m=1}^{\infty} 2^{-m}\mu_m$ and the obvious inequality $Z_k^Q(K,\mu) \geq Z_k^Q(K,\sum_{m=1}^{\infty} 2^{-m}\mu_m)$.
\end{proof}

\begin{example} If $\mu$ is a (strong) Bernstein-Markov measure for $K$ and the set of points of discontinuity of $Q\in \mathcal A(K)$ is of $\mu$ measure zero, then (\ref{in7to4}) holds for $\mu$. As a simple but illustrative example, let $K=[-1,1]\subset \R$ and take 
$$Q(x)=0 \ \hbox{at all}  \ x\in [-1,1]\setminus \{0\}; \ Q(0)=-1.$$
It is easy to see that Lebesgue measure $d\mu$ on $[-1,1]$ satisfies (\ref{in7to4}) but $(K,Q,\mu)$ does not satisfy the weighted Bernstein-Markov property. On the other hand, $(K,Q,\mu+\delta_0)$ does satisfy the weighted Bernstein-Markov property.\end{example}

For $Q\in \mathcal A(K)$ and $\nu$ a finite measure on $K$, 
we define a probability measure $Prob_k$ on $K^{k}$: for a Borel set $A\subset K^{k}$,
\begin{equation}\label{probk}Prob_k(A):=\frac{1}{Z^Q_k}\cdot \int_A  |VDM_k^Q({\bf X_k})|^2  d\nu({\bf X_k}).
\end{equation}
Directly from Proposition \ref{7to4} and (\ref{probk}) we obtain the following estimate.

\begin{corollary} \label{johansson} 
Let $Q\in \mathcal A(K)$ and $\nu$ a finite measure on $K$ satisfying 
$$\lim_{k\to \infty} (Z_k^Q(K,\nu))^{1/k(k-1)}=\exp{(-V_w)}=\delta^{Q}(K).$$
Given $\eta >0$, define
 \begin{equation}\label{aketa}
 A_{k,\eta}:=\{{\bf X_k}\in K^{k}: |VDM_k^Q({\bf X_k})|^2 \geq 
 ( \delta^Q(K) -\eta)^{k(k-1)}\}.
 \end{equation}
Then there exists $k^*=k^*(\eta)$ such that for all $k>k^*$, 
$$Prob_k(K^{k}\setminus A_{k,\eta})\leq \Big(1-\frac{\eta}{2 \exp{(-V_w)}}\Big)^{k(k-1)}\nu(K^{k}). 
$$
\end{corollary}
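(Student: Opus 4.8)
The plan is to estimate the probability of the complement of $A_{k,\eta}$ directly from the definition \eqref{probk} by bounding the integrand pointwise on $K^k\setminus A_{k,\eta}$ and then controlling the normalizing constant $Z_k^Q$ from below using the hypothesis.

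First I would observe that, by the very definition \eqref{aketa} of $A_{k,\eta}$, on the complement $K^k\setminus A_{k,\eta}$ we have the pointwise bound $|VDM_k^Q({\bf X_k})|^2 < (\delta^Q(K)-\eta)^{k(k-1)}$. Hence, integrating over $K^k\setminus A_{k,\eta}$,
$$\int_{K^k\setminus A_{k,\eta}}|VDM_k^Q({\bf X_k})|^2\,d\nu({\bf X_k})\leq (\delta^Q(K)-\eta)^{k(k-1)}\,\nu(K)^k,$$
where I use that $\nu(K^k)=\nu(K)^k$ is finite. Dividing by $Z_k^Q$ gives
$$Prob_k(K^k\setminus A_{k,\eta})\leq \frac{(\delta^Q(K)-\eta)^{k(k-1)}}{Z_k^Q}\,\nu(K)^k.$$

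Next I would use the hypothesis $\lim_{k\to\infty}(Z_k^Q)^{1/k(k-1)}=\exp(-V_w)=\delta^Q(K)$: given any $\eta>0$, for all $k$ large enough (say $k>k^*(\eta)$) we have $(Z_k^Q)^{1/k(k-1)}\geq \delta^Q(K)-\eta/2$, i.e. $Z_k^Q\geq (\delta^Q(K)-\eta/2)^{k(k-1)}$. Substituting, and writing $\delta^Q(K)=\exp(-V_w)$,
$$Prob_k(K^k\setminus A_{k,\eta})\leq \left(\frac{\delta^Q(K)-\eta}{\delta^Q(K)-\eta/2}\right)^{k(k-1)}\nu(K)^k\leq \left(\frac{\exp(-V_w)-\eta}{\exp(-V_w)-\eta/2}\right)^{k(k-1)}\nu(K)^k.$$
Finally, since $\dfrac{\exp(-V_w)-\eta}{\exp(-V_w)-\eta/2}\leq 1-\dfrac{\eta/2}{\exp(-V_w)-\eta/2}\leq 1-\dfrac{\eta}{2\exp(-V_w)}$ (the denominator $\exp(-V_w)-\eta/2$ being at most $\exp(-V_w)$), and noting $\nu(K)^k=\nu(K^k)$, we obtain the claimed bound. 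There is no real obstacle here — the only mild point to watch is handling the case $\eta\geq\exp(-V_w)$ (where $A_{k,\eta}$ is all of $K^k$ and the statement is trivial, since the right-hand side is then nonpositive or the set complement is empty, so one may harmlessly assume $\eta<\exp(-V_w)$) and making sure the comparison of the two geometric-type factors is in the right direction, which is the step I would be most careful to state cleanly.
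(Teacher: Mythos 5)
Your proof is correct and is exactly the argument the paper intends: the paper states this corollary as following ``directly'' from the $Z_k^Q$ asymptotics and the definition (\ref{probk}), i.e., bound the integrand by $(\delta^Q(K)-\eta)^{k(k-1)}$ off $A_{k,\eta}$ and bound $Z_k^Q$ below by $(\delta^Q(K)-\eta/2)^{k(k-1)}$ for large $k$, which is precisely your computation. Your attention to the trivial case $\eta\geq\delta^Q(K)$ and to the direction of the final ratio comparison is sound and adds nothing that conflicts with the paper.
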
	
	
	We get the induced product probability measure ${\bf P}$ on the space of arrays on $K$, 
	$$\chi:=\{X=\{{\bf X_{k}}\in K^{k}\}_{k\geq 1}\},$$ 
	namely,
	$$(\chi,{\bf P}):=\prod_{k=1}^{\infty}(K^{k},Prob_k).$$
As an immediate consequence of the Borel-Cantelli lemma and 3. of Theorem \ref{sec3}, we obtain: 

\begin{corollary}\label{416} Let $Q\in \mathcal A(K)$ and $\nu$ a finite measure on $K$ satisfying $$\lim_{k\to \infty} (Z_k^Q(K,\nu))^{1/k(k-1)}=\exp{(-V_w)}=\delta^{Q}(K).$$  
For ${\bf P}$-a.e. array $X=\{x_j^{(k)}\}_{j=1,...,k; \ k=2,3,...}\in \chi$, 
$$
\frac{1}{k}\sum_{j=1}^k\delta_{x_j^{(k)}}\to \mu_{K,Q} \ \hbox{weakly as }
k\to\infty.$$
\end{corollary}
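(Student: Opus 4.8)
The plan is to combine the quantitative probability estimate of Corollary \ref{johansson} with the first Borel–Cantelli lemma and then invoke part 3 of Theorem \ref{sec3}. First I would fix $\eta>0$ and apply Corollary \ref{johansson}: since $(K,\nu,Q)$ need not itself satisfy a weighted Bernstein-Markov property, we are using the hypothesis that $\nu$ is a finite measure with $\lim_{k\to\infty}(Z_k^Q(K,\nu))^{1/k(k-1)}=\exp(-V_w)$, which is exactly what Corollary \ref{johansson} requires. That corollary gives, for all $k>k^*(\eta)$,
$$
Prob_k\bigl(K^k\setminus A_{k,\eta}\bigr)\leq \Bigl(1-\tfrac{\eta}{2\exp(-V_w)}\Bigr)^{k(k-1)}\nu(K)^k.
$$
The right-hand side is summable over $k$: the base $1-\tfrac{\eta}{2\exp(-V_w)}$ is a constant strictly less than $1$, so the factor $(1-\cdots)^{k(k-1)}$ decays like $e^{-ck^2}$, which dominates the geometric growth $\nu(K)^k$. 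Hence $\sum_k Prob_k(K^k\setminus A_{k,\eta})<\infty$.

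Next I would pass to the product space $(\chi,{\bf P})=\prod_k(K^k,Prob_k)$. By the first Borel–Cantelli lemma applied to the events $E_{k,\eta}:=\{X\in\chi:{\bf X_k}\notin A_{k,\eta}\}$, for ${\bf P}$-a.e. array $X$ we have ${\bf X_k}\in A_{k,\eta}$ for all sufficiently large $k$; that is,
$$
\liminf_{k\to\infty}|VDM_k^Q({\bf X_k})|^{2/k(k-1)}\geq \delta^Q(K)-\eta
$$
for ${\bf P}$-a.e. $X$. Taking $\eta=1/m$, $m=1,2,\dots$, and intersecting the corresponding full-measure sets (a countable intersection, hence still of full ${\bf P}$-measure), we conclude that for ${\bf P}$-a.e. array $X$,
$$
\liminf_{k\to\infty}|VDM_k^Q({\bf X_k})|^{2/k(k-1)}\geq \delta^Q(K).
$$
On the other hand, the reverse inequality $\limsup_{k\to\infty}|VDM_k^Q({\bf X_k})|^{2/k(k-1)}\leq \delta^Q(K)=\exp(-V_w)$ holds for \emph{every} array, either directly from part 2 of Theorem \ref{sec3} (the $k$-th weighted diameter converges to $\exp(-V_w)$, so $|VDM_k^Q|^{2/k(k-1)}\leq \delta_k^Q(K)\to\exp(-V_w)$) or from part 1 applied along weakly convergent subsequences together with $I^Q(\mu)\geq V_w$. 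Combining the two bounds gives $\lim_{k\to\infty}|VDM_k^Q({\bf X_k})|^{2/k(k-1)}=\exp(-V_w)$ for ${\bf P}$-a.e. $X$.

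Finally I would invoke part 3 of Theorem \ref{sec3}: any array $X=\{x_j^{(k)}\}$ whose normalized weighted Vandermondes satisfy $|VDM_k^Q({\bf X_k})|^{2/k(k-1)}\to\exp(-V_w)$ automatically has $\frac1k\sum_{j=1}^k\delta_{x_j^{(k)}}\to\mu_{K,Q}$ weakly. Applying this to the full-${\bf P}$-measure set of arrays found above yields the claimed almost sure weak convergence. The only step requiring a little care is the summability of the Borel–Cantelli series, but this is immediate once one notices the speed $k(k-1)$ in the exponent beats the speed $k$ in $\nu(K)^k$; there is no real obstacle, as all the substantive work (the $L^2$-normalization-constant asymptotics and the characterization of asymptotically Fekete arrays) has already been done in Propositions \ref{weightedtd}, \ref{7to4} and Theorem \ref{sec3}.
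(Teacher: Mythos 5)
Your argument is correct and is exactly the route the paper intends: Corollary \ref{johansson} plus the first Borel--Cantelli lemma (with $\eta=1/m$ and a countable intersection), the trivial upper bound from the weighted diameters in Theorem \ref{sec3}, and then part 3 of Theorem \ref{sec3} to conclude weak convergence to $\mu_{K,Q}$. The paper presents this as an immediate consequence of Borel--Cantelli and Theorem \ref{sec3}, so your write-up is simply a detailed version of the same proof.
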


\section{Approximation of probability measures}
For the proof  of a large deviation principle (LDP) in $\R^n$, as in \cite{VELD}, we will need to approach general measures in $\MM(K)$ by weighted equilibrium measures. For that, we consider equilibrium problems with weights that are the negatives of potentials. We first verify that the natural candidate solution to such a problem is, indeed, the true solution.
\begin{lemma}\label{lem-non-adm}
Let $\mu\in\MM(K)$, $K\subset \R^n$ compact, $I(\mu)<\infty$. Consider the possibly non-admissible weight $u:=-U^{\mu}$ on $K$. The weighted minimal energy on $K$ is obtained with the measure $\mu$, that is
$$\forall\nu\in\MM(K),\quad I(\mu)+2\int ud\mu\leq I(\nu)+2\int ud\nu,$$
with equality if and only if $\nu=\mu$. 
\end{lemma}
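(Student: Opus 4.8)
The plan is to reduce this to the unweighted inequality from Item~1 of Theorem~\ref{basics}. Observe first that for $u = -U^\mu$ on $K$, the weighted energy of any $\nu \in \MM(K)$ unfolds as
\[
I(\nu) + 2\int u\, d\nu = I(\nu) - 2\int U^\mu\, d\nu = \int\!\!\int \log\frac{1}{|x-y|}\,d\nu(x)d\nu(y) - 2\int\!\!\int \log\frac{1}{|x-y|}\,d\mu(x)d\nu(y),
\]
and similarly the left-hand side $I(\mu) + 2\int u\,d\mu$ equals $-\,\int\!\!\int \log\frac{1}{|x-y|}\,d\mu(x)d\mu(y) = -I(\mu)$ after the same cancellation. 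So the claimed inequality is exactly
\[
-I(\mu) \le I(\nu) - 2\int\!\!\int \log\tfrac{1}{|x-y|}\,d\mu\,d\nu,
\]
i.e.\ $0 \le I(\nu) - 2\,I_{\log}(\mu,\nu) + I(\mu) = I(\nu - \mu)$, where $I(\nu-\mu)$ denotes the (signed) mutual logarithmic energy of the difference. Since $\nu - \mu$ is a signed measure with compact support (both supported in $K$) and total mass zero, and both $\mu$ and $\nu$ have finite energy — for $\mu$ this is the hypothesis $I(\mu) < \infty$, and if $I(\nu) = \infty$ the inequality is trivial so we may assume $I(\nu) < \infty$ — Item~1 of Theorem~\ref{basics} applies and gives $I(\nu - \mu) \ge 0$ with equality iff $\nu = \mu$.

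The one point requiring care is the bilinearity step: expanding $I(\nu-\mu) = I(\nu) - 2\int\!\!\int \log\frac{1}{|x-y|}\,d\mu\,d\nu + I(\mu)$ is legitimate only once all three double integrals are known to be absolutely convergent, or at least unambiguously defined. The energies $I(\mu), I(\nu)$ are finite by assumption (resp.\ the reduction above), and the cross term $\int\!\!\int \log\frac{1}{|x-y|}\,d\mu(x)d\nu(y)$ is finite because, on the compact set $K$, $\log\frac{1}{|x-y|}$ is bounded above, and its negative part is controlled by Cauchy--Schwarz for the energy bilinear form: $\big|\int\!\!\int \log\frac{1}{|x-y|}\,d\mu\,d\nu\big| \le$ a finite quantity once $I(\mu), I(\nu) < \infty$ (this is standard; it follows from the positive-definiteness of the truncated kernel, as in \cite[Lemma 1.8]{ST} / \cite{CKL}). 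So the algebra is justified and the expansion holds.

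I would therefore structure the proof as: (i) reduce to the case $I(\nu) < \infty$; (ii) use $I(\mu) < \infty$ and $I(\nu) < \infty$ together with compactness of $K$ to justify that the cross energy $\int\!\!\int \log\frac{1}{|x-y|}\,d\mu\,d\nu$ is finite and that $I^u$ expands bilinearly; (iii) rewrite $I(\nu) + 2\int u\,d\nu - \big(I(\mu) + 2\int u\,d\mu\big) = I(\nu-\mu)$; (iv) invoke Item~1 of Theorem~\ref{basics} for the signed mass-zero measure $\nu-\mu$ to conclude $\ge 0$, with equality iff $\mu_1 = \mu_2$, i.e.\ $\nu = \mu$. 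The main (mild) obstacle is purely bookkeeping — making sure every integral is well-defined before manipulating it, since $\log\frac{1}{|x-y|}$ changes sign; the conceptual content is entirely contained in the already-cited positivity of logarithmic energy for mass-zero signed measures.
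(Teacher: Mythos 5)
Your proposal is correct and follows essentially the same route as the paper: reduce to $I(\nu)<\infty$, rewrite the difference of weighted energies as $I(\nu)-2I(\mu,\nu)+I(\mu)=I(\nu-\mu)$, and invoke item 1 of Theorem \ref{basics} for the mass-zero signed measure $\nu-\mu$, including the equality case. The extra bookkeeping you supply on the finiteness of the cross term is harmless and consistent with the paper's (more terse) argument.
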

\begin{proof}
We may assume that $I(\nu)<\infty$. The inequality may be rewritten as
$$0\leq I(\nu)-2I(\mu,\nu)+I(\mu)=I(\nu-\mu),$$
which is true, and, moreover, the energy $I(\nu-\mu)$ can vanish only when $\nu=\mu$, see item 1. of Theorem \ref{basics}.
\end{proof}
The following two approximation results are analogous to \cite[Lemma I.6.10]{ST}.
\begin{lemma}\label{lemma-scal}
Let $K\subset \R^n$ be compact and non log-polar and let $\mu\in\MM(K)$.
 Let $Q\in\AA(K)$ be finite $\mu$-almost everywhere. 
There exist an increasing sequence of compact sets $K_m$ in $K$  and a sequence of measures $\mu_m\in \MM(K_m)$ satisfying
\begin{enumerate}
\item the measures $\mu_m$ tend weakly to $\mu$, as $m\to\infty$; 
\item the functions $Q_{|K_m}\in C(K_m)$ and $\int Qd\mu_m$ tend to $\int Qd\mu$ as $m\to\infty$;
\item the energies $I(\mu_m)$ tend to $I(\mu)$ as $m\to\infty$.
\end{enumerate}
\end{lemma}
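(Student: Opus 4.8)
The plan is to construct the compact sets $K_m$ and measures $\mu_m$ explicitly, following the classical recipe of \cite[Lemma I.6.10]{ST} but paying attention to the logarithmic potential theory in $\R^n$. Since $I(\mu)<\infty$ and $Q$ is finite $\mu$-a.e., Lusin's theorem applied to $Q$ (measurable and $\mu$-a.e. finite) gives, for each $m$, a compact set $K_m\subset K$ with $\mu(K\setminus K_m)<1/m$ on which $Q$ restricts to a continuous function; we arrange the $K_m$ to be increasing by replacing $K_m$ with $K_1\cup\cdots\cup K_m$, and we arrange each $K_m$ to be non log-polar (which is automatic once $m$ is large since $\mu(K_m)\to 1$ forces $I(\mu|_{K_m})$ to be finite, hence $K_m$ non log-polar). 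Then set $\lambda_m:=\mu(K_m)\uparrow 1$ and $\mu_m:=\frac{1}{\lambda_m}\mu|_{K_m}\in\MM(K_m)$.

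Next I would verify the three conclusions. For item 1, weak convergence $\mu_m\to\mu$: for any bounded continuous $f$ on $K$,
\[
\Bigl|\int f\,d\mu_m-\int f\,d\mu\Bigr|\leq \frac{1}{\lambda_m}\Bigl|\int_{K_m} f\,d\mu-\lambda_m\int f\,d\mu\Bigr|\leq \frac{1-\lambda_m}{\lambda_m}\|f\|_K+\frac{1}{\lambda_m}\int_{K\setminus K_m}|f|\,d\mu,
\]
and both terms go to $0$. For item 2, $Q_{|K_m}\in C(K_m)$ holds by the Lusin construction; for the convergence of $\int Q\,d\mu_m$, split $\int Q\,d\mu=\int_{K_m}Q\,d\mu+\int_{K\setminus K_m}Q\,d\mu$ and use that $Q$ is bounded below (lower semicontinuous on compact $K$) together with $I(\mu)<\infty$, $2\int Q\,d\mu\leq I^Q(\mu)-I(\mu)$... more directly: since $Q\geq -C$ and $\int Q\,d\mu<\infty$ (as $I(\mu)<\infty$ and $I^Q(\mu)$ is well-defined and finite, or simply because $Q$ is $\mu$-integrable from the hypotheses feeding into $V_w$ being finite), the tail $\int_{K\setminus K_m}Q\,d\mu\to 0$ by dominated/monotone convergence on the negative part and absolute continuity of the integral on the positive part; then $\int Q\,d\mu_m=\frac{1}{\lambda_m}\int_{K_m}Q\,d\mu\to\int Q\,d\mu$. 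For item 3, $I(\mu_m)\to I(\mu)$: write
\[
I(\mu_m)=\frac{1}{\lambda_m^2}\int_{K_m}\int_{K_m}\log\frac{1}{|x-y|}\,d\mu(x)\,d\mu(y),
\]
and the integrand $\log\frac{1}{|x-y|}$, being bounded below on the compact set $K\times K$, has a well-defined (possibly $+\infty$) integral against $\mu\times\mu$ equal to $I(\mu)<\infty$; the restriction to $K_m\times K_m$ increases to $K\times K$, so by monotone convergence (applied after adding a constant to make the integrand nonnegative) $\int_{K_m\times K_m}\log\frac{1}{|x-y|}\,d\mu\,d\mu\to I(\mu)$, and dividing by $\lambda_m^2\to 1$ gives $I(\mu_m)\to I(\mu)$. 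In particular $I(\mu_m)<\infty$ for all large $m$.

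The main obstacle is item 3, specifically ensuring the monotone convergence argument is legitimate despite $\log\frac{1}{|x-y|}$ being unbounded above near the diagonal: the point is that on the fixed compact set $K$ the kernel is bounded \emph{below} by some $-C_K$, so $\log\frac{1}{|x-y|}+C_K\geq 0$, and then $\int_{K_m\times K_m}(\log\frac{1}{|x-y|}+C_K)\,d(\mu\times\mu)$ is a genuinely increasing sequence of nonnegative numbers converging to $\int_{K\times K}(\log\frac{1}{|x-y|}+C_K)\,d(\mu\times\mu)=I(\mu)+C_K<\infty$; subtracting $C_K$ and using $\mu(K_m)\to 1$ handles the constant, and dividing by $\lambda_m^2$ finishes it. (Lower semicontinuity of $\mu\mapsto I(\mu)$ from the principle of descent, item 2 of Theorem \ref{basics}, gives $\liminf I(\mu_m)\geq I(\mu)$ as a cross-check, but here we get the full limit directly.) Everything else is routine bookkeeping, and the only $\R^n$-specific input used is the lower boundedness of the logarithmic kernel on compacts and item 1 of Theorem \ref{basics} for the auxiliary facts; no planar-specific tool is needed.
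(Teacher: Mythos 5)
Your construction coincides with the paper's: Lusin's theorem produces the increasing compacts $K_m$ with $Q|_{K_m}$ continuous and $\mu(K\setminus K_m)\leq 1/m$, the measures $\mu_m$ are the normalized restrictions $\mu|_{K_m}/\mu(K_m)$, and items 2 and 3 are obtained by monotone convergence after making the kernel nonnegative on the compact set $K$ (you add a constant $C_K$; the paper instead rescales $K$ to diameter $<1$ and invokes a generalized dominated convergence theorem with the increasing potentials $U^{\mu|_{K_m}}$ -- your direct monotone convergence on $K\times K$ is an equally valid and slightly more elementary variant). One small correction: the hypotheses ($I(\mu)<\infty$ and $Q$ finite $\mu$-a.e.) do \emph{not} imply $\int Q\,d\mu<\infty$, so your appeal to absolute continuity of the integral for the tail in item 2 is not justified as written; nothing is lost, though, because with $Q\geq -C$ one has $(Q+C)\mathbf{1}_{K_m}\uparrow (Q+C)$ $\mu$-a.e., and monotone convergence gives $\int Q\,d\mu_m\to\int Q\,d\mu$ even when this limit is $+\infty$, which is exactly the paper's argument.
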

\begin{proof} 
By Lusin's continuity theorem, for every integer $m\geq 1$, there exists a compact subset $K_m$ of $K$ such that $\mu(K\setminus K_m)\leq 1/m$ and $Q$ (considered as a function on $K_m$ only) is continuous on $K_m$. We may assume that $K_m$ is increasing as $m$ tends to infinity. Then, the measures $\tilde\mu_m:=\mu_{|K_m}$ are increasing and tend weakly to $\mu$. 
Since $Q$ is bounded below on $K$, the monotone convergence theorem tells us that
$$\int Qd\tilde\mu_m=\int Q_{|K_m}d\mu\to\int Qd\mu,\quad\text{as }m\to\infty.$$
Denoting as usual by $\log^{+}$ and $\log^{-}$ the positive and negative parts of the $\log$ function, we have, as $m\to\infty$,
$$\chi_{m}(z,t)\log^{+}|z-t|\uparrow\log^{+}|z-t|\quad\text{and}\quad
\chi_{m}(z,t)\log^{-}|z-t|\uparrow\log^{-}|z-t|,$$
$(\mu\times\mu)$-almost everywhere on $K\times K$ where $\chi_{m}(z,t)$ is the characteristic function of $K_{m}\times K_{m}$ and we agree that the left-hand sides vanish when $z=t\notin K_{m}$. By monotone convergence, we obtain
$$I(\tilde\mu_m)\to I(\mu),\quad\text{as }m\to\infty.$$
Finally, defining $\mu_m:= \tilde \mu_m/\mu(K_m)$ gives the result.
\end{proof}

\begin{corollary} \label{corapprox} 
Let $K\subset \R^n$ be compact and non log-polar and let $\mu\in\MM(K)$ with $I(\mu)<\infty$. Let $K_m$ be the sequence of increasing compact sets in $K$ and $\mu_m$ the sequence of measures in $\MM(K_m)$ given by Lemma \ref{lemma-scal} with $Q=U^{\mu}$. There exist a sequence of continuous functions $Q_m$ on $K$ such that 
\begin{enumerate}
\item the measures $\mu_m$ tend weakly to $\mu$ and the energies $I(\mu_m)$ tend to $I(\mu)$, as $m\to\infty$; 
\item  the measures $\mu_m$ are equal to the weighted equilibrium measures $\mu_{K,Q_m}$.
\end{enumerate}
\end{corollary}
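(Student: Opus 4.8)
The plan is to combine Lemma~\ref{lemma-scal} with Lemma~\ref{lem-non-adm}. Applying Lemma~\ref{lemma-scal} with the (possibly non-admissible) weight $Q=U^{\mu}$ — which is finite $\mu$-a.e. since $I(\mu)<\infty$ — produces the increasing sequence of non log-polar compacts $K_m\subset K$ and measures $\mu_m\in\MM(K_m)$ with $\mu_m\to\mu$ weakly, $I(\mu_m)\to I(\mu)$, and $\int U^{\mu}d\mu_m\to \int U^{\mu}d\mu$, together with the fact that $U^{\mu}$ restricted to each $K_m$ is continuous. This immediately gives item~1. Note that strictly speaking Lemma~\ref{lemma-scal} is stated for $Q\in\AA(K)$; but its proof uses only that $Q$ is lower semicontinuous and bounded below on $K$ and finite $\mu$-a.e., so it applies verbatim to $Q=U^{\mu}$ (which is superharmonic, hence l.s.c., and bounded below on the compact set $K$ after the harmless rescaling to diameter $<1$), and I would add a sentence to that effect.

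Next I would define the weights. On $K_m$, the natural candidate is $Q_m^{\circ}:=-U^{\mu_m}$, the negative of the potential of $\mu_m$ itself; by Lemma~\ref{lem-non-adm} applied to the compact set $K_m$ (non log-polar, $I(\mu_m)<\infty$), the weighted equilibrium measure for $(K_m,Q_m^{\circ})$ is exactly $\mu_m$, i.e. $\mu_m=\mu_{K_m,Q_m^{\circ}}$. The remaining task is to extend $Q_m^{\circ}$ from $K_m$ to a continuous weight $Q_m$ on all of $K$ without changing the equilibrium measure. Here I would exploit that $U^{\mu_m}$ is continuous on $K_m$ (since $\mu_m$ is supported on $K_m$, which is not log-polar, and has finite energy — actually one gets continuity of $U^{\mu_m}|_{K_m}$ from the continuity principle, Item~4 of Theorem~\ref{basics}, provided $U^{\mu_m}$ is continuous as a function on $\supp\mu_m=K_m$; if this needs care, one can instead invoke that $U^{\mu}$ is continuous on $K_m$ by construction and that $\mu_m$ is close to $\mu_{|K_m}$). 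Granting continuity of $Q_m^{\circ}$ on $K_m$, I use the Tietze extension theorem to get a continuous function $Q_m$ on $K$ with $Q_m=Q_m^{\circ}$ on $K_m$, and I arrange $Q_m\geq Q_m^{\circ}$-values large off $K_m$ — concretely, replace the extension by $\max(Q_m,\,C_m)$ for a constant $C_m$ chosen larger than $\sup_{K_m}(U^{\mu_m}+\text{Robin constant})$ — so that the Frostman inequalities force $S_{w_m}\subset K_m$, whence the equilibrium measure for $(K,Q_m)$ coincides with that for $(K_m,Q_m|_{K_m})=(K_m,Q_m^{\circ})$, namely $\mu_m$. That yields item~2.

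The main obstacle I anticipate is the continuity of the potential $U^{\mu_m}$ on $K_m$ and the bookkeeping that makes the cut-off weight on $K\setminus K_m$ genuinely push the support into $K_m$. The potential $U^{\mu_m}$ need not be globally continuous even though $\mu_m$ has finite energy; one really wants the continuity principle, which requires $U^{\mu_m}$ restricted to $\supp\mu_m$ to be continuous. A clean route around this is to note that in Lemma~\ref{lemma-scal} one is free to shrink $K_m$ further by another application of Lusin's theorem, this time to the Borel function $U^{\mu_m}$ (or $U^{\mu}$), so that after passing to a slightly smaller non log-polar compact the relevant potential is continuous on it; since all the convergences in item~1 are preserved under such shrinking, no generality is lost. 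I would organize the proof so that this ``Lusin twice'' step is done up front, then Lemma~\ref{lem-non-adm} and the Tietze-plus-cutoff construction go through mechanically.
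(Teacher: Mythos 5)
Your first half is fine and matches the paper: Lemma \ref{lemma-scal} applied with $Q=U^{\mu}$ gives item~1 (and your caution about admissibility is unnecessary -- $U^{\mu}$ is lower semicontinuous, being superharmonic, and finite $\mu$-a.e., so $U^{\mu}\in\AA(K)$ and the lemma applies as stated). The genuine gap is exactly at the point you flag: the continuity of $U^{\mu_m}$ on $K_m$, and neither of your remedies works. The continuity principle is, as you note, circular here, and the ``Lusin twice'' device is also circular: if you shrink $K_m$ to a set $K_m'$ on which $U^{\mu_m}$ is continuous, the approximating measure must be replaced by the normalized restriction to $K_m'$, and what you then need is continuity of the potential of that \emph{new} measure on $K_m'$ -- you never catch your tail. (Applying Lusin to $U^{\mu}$ instead is what Lemma \ref{lemma-scal} already did and does not by itself say anything about $U^{\mu_m}$.) The missing observation, which is how the paper argues, is that continuity of $U^{\mu_m}|_{K_m}$ is automatic: writing $\mu_m=\lambda_m^{-1}\mu|_{K_m}$, one has $U^{\mu_m}=\lambda_m^{-1}\bigl(U^{\mu}-U^{\mu|_{K\setminus K_m}}\bigr)$, which on $K_m$ is (continuous by Lusin) minus (lower semicontinuous), hence upper semicontinuous there; since $U^{\mu_m}$ is globally lower semicontinuous, it is continuous on $K_m=\supp\mu_m$. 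The continuity principle (item 4 of Theorem \ref{basics}) then makes $U^{\mu_m}$ continuous on all of $\R^n$.

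Once this is in place, your Tietze-plus-cutoff construction is superfluous and, as written, defective: the paper simply takes $Q_m:=-U^{\mu_m}|_K$, which is already continuous on $K$, and applies Lemma \ref{lem-non-adm} on $K$ itself (with the measure $\mu_m\in\MM(K)$) to get $\mu_m=\mu_{K,Q_m}$ -- no extension and no support-pushing are needed. Your cutoff $\max(Q_m,C_m)$ with $C_m$ large is moreover internally inconsistent: taken globally it overwrites the values of $Q_m$ on $K_m$ (destroying $Q_m=-U^{\mu_m}$ there), while taken only off $K_m$ it destroys continuity at $\partial K_m$; and the claim that largeness off $K_m$ ``forces $S_{w_m}\subset K_m$'' via Frostman needs an argument you do not give. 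If you insist on an extension route, the clean statement is: any continuous $Q_m$ on $K$ with $Q_m=-U^{\mu_m}$ on $K_m$ and $Q_m\geq-U^{\mu_m}$ on $K\setminus K_m$ works, since then for every $\nu\in\MM(K)$, $I^{Q_m}(\nu)\geq I(\nu)-2\int U^{\mu_m}d\nu=I(\nu-\mu_m)-I(\mu_m)\geq -I(\mu_m)=I^{Q_m}(\mu_m)$, with equality only for $\nu=\mu_m$ -- no largeness of the extension is required. But with the continuity of $U^{\mu_m}$ established as above, the direct choice $Q_m=-U^{\mu_m}|_K$ is simpler.
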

\begin{proof} First, note that $U^{\mu}\in\AA(K)$ and is finite $\mu$-almost everywhere since $I(\mu)<\infty$, so that Lemma \ref{lemma-scal} applies with $Q=U^{\mu}$. Now we define 
$$Q_m:=-U^{\mu_m}|_K=-\mu(K_m)^{-1}U^{\tilde\mu_m}|_K.$$ Since $Q=U^{\mu}$ is continuous on $K_m$, it follows that $Q_m$ is continuous on $K_m$. By the continuity principle for logarithmic potentials (4. of Theorem \ref{basics}), $-U^{\mu_m}$ is continuous on $\R^n$ and hence $Q_m$ is continuous 
on $K$. Items 1. and 2. follow from Lemmas \ref{lemma-scal} and \ref{lem-non-adm} respectively.
\end{proof}

\section{The $J^{Q}$ functionals on $\R^n$}\label{J-funct}
 In this section, we introduce and establish the main properties of the weighted
$L^2$ functionals 
$\overline J^{Q},\underline J^{Q}$ as well as the relation with the weighted energy $I^Q$.  Our goal is to establish an LDP in the next section.

Fix a compact set $K$ in $\R^{n}$, 
a measure
 $\nu$ in ${\mathcal M}(K)$ and $Q\in\AA(K)$.
We recall that $\MM(K)$ endowed with the weak topology is a Polish space, i.e., a separable complete metrizable space.
Given $G\subset {\mathcal M}(K)$, for each $k=1,2,...$ we let 
\begin{equation}\label{nbhddef}
\tilde G_k:=\{{\bf a} =(a_{1},...,a_{k})\in K^{k},~
\frac{1}{k}\sum_{j=1}^{{k}} \delta_{a_{j}}\in G\},
\end{equation}
and set
\begin{equation}\label{jkqmu}
J^Q_k(G):=\Big[\int_{\tilde G_{k}}|VDM^Q_k({\bf a})|^{2}d\nu ({\bf a})\Big]
^{1/k(k-1)}.
\end{equation}
\begin{definition} \label{jwmuq} For $\mu \in \mathcal M(K)$ we define
$$\overline J^Q(\mu):=\inf_{G \ni \mu} \overline J^Q(G) \ \hbox{where} \ \overline J^Q(G):=\limsup_{k\to \infty} J^Q_k(G);$$
$$\underline J^Q(\mu):=\inf_{G \ni \mu} \underline J^Q(G) \ \hbox{where} \ \underline J^Q(G):=\liminf_{k\to \infty} J^Q_k(G);$$
\end{definition}

\noindent Here the infima are taken over all neighborhoods $G$ of the measure $\mu$ in ${\mathcal M}(K)$. 
Note that
, a priori, $\overline J^{Q},\underline J^{Q}$ depend on $\nu$. For the unweighted case $Q=0$, we simply write $\overline J$ and $\underline J$.
 \begin{lemma}
The functionals $\underline J(\mu)$, $\overline J(\mu)$, $\underline J^Q(\mu)$, $\overline J^Q(\mu)$, are upper semicontinuous  on ${\mathcal M}(K)$ in the weak topology. 
\end{lemma}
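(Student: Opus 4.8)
The plan is to show upper semicontinuity by exhibiting each of these functionals as an infimum of continuous (in fact, constant) functions of the point $\mu$. Recall from Definition \ref{jwmuq} that, for instance,
$$\overline J^Q(\mu)=\inf_{G\ni\mu}\overline J^Q(G),$$
where the infimum is over all open neighborhoods $G$ of $\mu$ in $\MM(K)$, and $\overline J^Q(G)=\limsup_{k\to\infty}J^Q_k(G)$ depends only on the set $G$, not on the particular point $\mu\in G$. The key observation is that for a \emph{fixed} open set $G$, the quantity $\overline J^Q(G)$ gives, via $\mu\mapsto \overline J^Q(G)$ for $\mu\in G$, a function that is constant on $G$ and hence lower semicontinuous there; and an infimum of lower semicontinuous functions... — wait, that is the wrong direction. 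Let me instead argue directly from the definition of upper semicontinuity.

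First I would fix $\mu_0\in\MM(K)$ and a real number $c$ with $\overline J^Q(\mu_0)<c$. By definition of the infimum over neighborhoods, there exists an open neighborhood $G_0$ of $\mu_0$ with $\overline J^Q(G_0)<c$. Now for \emph{every} $\mu\in G_0$, the set $G_0$ is itself an open neighborhood of $\mu$, so that
$$\overline J^Q(\mu)=\inf_{G\ni\mu}\overline J^Q(G)\leq \overline J^Q(G_0)<c.$$
Thus $G_0$ is an open neighborhood of $\mu_0$ on which $\overline J^Q<c$, which is precisely the statement that $\{\mu:\overline J^Q(\mu)<c\}$ is open, i.e., that $\overline J^Q$ is upper semicontinuous. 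The argument is verbatim the same for $\underline J^Q$ (replacing $\limsup$ by $\liminf$ in $\overline J^Q(G)\rightsquigarrow\underline J^Q(G)$), and for $\underline J$, $\overline J$ by taking $Q=0$; the only property of the set-functions $G\mapsto\overline J^Q(G)$, $G\mapsto\underline J^Q(G)$ that is used is that they are monotone, i.e., $G\subset G'\Rightarrow \overline J^Q(G)\leq\overline J^Q(G')$ and similarly for $\underline J^Q$, which is immediate since $\tilde G_k\subset\tilde G'_k$ forces $J^Q_k(G)\leq J^Q_k(G')$.

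There is essentially no obstacle here: the statement is a soft, formal consequence of the definition of these functionals as infima over neighborhoods, and the proof does not use any potential theory, the Bernstein-Markov property, or any quantitative estimate on $VDM^Q_k$. The one point worth a sentence of care is making sure that "neighborhood of $\mu$" in Definition \ref{jwmuq} may be taken to mean "open set containing $\mu$" (so that $G_0$ qualifies as a neighborhood of each of its points), which is a standard convention and is consistent with $\MM(K)$ being metrizable. If instead neighborhoods are not assumed open, one simply replaces $G_0$ above by its interior, which still contains $\mu_0$ and still satisfies $\overline J^Q(\operatorname{int}G_0)\leq\overline J^Q(G_0)<c$ by monotonicity.
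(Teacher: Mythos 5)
Your argument is correct: since each functional is defined as an infimum over neighborhoods of a monotone set-function, the strict sublevel set $\{\mu:\overline J^Q(\mu)<c\}$ is open by exactly the reasoning you give (any neighborhood $G_0$ witnessing $\overline J^Q(G_0)<c$ serves, via its interior, as a neighborhood of each of its points), and this is the same soft, purely formal argument as the paper's, which simply defers to the cited Lemma 3.1 of \cite{BL} and uses no potential theory or Bernstein--Markov input. The brief false start in your write-up is harmlessly self-corrected, so nothing further is needed.
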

\begin{proof}
The proof is similar to the one of \cite[Lemma 3.1]{BL}.
\end{proof}
\begin{lemma}\label{lem-J-JQ}
The following properties hold (and with the $\underline J,\underline J^Q$ 
functionals as well): 
\begin{enumerate}
\item $\overline J^Q(\mu)\leq e^{-I^Q(\mu)}$ for $Q\in \AA(K)$;
\item $ 
\overline J^{Q}(\mu)\leq\overline J(\mu)\cdot e^{-2\int_K Qd\mu}$  for $Q\in \AA(K)$; 
\item 
$ \overline J^{Q}(\mu)=\overline J(\mu)\cdot e^{-2\int_K Qd\mu}$  for {\bf $Q$ {continuous}}. 
\end{enumerate}
\end{lemma}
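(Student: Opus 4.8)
The three assertions concern the weighted $L^2$ functional $\overline J^Q$ (and $\underline J^Q$), and the strategy is to reduce everything to the interplay between the weighted Vandermonde factorization (\ref{VDM-Q}) and the asymptotic upper bound (\ref{upboundVDM}) from Theorem \ref{sec3}. First I would prove item 1. Fix $\mu$ and a neighborhood $G$ of $\mu$; for any array $(a_1,\dots,a_k)\in\tilde G_k$ the measures $\frac1k\sum\delta_{a_j}$ lie in $G$, so by choosing $G$ small and applying (\ref{upboundVDM}) together with upper semicontinuity of $I^Q$ (more precisely, the standard argument that $\limsup$ over shrinking neighborhoods of the pointwise bound $e^{-I^Q(\cdot)}$ is controlled by $e^{-I^Q(\mu)}$ after a $\liminf$/$\limsup$ exchange), one gets $J^Q_k(G)\le (\text{const})\cdot\sup_{\tilde G_k}|VDM^Q_k|^{2/k(k-1)}\le e^{-I^Q(\mu)+o(1)}$ uniformly; since $\nu(K)^{1/k(k-1)}\to1$ and $\nu$ is finite, taking $\limsup_k$ then $\inf_G$ yields $\overline J^Q(\mu)\le e^{-I^Q(\mu)}$. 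The same computation with $\liminf_k$ gives the statement for $\underline J^Q$.

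For item 2, the key is the factorization $|VDM^Q_k({\bf a})| = |VDM_k({\bf a})|\cdot\prod_{j=1}^k w(a_j)^{k-1}$, so
$$J^Q_k(G) = \Big[\int_{\tilde G_k}|VDM_k({\bf a})|^2\prod_{j=1}^k e^{-2(k-1)Q(a_j)}\,d\nu({\bf a})\Big]^{1/k(k-1)}.$$
On $\tilde G_k$ the empirical measure $\frac1k\sum\delta_{a_j}$ lies in $G$, so by lower semicontinuity of $Q$ one has $\frac1k\sum_j Q(a_j)\ge \inf_{\sigma\in G}\int Q\,d\sigma - o(1)$; hence $\prod_j e^{-2(k-1)Q(a_j)} \le \exp\big(-2k(k-1)[\inf_{\sigma\in G}\int Q\,d\sigma - o(1)]\big)$ on $\tilde G_k$. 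Pulling this factor out of the integral gives
$$J^Q_k(G) \le e^{-2\inf_{\sigma\in G}\int Q\,d\sigma + o(1)}\cdot\Big[\int_{\tilde G_k}|VDM_k({\bf a})|^2\,d\nu({\bf a})\Big]^{1/k(k-1)} = e^{-2\inf_{\sigma\in G}\int Q\,d\sigma + o(1)}\cdot J_k(G).$$
Taking $\limsup_k$, then letting $G\downarrow\{\mu\}$ and using that $\sigma\mapsto\int Q\,d\sigma$ is lower semicontinuous (so $\sup_G \inf_{\sigma\in G}\int Q\,d\sigma = \int Q\,d\mu$), we obtain $\overline J^Q(\mu)\le \overline J(\mu)\cdot e^{-2\int_K Q\,d\mu}$. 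Care is needed when $\int Q\,d\mu=+\infty$: then the right-hand side is $0$ (interpreting $\overline J(\mu)$ as finite, bounded by $e^{-I(\mu)}\le$ diam-type constant) and the inequality holds trivially, but one should check $\overline J(\mu)$ is genuinely finite, which follows from item 1 applied with $Q=0$ and $I(\mu)>-\infty$ after the diameter-$<1$ rescaling.

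For item 3, when $Q$ is continuous we also need the reverse inequality. Here I would exploit that $Q\in C(K)$ forces $\sigma\mapsto\int Q\,d\sigma$ to be continuous (not merely l.s.c.) on $\MM(K)$, so on a small enough neighborhood $G$ of $\mu$ one has both $\inf_{\sigma\in G}\int Q\,d\sigma \ge \int Q\,d\mu - \varepsilon$ and $\sup_{\sigma\in G}\int Q\,d\sigma \le \int Q\,d\mu + \varepsilon$. The second bound gives, on $\tilde G_k$, $\prod_j e^{-2(k-1)Q(a_j)}\ge \exp(-2k(k-1)[\int Q\,d\mu+\varepsilon])$, hence $J^Q_k(G)\ge e^{-2\int Q\,d\mu - 2\varepsilon}\cdot J_k(G)$; taking $\limsup_k$, then $\inf_G$ and $\varepsilon\to0$ yields $\overline J^Q(\mu)\ge \overline J(\mu)\cdot e^{-2\int_K Q\,d\mu}$. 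Combined with item 2 this gives equality; the $\underline J$ versions are identical with $\liminf$ in place of $\limsup$. The main obstacle I anticipate is the bookkeeping in item 1 — namely making the passage from the uniform bound $\sup_{\tilde G_k}|VDM^Q_k|^{2/k(k-1)}\le e^{-I^Q(\mu)+o(1)}$ fully rigorous, since (\ref{upboundVDM}) is stated for a single weakly convergent sequence whereas here one needs it uniformly over all arrays whose empirical measures lie in a shrinking family of neighborhoods; this is handled by a standard compactness/diagonal argument (if it failed, extract a subsequence of near-maximizing arrays whose empirical measures converge to some $\mu'\in\bar G$, contradict (\ref{upboundVDM}) at $\mu'$ using lower semicontinuity of $I^Q$), but it is the step that requires the most attention.
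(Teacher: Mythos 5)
Your proposal is correct and follows essentially the same route as the paper: item 1 by bounding $J^Q_k(G)$ by the supremum of the weighted Vandermonde over $\tilde G_k$ and invoking (\ref{upboundVDM}) (the compactness/subsequence argument you flag at the end is exactly what is implicit there), and items 2--3 via the factorization (\ref{VDM-Q}) together with the lower semicontinuity (resp.\ continuity, for $Q$ continuous) of $\sigma\mapsto\int_K Q\,d\sigma$ to control $\frac{1}{k}\sum_j Q(a_j)$ on small neighborhoods $G$ of $\mu$. One small slip: in item 1 you appeal to ``upper semicontinuity of $I^Q$,'' which should be lower semicontinuity of $I^Q$ (equivalently upper semicontinuity of $e^{-I^Q}$), as you in fact state correctly in your final compactness argument.
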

\begin{proof}
Item 1. follows from 
$$J^Q_{k}(G)\leq \sup_{{\bf a}\in\tilde G_{k}}|VDM_{k}^{Q}({\bf a})|^
{2/k(k-1)}
,$$
and the upper bound (\ref{upboundVDM}) on the limit of the Vandermonde. We prove item 2. and item 3. simultaneously. We first observe that if $\mu \in {\mathcal M}(K)$ and $Q$ is continuous on $K$,  given $\epsilon >0$, there exists a neighborhood $G \subset  {\mathcal M}(K)$ of $\mu$ with
$$\big|\int_{K} Q\Big(d\mu -\frac{1}{k}\sum_{j=1}^{k} 
\delta_{a_{j}}\Big)\big|\leq\epsilon \quad \hbox{for} \ {\bf a} \in \tilde G_k$$
for $k$ sufficiently large. Thus we have
\begin{equation}\label{ineg}
-\epsilon -\int_{K}Qd\mu\leq -\frac{1}{k}\sum_{j=1}^{k}
 Q(a_{j})
 \leq \epsilon-\int_{K}Qd\mu.
 \end{equation}
Note that for $Q\in \AA(K)$, hence lower semicontinuous, we only have the second inequality. 
Since
$$|VDM_k^Q({\bf a})|=|VDM_k({\bf a})|\cdot 
\prod_{j=1}^{k} e^{-(k-1)Q(a_{j})},$$ 
we deduce from (\ref{ineg}) that
$$|VDM_k({\bf a})|e^{-k(k-1)
(\epsilon+\int_K Qd\mu)} 
\leq |VDM^Q_k({\bf a})| \leq 
|VDM_k({\bf a})|e^{k(k-1)(\epsilon-\int_K Qd\mu)}.
$$
Now we take the square, integrate over ${\bf a}\in \tilde G_k$ and take a 
$k(k-1)$-th root of each side to get
$$J_k(G)e^{-2(\epsilon+\int_K Q d\mu)} \leq J^Q_k(G) \leq J_k(G)
e^{2(\epsilon-\int_K Q d\mu)}.
$$
Precisely, given $\epsilon >0$, these inequalities are valid for $G$ a sufficiently small neighborhood of $\mu$. Hence we get, upon taking $\limsup_{k\to \infty}$, the infimum over $G\ni \mu$, and noting that $\epsilon >0$ is arbitrary,
$$\overline J(\mu)=\overline J^Q(\mu)\cdot e^{2\int_K Qd\mu}$$
as desired. If $Q$ is only lower semicontinuous, we still have the upper bounds in the above, which gives item 2.
\end{proof}
From item 1. in Lemma \ref{lem-J-JQ}, we know that for $Q\in \AA(K)$
\begin{equation}\label{Jupbound}
\log \underline J^{Q}(\mu)\leq \log \overline J^{Q}(\mu)\leq -I^Q(\mu).
\end{equation}
In the remainder of this section, we show that
when the measure $\nu$ satisfies a {Bernstein-Markov property}, equalities hold in (\ref{Jupbound}). 

We first consider the unweighted functionals $\underline J$ and $\overline J$ and the case of an equilibrium measure $\mu=\mu_{K,v}$ where $v\in \AA(K)$
\begin{lemma}\label{lem-eq-case}
Let $K$ be non log-polar, $v\in \AA(K)$, and let $\nu\in {\mathcal M}(K)$ such that $(K,\nu,v)$ satisfy a weighted Bernstein-Markov property. Then, 
 \begin{equation}\label{jversion}
 \log \overline J(\mu_{K,v})= \log\underline  J(\mu_{K,v})=-I(\mu_{K,v}).
 \end{equation}
 \end{lemma}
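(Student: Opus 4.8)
The goal is to show that for an equilibrium measure $\mu^{K,v}$ the $J$-functionals hit their natural upper bound $e^{-I(\mu^{K,v})}$. By (\ref{Jupbound}) (in the unweighted form) we already have $\log\overline J(\mu^{K,v})\le -I(\mu^{K,v})$, so the whole content is the matching lower bound
$$\log\underline J(\mu^{K,v})\ge -I(\mu^{K,v}),$$
and then the chain $-I(\mu^{K,v})\le\log\underline J\le\log\overline J\le -I(\mu^{K,v})$ forces all the inequalities to be equalities. The first thing I would do is invoke Proposition \ref{weightedtd}: since $(K,\nu,v)$ satisfies a weighted Bernstein-Markov property, $\lim_{k\to\infty}(Z_k^v(K,\nu))^{1/k(k-1)}=e^{-I(\mu^{K,v})}$ (using $V_w=I(\mu^{K,v})$ for the equilibrium weight $v$, cf. Theorem \ref{frost}). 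So the full-space integral has exactly the right exponential rate; the point is to show that the contribution of the complement of a neighborhood of $\mu^{K,v}$ is exponentially negligible, so that the integral over $\tilde G_k$ retains the same rate.

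The mechanism for that is Corollary \ref{johansson} / Corollary \ref{416}: for any $\eta>0$, the set $A_{k,\eta}$ of configurations with $|VDM_k^v(\mathbf X_k)|^2\ge(\delta^v(K)-\eta)^{k(k-1)}$ has probability (under $Prob_k$) tending to $1$ at exponential rate, and by 3. of Theorem \ref{sec3} any array concentrated on such near-Fekete configurations has empirical measures converging weakly to $\mu^{K,v}$. Concretely, fix a neighborhood $G\ni\mu^{K,v}$. I would first argue that there is $\eta>0$ and $k_0$ such that for $k\ge k_0$, every $\mathbf X_k\in A_{k,\eta}$ has $\frac1k\sum\delta_{a_j}\in G$, i.e. $A_{k,\eta}\subset\tilde G_k$ — this is a quantitative version of 3. of Theorem \ref{sec3}: if it failed, one could extract a sequence of configurations in $A_{k,\eta}\setminus\tilde G_k$ whose weighted Vandermondes still have the maximal rate, hence whose empirical measures converge to $\mu^{K,v}$ (by Theorem \ref{sec3}), contradicting their staying outside the neighborhood $G$. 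Granting this,
$$J_k^v(G)^{k(k-1)}=\int_{\tilde G_k}|VDM_k^v(\mathbf a)|^2\,d\nu(\mathbf a)\ \ge\ \int_{A_{k,\eta}}|VDM_k^v(\mathbf a)|^2\,d\nu(\mathbf a)=Z_k^v(K,\nu)\cdot Prob_k(A_{k,\eta}).$$
Since $Prob_k(A_{k,\eta})\ge 1-(1-\tfrac{\eta}{2e^{-V_w}})^{k(k-1)}\nu(K^k)\to 1$ (taking $\eta$ small; note $\nu(K)<\infty$), taking $k(k-1)$-th roots and $\liminf_{k\to\infty}$ gives $\underline J(G)\ge e^{-I(\mu^{K,v})}$, and then the infimum over $G\ni\mu^{K,v}$ gives $\underline J(\mu^{K,v})\ge e^{-I(\mu^{K,v})}$. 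Combined with (\ref{Jupbound}) this yields (\ref{jversion}).

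The main obstacle is the "quantitative Theorem \ref{sec3}" step, i.e. showing $A_{k,\eta}\subset\tilde G_k$ for suitable $\eta$ and large $k$; everything else is bookkeeping with Proposition \ref{weightedtd} and Corollary \ref{johansson}. I would phrase it as a compactness/contradiction argument in the Polish space $\mathcal M(K)$: the complement $\mathcal M(K)\setminus G$ is compact, so a bad sequence of empirical measures has a weak limit $\sigma\ne\mu^{K,v}$ staying outside $G$, while the near-maximality of $|VDM_k^v|^{2/k(k-1)}$ along this sequence, together with the upper bound (\ref{upboundVDM}), forces $I^v(\sigma)\le V_w=I^v(\mu^{K,v})$, hence $\sigma=\mu^{K,v}$ by uniqueness in Theorem \ref{frost} — contradiction. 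One should be slightly careful that (\ref{upboundVDM}) is stated for a weakly convergent sequence, so one passes to the convergent subsequence first; and one uses that $\delta^v(K)-\eta<\delta^v(K)=e^{-V_w}$ strictly, so the near-Fekete lower bound on $|VDM_k^v|^{2/k(k-1)}$ survives in the limit along the subsequence. I expect no difficulty from the measure $\nu$ beyond $\nu(K)<\infty$, which is assumed.
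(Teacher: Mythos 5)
Your overall skeleton is the paper's own: prove the inclusion $A_{k,\eta}\subset\tilde G_k$ for near-Fekete configurations, use Corollary \ref{johansson} to show $Prob_k(A_{k,\eta})\to 1$, and use Proposition \ref{weightedtd} for the rate of $Z_k^v$. But there is a genuine gap at the end. Everything you compute involves the \emph{weighted} Vandermonde $|VDM_k^v|^2$, so after taking $k(k-1)$-th roots and $\liminf$ your displayed chain gives $\underline J^{\,v}(G)\ge e^{-V_w}$, a lower bound for the \emph{weighted} functional in terms of $V_w=I^v(\mu^{K,v})$. The lemma asserts the \emph{unweighted} identity $\log\underline J(\mu^{K,v})=-I(\mu^{K,v})$, and your passage from one to the other rests on the false identity ``$V_w=I(\mu^{K,v})$'': by Theorem \ref{frost}, $V_w=I^v(\mu^{K,v})=I(\mu^{K,v})+2\int v\,d\mu^{K,v}$, and the term $2\int v\,d\mu^{K,v}$ does not vanish in general; you also silently replace $\underline J^{\,v}(G)$ by $\underline J(G)$, which is not legitimate for a merely lower semicontinuous $v$. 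The missing step is item 2 of Lemma \ref{lem-J-JQ} (valid for lsc weights): $\underline J^{\,v}(\mu)\le \underline J(\mu)\,e^{-2\int_K v\,d\mu}$, whence $\underline J(\mu^{K,v})\ge \underline J^{\,v}(\mu^{K,v})\,e^{2\int v\,d\mu^{K,v}}\ge e^{-V_w+2\int v\,d\mu^{K,v}}=e^{-I(\mu^{K,v})}$, where $\int v\,d\mu^{K,v}<\infty$ by Theorem \ref{frost}. With this inserted, combined with the unweighted upper bound in (\ref{Jupbound}), your argument becomes the paper's proof.

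A secondary slip occurs in your ``quantitative Theorem \ref{sec3}'' step. With a \emph{fixed} $\eta>0$, configurations in $A_{k,\eta}$ only satisfy $|VDM_k^v|^{2/k(k-1)}\ge\delta^v(K)-\eta$, so along a weakly convergent bad subsequence the bound (\ref{upboundVDM}) yields $I^v(\sigma)\le-\log(\delta^v(K)-\eta)$, which is strictly larger than $V_w$; you cannot conclude $I^v(\sigma)\le V_w$ and invoke uniqueness, as written. The paper runs the contradiction with a sequence $\eta_j\downarrow 0$, so the extracted configurations are genuinely asymptotically Fekete and item 3 of Theorem \ref{sec3} applies directly. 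Alternatively you may keep a fixed $\eta$, but then you must first choose it small: by compactness of $\mathcal M(K)\setminus G$ and lower semicontinuity of $I^v$ pick $\epsilon>0$ with $\{I^v\le V_w+\epsilon\}\subset G$, then take $\eta$ with $-\log(\delta^v(K)-\eta)<V_w+\epsilon$. Since $A_{k,\eta}$ increases with $\eta$, either route gives the inclusion you need, and the rest of your bookkeeping is correct.
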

 \begin{proof}
 To prove (\ref{jversion}), we first verify the following.
 \medskip
 
\noindent {\sl Claim: Fix a neighborhood $G$ of $\mu_{K,v}$. For $\eta >0$, define
\begin{equation*}
 A_{k,\eta}:=\{{\bf Z_k}\in K^{k}: |VDM_k^v({\bf Z_k})|^2 \geq 
 ( \delta^v(K) -\eta)^{k(k-1)}\}.
 \end{equation*}
Given a sequence $\{\eta_j\}$ with $\eta_j\downarrow 0$, there exists a $j_{0}$ and a $k_{0}$ such that
\begin{equation}\label{setinclu}  \forall j\geq j_{0},\quad\forall k\geq k_{0},\quad A_{k,\eta_j} \subset \tilde G_{{k}}.
\end{equation}
}
 \medskip
 \noindent 
We prove (\ref{setinclu}) by contradiction: if false, there are sequences $\{k_l\}$ and $\{j_{l}\}$ tending to infinity such that for all $l$ sufficiently large we can find a point ${\bf Z_{k_l}}=(z_1,...,z_{k_l})$ with ${\bf Z_{k_l}}\in 
 A_{k_l,\eta_{j_{l}}} \setminus \tilde G_{k_l}$. But 
 $$\mu^l:=\frac{1}{k_{l}}\sum_{i=1}^{k_{l}}\delta_{z_{i}}\not\in G$$ 
 for $l$ sufficiently large is a contradiction with item 3. of Theorem \ref{sec3} since ${\bf Z_{k_l}}\in A_{k_l,\eta_{j_{l}}}$ and $\eta_{j_l}\to 0$ imply $\mu^l\to \mu_{K,v}$ weakly. This proves the claim.
 \medskip

Fix a neighborhood $G$ of $\mu_{K,v}$ and a sequence $\{\eta_j\}$ with $\eta_j\downarrow 0$. For $j\geq j_{0}$, choose $k=k_j$ large enough so that the inclusion in (\ref{setinclu}) holds true as well as
\begin{equation}\label{probk2}Prob_{k_j}(K^{k_j}\setminus A_{k_j,\eta_j})\leq 
\Big(1-\frac{\eta_j}{2\delta^v(K)}\Big)^{{k_j}({k_j}-1)},
\end{equation}
and
\begin{equation}\label{seq}\Big(1-\frac{\eta_j}{2\delta^v(K)}\Big)^{{k_j}({k_j}-1)}
\to 0 \quad \hbox{as} \quad j\to \infty,
\end{equation}
which is possible (for (\ref{probk2}) we make use of Corollary \ref{johansson}). In view of (\ref{setinclu}), the definition of $Prob_{k_{j}}$, and (\ref{probk2}), we have
\begin{align}\notag
\frac{1}{Z^v_{k_j}} 
\int_{\tilde G_{k_j}}  |VDM_{k_j}^{v}({\bf Z_{k_j}})|^2  d\nu({\bf Z_{k_j}})
& \geq \frac{1}{Z^v_{k_j}} 
\int_{A_{k_j,\eta_j} }  |VDM_{k_j}^{v}({\bf Z_{k_j}})|^2  d\nu({\bf Z_{k_j}}) \\
\label{ineg-for-J}
& \geq 1- \Big(1-\frac{\eta_j}{2\delta^v(K)}\Big)^{{k_j}({k_j}-1)}.
\end{align}
Note that, because of (\ref{seq}), the lower bound in (\ref{ineg-for-J}) tends to 1 as $j\to\infty$. Then,
since $(K,\nu,v)$ satisfy a weighted Bernstein-Markov property, we derive, with 
the asymptotics of $Z_{k_{j}}^{v}$ given in Proposition \ref{weightedtd}, that
$$\liminf_{j\to \infty} \frac{1}{k_j(k_j-1)}\log \int_{\tilde G_{k_j}}  |VDM_{k_j}^{v}({\bf Z_{k_j}})|^2  d\nu({\bf Z_{k_j}}) \geq \log \delta^v(K).$$
Given any sequence of positive integers $\{k\}$ we can find a subsequence $\{k_j\}$ as above corresponding to some $\eta_j\downarrow 0$; hence
$$\liminf_{k\to \infty} \frac{1}{{k}({k}-1)}\log \int_{\tilde G_{k}}  |VDM_{k}^{v}({\bf Z}_k)|^2 d\nu({\bf Z}_{k})\geq \log \delta^v(K).$$
It follows that 
$$
\log \underline J^{v}(G)\geq \log \delta^v(K).$$
Taking the infimum over all neighborhoods $G$ of $\mu_{K,v}$ we obtain
$$\log \underline J^{v}(\mu_{K,v})\geq  \log \delta^v(K).$$
Using item 2. of Lemma \ref{lem-J-JQ} with $\mu = \mu_{K,v}$, we get
$$
\log \underline J(\mu_{K,v})\geq -I(\mu_{k,v}),
$$
and with the unweighted version of item 1., we obtain (\ref{jversion}).
 \end{proof}
 
 \begin{remark} We observe that the proof only used the property 
$$\lim_{k\to \infty} (Z_k^v(K,\nu))^{1/k(k-1)}=\delta^{v}(K).$$
 
 \end{remark}
 \begin{theorem} \label{rel-J-E} 
 Let $K$ be a non log-polar compact subset of $\R^{n}$ 
 and let $\nu\in {\mathcal M}(K)$ satisfy the (strong) Bernstein-Markov property. \\
 (i) For any $\mu\in \mathcal M(K)$, 
 \begin{equation}\label{minunwtd}
 \log \overline J(\mu)= \log\underline  J(\mu)=-I(\mu).\end{equation}
(ii) Let $Q\in\AA(K)$. Then
\begin{equation}\label{rel-J-JQ}
 \overline J^{Q}(\mu)=\overline J(\mu)\cdot e^{-2\int_K Qd\mu},
 \end{equation}
(and with the $\underline J,\underline J^Q$ 
functionals as well) so that,
 \begin{equation}\label{minwtd}\log \overline J^Q(\mu)= \log \underline J^Q(\mu)=-I^{Q}(\mu).
\end{equation}
\end{theorem}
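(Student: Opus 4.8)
The plan is to bootstrap from the equilibrium-measure case (Lemma \ref{lem-eq-case}) to arbitrary $\mu\in\MM(K)$ using the approximation machinery of Section 5, and then to transfer the unweighted statement (i) to the weighted statement (ii) by combining it with Lemma \ref{lem-J-JQ}. The upper bounds $\log\overline J(\mu)\le -I(\mu)$ and $\log\overline J^Q(\mu)\le -I^Q(\mu)$ are already in hand from \eqref{Jupbound} and item 1 of Lemma \ref{lem-J-JQ}, so the whole content is the matching lower bounds on $\underline J$ and $\underline J^Q$.

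First I would dispose of the trivial case $I(\mu)=\infty$, where \eqref{minunwtd} holds automatically from \eqref{Jupbound}. So assume $I(\mu)<\infty$. Apply Corollary \ref{corapprox}: there is an increasing sequence of non-log-polar compact sets $K_m\subset K$, measures $\mu_m\in\MM(K_m)$ with $\mu_m\to\mu$ weakly and $I(\mu_m)\to I(\mu)$, and continuous weights $Q_m$ on $K$ with $\mu_m=\mu_{K,Q_m}$. Since $\nu$ is a strong Bernstein-Markov measure for $K$ and $Q_m$ is continuous, $(K,\nu,Q_m)$ satisfies the weighted Bernstein-Markov property, so Lemma \ref{lem-eq-case} applies to each $\mu_m$ with $v=Q_m$, giving $\log\underline J(\mu_m)=\log\underline J^{Q_m}(\mu_m)+2\int Q_m\,d\mu_m$ via item 3 of Lemma \ref{lem-J-JQ}, and in fact $\log\underline J(\mu_m)=-I(\mu_m)$. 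The key monotonicity observation is that the $J_k$-functionals decrease under enlarging the neighborhood, so for a fixed neighborhood $G$ of $\mu$, once $m$ is large $\mu_m\in G$, hence any neighborhood $G_m\ni\mu_m$ small enough to be contained in $G$ satisfies $\underline J(G)\ge \underline J(G_m)$ — wait, the containment goes the other way. More carefully: a neighborhood $G$ of $\mu$ is, for large $m$, also a neighborhood of $\mu_m$, so $\underline J(\mu_m)\le \underline J(G)$, giving $\log\underline J(G)\ge \log\underline J(\mu_m)=-I(\mu_m)\to -I(\mu)$. Taking the infimum over neighborhoods $G\ni\mu$ yields $\log\underline J(\mu)\ge -I(\mu)$, which with \eqref{Jupbound} proves (i).

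For (ii), I would first get \eqref{rel-J-JQ}. Item 2 of Lemma \ref{lem-J-JQ} already gives $\overline J^Q(\mu)\le \overline J(\mu)e^{-2\int Q\,d\mu}$ (and likewise for $\underline J$), so only the reverse inequality is needed. Here the natural route is again approximation: using Lemma \ref{lemma-scal} applied to the weight $Q$ itself, obtain $K_m\uparrow$, $\mu_m\to\mu$, with $Q|_{K_m}$ continuous and $\int Q\,d\mu_m\to\int Q\,d\mu$. One then runs the Bernstein-Markov argument of Lemma \ref{lem-eq-case} but carrying the weight $Q$ along: restricting integration to $\tilde G_{m,k}$ for $K_m$ and using the weighted Bernstein-Markov property of $(K_m,\nu|_{K_m},Q|_{K_m})$ together with the $Z_k^{Q}$-asymptotics, one gets a lower bound for $\underline J^Q(\mu)$ in terms of $\lim (Z_k^{Q_m}(K_m,\cdot))^{1/k(k-1)}=\exp(-V_{w_m})$ on $K_m$; combined with the monotone convergence $I^{Q_m}(\theta_m)\to I^Q(\mu)$ exactly as in the proof of Proposition \ref{7to4}, this forces $\log\underline J^Q(\mu)\ge -I^Q(\mu)$. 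With the upper bound from item 1 of Lemma \ref{lem-J-JQ}, this gives the chain $-I^Q(\mu)\le \log\underline J^Q(\mu)\le\log\overline J^Q(\mu)\le -I^Q(\mu)$, and then \eqref{rel-J-JQ} follows by dividing the just-proved \eqref{minwtd} by the already-proved \eqref{minunwtd}, since $-I^Q(\mu)=-I(\mu)-2\int Q\,d\mu$ when $I(\mu)<\infty$, and the case $I(\mu)=\infty$ is handled separately using that both sides of \eqref{rel-J-JQ} vanish (one checks $\overline J(\mu)=0$ and the weighted factor is finite because $Q$ is bounded below).

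\textbf{Main obstacle.} The delicate point is the lower bound for $\underline J^Q$ at a \emph{general} $\mu$ with a merely lower semicontinuous weight: Lemma \ref{lem-eq-case} is stated only for equilibrium measures and uses item 2 of Lemma \ref{lem-J-JQ} crucially at the step $\log\underline J(\mu_{K,v})\ge -I(\mu_{K,v})$, so one cannot directly invoke it for non-equilibrium $\mu$. The resolution — going through the approximants $\mu_m$, which \emph{are} equilibrium measures for the continuous weights $Q_m$, and then using semicontinuity/monotonicity of $\underline J$ in the measure variable — is exactly the mechanism of Corollary \ref{corapprox}, and the only real care needed is the bookkeeping that a neighborhood of $\mu$ eventually becomes a neighborhood of each $\mu_m$, so that $\underline J(\mu_m)\le \underline J(G)$ for all large $m$. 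I expect no conceptual difficulty beyond this, but the simultaneous tracking of the weight in the $\underline J^Q$ version of the Bernstein-Markov argument (the analogue of the telescoping estimate in Proposition \ref{weightedtd}, restricted to $\tilde G_k$) is the most computational part.
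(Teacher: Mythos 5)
Your part (i) is essentially the paper's own argument: approximate $\mu$ by the weighted equilibrium measures $\mu_m=\mu_{K,Q_m}$ of Corollary \ref{corapprox}, apply Lemma \ref{lem-eq-case} to each $\mu_m$ (legitimate, since $\nu$ is a strong Bernstein--Markov measure and the $Q_m$ are continuous), and pass to the limit; your neighborhood bookkeeping ($G\ni\mu$ is eventually a neighborhood of $\mu_m$, so $\underline J(G)\ge \underline J(\mu_m)=e^{-I(\mu_m)}$) is just an unwinding of the upper semicontinuity of $\underline J$ that the paper invokes, and it is correct.

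Part (ii), however, contains a genuine gap. You propose to prove $\log\underline J^{Q}(\mu)\ge -I^{Q}(\mu)$ for a \emph{general} $\mu$ directly, by running the concentration argument of Lemma \ref{lem-eq-case} ``carrying the weight $Q$ along'' on the sets $K_m$ of Lemma \ref{lemma-scal}, i.e.\ with the triple $(K_m,\nu|_{K_m},Q|_{K_m})$, so as to bound $\underline J^{Q}(\mu)$ below by $\lim_k (Z_k^{Q_m}(K_m,\cdot))^{1/k(k-1)}=\exp(-V_{w_m})$ and then let $m\to\infty$ as in Proposition \ref{7to4}. This step fails. The heart of Lemma \ref{lem-eq-case} is the inclusion \eqref{setinclu}, $A_{k,\eta}\subset\tilde G_k$, which rests on item 3.\ of Theorem \ref{sec3}: near-maximal configurations for the weight $Q|_{K_m}$ have empirical measures converging to the equilibrium measure $\mu_{K_m,Q|_{K_m}}$. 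Consequently the integral restricted to $\tilde G_k$ captures most of $Z_k^{Q_m}$ only when $G$ is a neighborhood of that equilibrium measure; for a neighborhood of a general $\mu$ the $Prob_k$-mass of $\tilde G_k$ is exponentially small (that is precisely the content of the LDP being proved), so no inequality of the form $\underline J^{Q}(G)\ge \exp(-V_{w_m})$ is available. A quick sanity check: if $Q$ is continuous on all of $K$ (so one may take $K_m=K$ and $\theta_m=\mu$), your scheme would yield $\underline J^{Q}(\mu)\ge \exp(-V_w)$ for \emph{every} $\mu$, contradicting \eqref{Jupbound} as soon as $I^{Q}(\mu)>V_w$. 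The ensemble weighted by $Q$ simply does not ``see'' a general $\mu$; to reach $\mu$ one must tilt the weight so that (an approximant of) $\mu$ becomes an equilibrium measure -- which is exactly what part (i) encodes -- and then transfer back to the weight $Q$. That is the paper's route: having (i), one proves \eqref{rel-J-JQ} by applying item 3.\ of Lemma \ref{lem-J-JQ} to the approximants $\mu_m\in\MM(K_m)$ of Lemma \ref{lemma-scal} (where $Q|_{K_m}$ is continuous), writing $\overline J^{Q}(\mu_m)=\overline J(\mu_m)e^{-2\int Q d\mu_m}=e^{-I(\mu_m)-2\int Q d\mu_m}$, and then using the upper semicontinuity of $\overline J^{Q}$ together with items 2.\ and 3.\ of Lemma \ref{lemma-scal}; the equality \eqref{minwtd} is then deduced from (i) and \eqref{rel-J-JQ}, not the other way around. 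Your final ``division'' step is harmless, but its input \eqref{minwtd} must come from \eqref{rel-J-JQ}, which your argument does not independently establish.
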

\begin{proof} 
We first prove (i). The upper bound 
\begin{equation}\label{upper-J}
\log\overline J(\mu)\leq -I(\mu)
\end{equation}
 is the unweighted version of (\ref{Jupbound}).
For the lower bound $-I(\mu)\leq\log\underline J(\mu)$ we first assume that 
$I(\mu)<\infty$.  
Using Corollary \ref{corapprox}, 
there exists a sequence of (continuous) functions $Q_{m}$ defined on $K$ and measures $\mu_{m}=\mu_{K,Q_{m}}$ tending weakly to $\mu$ such that, 
\begin{equation}\label{from2}\lim_{m\to \infty}I(\mu_{m}) =I(\mu).
\end{equation}
Thus we can apply Lemma \ref{lem-eq-case} to conclude
$$\log \overline J(\mu_{m})= \log  \underline J(\mu_{m})=-I(\mu_{m}),$$
and from (\ref{from2}) along with the uppersemicontinuity of the functional $\mu \to \underline J(\mu)$, we derive
$$\lim_{m\to \infty} \log \underline J(\mu_{m})=-I(\mu)\leq \log \underline J(\mu).$$
Together with (\ref{upper-J})
we get
$$\log \underline J(\mu)=\log  \overline J(\mu)=-I(\mu).$$
If $\mu\in \mathcal M(K)$ satisfies $I(\mu)=\infty$, 
Item 1. of Lemma \ref{lem-J-JQ} shows that $\overline J(\mu)=0$. 

We next proceed with assertion (ii). For (\ref{rel-J-JQ}), it is sufficient to prove the inequality 
\begin{equation}\label{minor-JQ}
\overline J^{Q}(\mu)\geq\overline J(\mu)\cdot e^{-2\int_K Qd\mu}.
\end{equation}
We first assume that $Q$ is finite $\mu$-almost everywhere and $I(\mu)<\infty$ so that Lemma \ref{lemma-scal} can be applied on $K$. Let $K_{m}$ be the sequence of compact subsets of $K$ and $\mu_{m}$ be the sequence of measures in $\MM(K_{m})$ given by that lemma.
By the upper semicontinuity of the functional $\overline J^{Q}$,
$$\overline J^{Q}(\mu)\geq\limsup_{m \to \infty}\overline J^{Q}(\mu_{m}).$$
Also, by item 3. of Lemma \ref{lem-J-JQ} and (\ref{minunwtd}), since $Q|_{K_m}$ is continuous,
$$\overline J^{Q}(\mu_{m})=\overline J(\mu_{m})e^{-2\int Qd\mu_{m}}=
e^{-I(\mu_{m})-2\int Qd\mu_{m}}.$$
Hence, (\ref{minor-JQ}) follows from items 2. and 3. of Lemma \ref{lemma-scal}.
When $I(\mu)=\infty$, both sides of (\ref{minor-JQ}) equal 0, since $\overline J(\mu)=e^{-I(\mu)}$ and, by definition,
$0\leq\overline J^{Q}(\mu)$. If $\mu(\{Q=\infty\})>0$, this is true as well because 
$\overline J(\mu)>-\infty$ while the exponential in the right-hand side vanishes.

Finally, (\ref{minwtd}) follows from (\ref{minunwtd}) and (\ref{rel-J-JQ}).
 \end{proof}

\begin{remark} \label{fourfour} We note that the Bernstein-Markov property of the measure $\nu$ has only been applied with the sequence of continuous weights $Q_{m}$ that appear when approaching $\mu$ with Corollary \ref{corapprox}. \end{remark}
From now on, we simply use the notation $J,J^Q$, without the overline or underline. It follows from (\ref{minunwtd}) and (\ref{minwtd}) that these functionals are independent of the measure $\nu$; i.e., we have shown: {\it  if $\nu\in \mathcal M(K)$ is {\bf any} (strong) Bernstein-Markov measure, for any $Q\in \AA(K)$,
and for any $\mu\in \mathcal M(K)$ we have}
\begin{equation}\label{w=j=i} 
\log J^Q(\mu)=-I^Q(\mu). 
\end{equation}
\section{Large Deviation Principle in $\R^n$}
Fix a non log-polar compact set $K$ in $\R^{n}$, a measure $\nu$ on $K$
and $Q\in\AA(K)$. 
Define 
$j_k:  K^{k} \to \mathcal M(K)$ via 
\begin{equation}\label{jk} j_k(x_1,...,x_{k})
=\frac{1}{k}\sum_{j=1}^{k} \delta_{x_j}.\end{equation}
The push-forward
$\sigma_k:=(j_k)_*(Prob_k) $ (see (\ref{probk}) for the definition of $Prob_k$) is a probability measure on $\mathcal M(K)$: for a Borel set $G\subset \mathcal M(K)$,
\begin{equation}\label{sigmak}
\sigma_k(G)=\frac{1}{Z_k^{Q}} \int_{\tilde G_{k}} |VDM_k^Q(x_1,...,x_{k})|^2 d\nu(x_1) \cdots d\nu(x_{k}),
\end{equation}
recall (\ref{L2}), (\ref{probk}) and (\ref{nbhddef}); here, $Z_k^{Q}$ depends on $K$, $Q$ and $\nu$.

\begin{theorem} \label{ldp} Assume $\nu$ is a (strong) Bernstein-Markov measure on $K$, $Q\in \mathcal A(K)$, and $\nu$ satisfies 
\begin{equation}\label{nuas}\lim_{k\to \infty} (Z_k^Q(K,\nu))^{1/k(k-1)}=\exp{(-V_w)}=\delta^{Q}(K).\end{equation} The sequence $\{\sigma_k=(j_k)_*(Prob_k)\}$ of probability measures on $\mathcal M(K)$ satisfies a {\bf large deviation principle} with speed $k^{2}$ and good rate function $\mathcal I:=\mathcal I_{K,Q}$ where, for $\mu \in \mathcal M(K)$,
\begin{equation*}
\mathcal I(\mu):=\log J^Q(\mu_{K,Q})-\log J^Q(\mu)=I^Q(\mu)-I^Q(\mu_{K,Q}).
\end{equation*}
 \end{theorem}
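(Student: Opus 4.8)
The plan is to establish the large deviation principle from the already-proved identity $\log J^Q(\mu) = -I^Q(\mu)$ (equation (\ref{w=j=i})), following the standard weak-LDP strategy. Recall that to prove an LDP with speed $k^2$ and rate function $\mathcal I$, it suffices, since $\mathcal M(K)$ is a compact Polish space and hence the family $\{\sigma_k\}$ is automatically exponentially tight, to verify two estimates: the \emph{upper bound}
$$\limsup_{k\to\infty}\frac{1}{k^2}\log\sigma_k(F)\leq -\inf_{\mu\in F}\mathcal I(\mu)$$
for closed sets $F$, and the \emph{lower bound}
$$\liminf_{k\to\infty}\frac{1}{k^2}\log\sigma_k(O)\geq -\inf_{\mu\in O}\mathcal I(\mu)$$
for open sets $O$. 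Because $\mathcal M(K)$ is compact and metrizable, it is enough (by a standard argument) to prove the local estimates: for each $\mu\in\mathcal M(K)$, on one hand $\inf_{G\ni\mu}\limsup_k \frac{1}{k^2}\log\sigma_k(G)\leq -\mathcal I(\mu)$, and on the other hand $\inf_{G\ni\mu}\liminf_k\frac{1}{k^2}\log\sigma_k(G)\geq -\mathcal I(\mu)$, the infima over neighborhoods $G$ of $\mu$.

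Next I would unwind the definitions. From (\ref{sigmak}) and (\ref{jkqmu}) we have
$$\frac{1}{k^2}\log\sigma_k(G)=\frac{1}{k^2}\log\int_{\tilde G_k}|VDM_k^Q({\bf a})|^2\,d\nu({\bf a})-\frac{1}{k^2}\log Z_k^Q = \frac{k(k-1)}{k^2}\log J_k^Q(G) - \frac{1}{k^2}\log Z_k^Q.$$
Using the hypothesis (\ref{nuas}), namely $\frac{1}{k(k-1)}\log Z_k^Q \to -V_w = \log J^Q(\mu_{K,Q})$ (the last equality by (\ref{w=j=i})), and the fact that $k(k-1)/k^2\to 1$, the term $-\frac{1}{k^2}\log Z_k^Q \to V_w = -\log J^Q(\mu_{K,Q})$. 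Therefore
$$\lim_k\Big(\frac{1}{k^2}\log\sigma_k(G)\Big) \ \text{``}=\text{''}\ \lim_k \log J_k^Q(G) - \log J^Q(\mu_{K,Q}),$$
in the sense that taking $\limsup$ or $\liminf$ in $k$ and then the infimum over $G\ni\mu$ yields, by Definition \ref{jwmuq}, precisely $\log\overline J^Q(\mu) - \log J^Q(\mu_{K,Q})$ and $\log\underline J^Q(\mu)-\log J^Q(\mu_{K,Q})$ respectively. By Theorem \ref{rel-J-E}, both $\log\overline J^Q(\mu)$ and $\log\underline J^Q(\mu)$ equal $-I^Q(\mu)=\log J^Q(\mu)$, so both the local upper and local lower bounds equal $-(I^Q(\mu)-I^Q(\mu_{K,Q}))=-\mathcal I(\mu)$, as required. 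Finally I would record that $\mathcal I$ is a good rate function: it is nonnegative by item 2 of Theorem \ref{frost} (uniqueness/minimality of $\mu_{K,Q}$), it is lower semicontinuous because $\mu\mapsto I^Q(\mu)$ is (this is standard, following from lower semicontinuity of the kernel and Fatou/portmanteau, or can be cited from the upper semicontinuity of $J^Q$ proved in the excerpt together with (\ref{w=j=i})), and its sublevel sets are compact since $\mathcal M(K)$ itself is compact.

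The main obstacle, and the only place requiring genuine care, is the passage from the \emph{local} estimates to the \emph{global} upper bound over arbitrary closed sets. The lower bound for open sets follows immediately from the local lower bound (cover a point of $O$ by a small neighborhood contained in $O$). For the upper bound over a closed set $F$, one uses compactness: given $\delta>0$, each $\mu\in F$ has a neighborhood $G_\mu$ with $\limsup_k \frac{1}{k^2}\log\sigma_k(G_\mu)\leq -\min(\mathcal I(\mu),1/\delta)+\delta$; extract a finite subcover $G_{\mu_1},\dots,G_{\mu_N}$ of $F$, use subadditivity of $\limsup$ through $\log(\sum e^{a_i})\leq \max_i a_i + \log N$ with $\log N/k^2\to 0$, and let $\delta\to 0$. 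This is the routine ``from local to global'' argument in large deviation theory (cf. the proof strategy in \cite{VELD}), and I would present it compactly, citing the general principle rather than reproving it in full detail. One should also double-check the harmless bookkeeping that $\tilde G_k$ as defined in (\ref{nbhddef}) for a Borel set $G$ is itself Borel, so that $\sigma_k$ is genuinely a Borel probability measure on $\mathcal M(K)$, and that the $k(k-1)$ versus $k^2$ normalization discrepancy only contributes an asymptotically negligible factor, which it does since $|VDM_k^Q|$ is bounded on $K^k$.
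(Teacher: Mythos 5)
Your argument is correct and follows essentially the same route as the paper: you express $\frac{1}{k^{2}}\log\sigma_k(G)$ through $\log J_k^Q(G)$ and $\frac{1}{k^{2}}\log Z_k^Q$, use the hypothesis (\ref{nuas}) together with Theorem \ref{rel-J-E} to identify the local limsup and liminf functionals with $-\mathcal I(\mu)$, and note goodness from compactness of $\mathcal M(K)$, exactly as in the paper's proof. The only difference is that where the paper simply invokes Proposition \ref{dzprop1} (Dembo--Zeitouni, Theorem 4.1.11) to pass from the coinciding local estimates to the full LDP, you reprove that general principle by hand (open sets from the local lower bound, closed sets via a finite subcover and the $\log(\sum e^{a_i})\leq\max_i a_i+\log N$ estimate), which is a standard and perfectly valid substitute.
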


This means that $\mathcal I:\mathcal M(K)\to [0,\infty]$ is a lower semicontinuous mapping such that the sublevel sets $\{\mu \in \mathcal M(K): \mathcal I(\mu)\leq \alpha\}$ are compact in the weak topology on $\mathcal M(K)$ for all $\alpha \geq 0$ ($\mathcal I$ is ``good'') satisfying (\ref{lowb}) and (\ref{highb}):

\begin{definition} \label{equivform}
The sequence $\{\mu_k\}$ of probability measures on $\mathcal M(K)$ satisfies a {\bf large deviation principle} (LDP) with good rate function $\mathcal I$ and speed 
$k^2$ if for all 
measurable sets $\Gamma\subset \mathcal M(K)$, 
\begin{equation}\label{lowb}-\inf_{\mu \in \Gamma^0}\mathcal I(\mu)\leq \liminf_{k\to \infty} \frac{1}{k^2} \log \mu_k(\Gamma) \ \hbox{and}\end{equation}
\begin{equation}\label{highb} \limsup_{k\to \infty} \frac{1}{k^2} \log \mu_k(\Gamma)\leq -\inf_{\mu \in \bar \Gamma}\mathcal I(\mu).\end{equation}
\end{definition}

In the setting of $\mathcal M(K)$, to prove a LDP it suffices to work with a base for the weak topology. The following is a special case of a basic general existence result for a LDP given in Theorem 4.1.11 in \cite{DZ}.

\begin{proposition} \label{dzprop1} Let $\{\sigma_{\epsilon}\}$ be a family of probability measures on $\mathcal M(K)$. Let $\mathcal B$ be a base for the topology of $\mathcal M(K)$. For $\mu\in \mathcal M(K)$ let
$$\mathcal I(\mu):=-\inf_{\{G \in \mathcal B: \mu \in G\}}\bigl(\liminf_{\epsilon \to 0} \epsilon \log \sigma_{\epsilon}(G)\bigr).$$
Suppose for all $\mu\in \mathcal M(K)$,
$$\mathcal I(\mu)=-\inf_{\{G \in \mathcal B: \mu \in G\}}\bigl(\limsup_{\epsilon \to 0} \epsilon \log \sigma_{\epsilon}(G)\bigr).$$
Then $\{\sigma_{\epsilon}\}$ satisfies a LDP with rate function $\mathcal I(\mu)$ and speed $1/\epsilon$. 
\end{proposition}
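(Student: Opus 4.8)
The cleanest route is to reprove the statement directly — it is short — following \cite[Theorem~4.1.11]{DZ}, and exploiting that here the underlying space is compact. Indeed, for $K$ compact the space $\mathcal M(K)$ of probability measures with the weak topology is, by the Banach--Alaoglu theorem, a \emph{compact} metrizable space; consequently the upper bound for closed sets will reduce to the upper bound for compact sets, and the ``goodness'' of the rate function will be automatic once lower semicontinuity is established. I would therefore split the argument into three steps: (a) $\mathcal I$ is a good rate function; (b) the lower bound (\ref{lowb}); (c) the upper bound (\ref{highb}). Throughout, the only nontrivial input is that both the $\liminf$ and the $\limsup$ versions of the local rate function agree, which is precisely the extra hypothesis.

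For step (a): since each $\sigma_\epsilon$ is a probability measure, $\epsilon\log\sigma_\epsilon(G)\le 0$ for every $G$, so $\mathcal I\ge 0$. For each $G\in\mathcal B$ introduce $f_G\colon\mathcal M(K)\to[-\infty,\infty]$ equal to the constant $c_G:=-\liminf_{\epsilon\to0}\epsilon\log\sigma_\epsilon(G)\in[0,\infty]$ on $G$ and to $-\infty$ off $G$. Each $f_G$ is lower semicontinuous, because its superlevel sets are either $G$ (open) or empty. Since $\mathcal B$ is a base, every $\mu$ lies in some $G\in\mathcal B$, whence $\mathcal I(\mu)=\sup_{G\in\mathcal B,\ \mu\in G}f_G(\mu)=\sup_{G\in\mathcal B}f_G(\mu)$; a supremum of lower semicontinuous functions is lower semicontinuous, so $\mathcal I$ is. Finally the sublevel sets $\{\mathcal I\le\alpha\}$ are closed subsets of the compact space $\mathcal M(K)$, hence compact, so $\mathcal I$ is good.

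For step (b): let $\Gamma$ be open. For $\mu\in\Gamma$, pick $G\in\mathcal B$ with $\mu\in G\subset\Gamma$ (possible since $\mathcal B$ is a base); then $\sigma_\epsilon(\Gamma)\ge\sigma_\epsilon(G)$, hence $\liminf_{\epsilon\to0}\epsilon\log\sigma_\epsilon(\Gamma)\ge\liminf_{\epsilon\to0}\epsilon\log\sigma_\epsilon(G)\ge -\mathcal I(\mu)$, the last step being the very definition of $\mathcal I(\mu)$ as an infimum over such $G$. Taking the supremum over $\mu\in\Gamma$ gives $\liminf_{\epsilon\to0}\epsilon\log\sigma_\epsilon(\Gamma)\ge-\inf_{\mu\in\Gamma}\mathcal I(\mu)$; for a general measurable $\Gamma$, apply this to the open set $\Gamma^0$ and use $\sigma_\epsilon(\Gamma)\ge\sigma_\epsilon(\Gamma^0)$, yielding (\ref{lowb}).

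Step (c) is where the $\limsup$ hypothesis and compactness are used, and it is the only delicate point. Let $\Gamma$ be measurable and set $C:=\overline\Gamma$, a closed, hence compact, subset of $\mathcal M(K)$, and $\alpha:=\inf_{\mu\in C}\mathcal I(\mu)$. Fix $\eta>0$. For each $\mu\in C$, the hypothesis $\mathcal I(\mu)=-\inf_{\{G\in\mathcal B:\ \mu\in G\}}\limsup_{\epsilon\to0}\epsilon\log\sigma_\epsilon(G)$ provides $G_\mu\in\mathcal B$ with $\mu\in G_\mu$ and $\limsup_{\epsilon\to0}\epsilon\log\sigma_\epsilon(G_\mu)\le -\mathcal I(\mu)+\eta\le -\alpha+\eta$ (if $\alpha=\infty$, the infimum is $-\infty$, so instead take $\limsup_{\epsilon\to0}\epsilon\log\sigma_\epsilon(G_\mu)\le -1/\eta$). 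The family $\{G_\mu\}_{\mu\in C}$ covers $C$, so by compactness there is a finite subcover $C\subset\bigcup_{i=1}^N G_{\mu_i}$. Then $\sigma_\epsilon(C)\le\sum_{i=1}^N\sigma_\epsilon(G_{\mu_i})\le N\max_{1\le i\le N}\sigma_\epsilon(G_{\mu_i})$, hence $\epsilon\log\sigma_\epsilon(C)\le\epsilon\log N+\max_i\epsilon\log\sigma_\epsilon(G_{\mu_i})$; letting $\epsilon\to0$ and using $\limsup_{\epsilon\to0}\max_{i\le N}g_i(\epsilon)=\max_{i\le N}\limsup_{\epsilon\to0}g_i(\epsilon)$ for finitely many functions, we obtain $\limsup_{\epsilon\to0}\epsilon\log\sigma_\epsilon(C)\le -\alpha+\eta$. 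Since $\eta>0$ is arbitrary and $\sigma_\epsilon(\Gamma)\le\sigma_\epsilon(C)$, this gives $\limsup_{\epsilon\to0}\epsilon\log\sigma_\epsilon(\Gamma)\le -\inf_{\mu\in\overline\Gamma}\mathcal I(\mu)$, i.e.\ (\ref{highb}). The main obstacle is precisely this finite-subcover passage: it is legitimate here only because $\mathcal M(K)$ is compact; in the general setting of \cite[Theorem~4.1.11]{DZ} the same argument yields the upper bound merely for compact $\Gamma$ and has to be supplemented by an exponential tightness estimate, which compactness of $\mathcal M(K)$ makes unnecessary in our case.
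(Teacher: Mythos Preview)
Your proof is correct. The paper itself does not give a proof of this proposition: it simply states it as a special case of \cite[Theorem~4.1.11]{DZ} and cites that reference. What you have written is precisely the argument underlying that theorem, specialized to the situation where the ambient space $\mathcal M(K)$ is compact (so that exponential tightness is automatic and the weak LDP upgrades to a full LDP), which is exactly the point you highlight at the end of step~(c).
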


We give our proof of Theorem \ref{ldp} using Theorem \ref{rel-J-E}. 

\begin{proof} As a base $\mathcal B$ for the topology of $\mathcal M(K)$, we 
simply take all open sets. For $\{\sigma_{\epsilon}\}$, we take the sequence of probability measures $\{\sigma_k\}$ on $\mathcal M(K)$ and we take $\epsilon =k^{-2}$. For $G\in \mathcal B$, 
$$\frac{1}{k^{2}}\log \sigma_k(G)= \frac{k-1}{k}\log J_k^Q(G)-\frac{1}{k^{2}}\log Z_k^{Q}$$
using (\ref{jkqmu}) and (\ref{sigmak}). From (\ref{w=j=i}), and the fact that (\ref{nuas}) holds,
$$\lim_{k\to \infty} \frac{1}{k^{2}}\log Z_k^{Q}=\log \delta^Q(K)= \log J^Q(\mu_{K,Q});$$ and by Theorem \ref{rel-J-E}, 
$$\inf_{G \ni \mu} \limsup_{k\to \infty} \log J_k^Q(G)=\inf_{G \ni \mu} \liminf_{k\to \infty} \log J_k^Q(G)=\log J^Q(\mu).$$
Thus by Proposition \ref{dzprop1} and Theorem \ref{rel-J-E}, $\{\sigma_k\}$ satisfies an LDP with rate function 
$$\mathcal I(\mu):=\log J^Q(\mu_{K,Q})-\log J^Q(\mu)=I^Q(\mu)-I^Q(\mu_{K,Q})$$
and speed $k^{2}$. This rate function is good since $\mathcal M(K)$ is compact.
\end{proof}

 \begin{remark} Note that the rate function is independent of the (strong) Bernstein-Markov measure $\nu$ satisfying (\ref{nuas}).

\end{remark}

\section{Measures $\nu$ of infinite mass on $K\subset {\bf S}\subset \R^3$}
In this section, we restrict to the setting of compact subsets of the two-dimensional sphere ${\bf S}$ in $\R^{3}$, of center $(0,0,1/2)$ and radius $1/2$. Then in the following sections, we use stereographic projection from ${\bf S}$ to the complex plane to derive a large deviation principle on unbounded subsets of $\C$. Now typically, on the complex plane, one would like to consider locally finite measures with infinite mass like, e.g., the Lebesgue measure.
We use the stereographic projection $T$ defined in (\ref{stereo}) which sends the north pole $P_{0}=(0,0,1)$ of ${\bf S}$ to the point at infinity in $\C$.
On the sphere ${\bf S}$ we are thus led to consider positive measures $\nu$, locally finite in ${\bf S}\setminus P_{0}$, such that 
\begin{equation}\label{cond-nu-inf}
\nu(V_{P_{0}})=\infty,\quad\text{for all neighborhoods }V_{P_{0}}\text{ of }P_{0}.
\end{equation}
The goal of this section is to extend the results from the previous sections to such measures.

Fix a compact subset $K$ of ${\bf S}$ containing $P_{0}$. To ensure the finiteness of the different quantities defined previously, some condition should be satisfied linking the measure $\nu$ and the increase of the weights $Q$ near $P_{0}$. We assume that
\begin{equation}\label{cond-nu-S}
\exists a>0,\quad\int_{K}\epsilon(x)^{a}d\nu(x)<\infty,
\end{equation}
where $\epsilon(x)$ is some nonnegative continuous function that tends to 0 as $x$ tends to $P_{0}$, and that
\begin{equation}\label{cond-eps-S}
Q(x)\geq-\log\epsilon(x),\quad\text{as }x\to P_{0}.
\end{equation}
This implies in particular that $Q(P_{0})=\infty$. We next state a weighted Bernstein-Walsh lemma on the sphere.
\begin{theorem}\label{BW-S}
Let $K$ be a closed non log-polar subset of ${\bf S}$ and $Q\in \AA(K)$. Let
\begin{equation}\label{pol-S}
p_{k}(x)=\prod_{j=1}^{k}|x-x_{j}|,\qquad x\in {\bf S},
\end{equation}
where $x_{1},\ldots,x_{k}\in {\bf S}$ and assume
$$|p_{k}(x)e^{-kQ(x)}|\leq M\quad\text{for }x\in S_{w}\setminus P\text{ where }P
\text{ is log-polar (possibly empty)}.$$
Then 
$$|p_{k}(x)|\leq M\exp(k(-U^{\mu_{K,Q}}(x)+F_{w})),\qquad x\in {\bf S},$$
and
$$|p_{k}(z)e^{-kQ(z)}|\leq M \quad\text{for }x\in K\setminus \tilde P\text{ where }
\tilde P\text{ is log-polar (possibly empty)}.$$
\end{theorem}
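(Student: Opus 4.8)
The plan is to derive Theorem~\ref{BW-S} as the spherical analogue of Theorem~\ref{BW-C}, exploiting the fact that $K\subset{\bf S}\subset\R^3$ is compact and that Theorem~\ref{frost} (in particular the Frostman inequalities in item~4) and Theorem~\ref{basics} (the domination/maximum principle for logarithmic potentials in $\R^n$) are available here. The key observation is that $\log|p_k(x)| = \sum_{j=1}^k\log|x-x_j| = -k\,U^{\sigma_k}(x)$, where $\sigma_k=\frac1k\sum_{j=1}^k\delta_{x_j}$ is a probability measure supported on ${\bf S}$; thus $g:=\tfrac1k\log(|p_k|/M)$ is (minus) a logarithmic potential of a probability measure, hence superharmonic on $\R^3$, and by construction $\Delta$-mass considerations are trivial since $\sigma_k$ has finite support (hence finite logarithmic energy is not even needed for $g$, only for the comparison measure $\mu_{K,Q}$).

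First I would set up the comparison function $h:=-U^{\mu_{K,Q}}+F_w$. By item~2 of Theorem~\ref{frost}, $I(\mu_{K,Q})<\infty$, so $U^{\mu_{K,Q}}$ is a genuine logarithmic potential of a measure of finite energy. The hypothesis gives $g(x)\le Q(x)$ for $x\in S_w\setminus P$ with $P$ log-polar, and item~4 of Theorem~\ref{frost} gives $Q(x)\le h(x)$ for all $x\in S_w$; hence $g\le h$ on $S_w$ off a log-polar set. Since $\mu_{K,Q}$ is supported on $S_w$ and puts no mass on log-polar sets (a log-polar set carries no measure of finite energy, and $\mu_{K,Q}$ has finite energy), we get $g\le h$ \emph{a.e.}-$\mu_{K,Q}$, i.e.\ a.e.-$\Delta U^{\mu_{K,Q}}$ up to the constant $-\tfrac{1}{2\pi}$ (or whatever normalization $\R^3$ logarithmic potentials carry — I would just invoke that $g\le h$ off a log-polar set and hence a.e.\ with respect to the equilibrium measure).

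Next I would apply the domination/maximum principle of Theorem~\ref{basics} (item~3, the maximum principle, or equivalently a domination principle following from items 3--4) to conclude $g\le h$ everywhere on $\R^3$, in particular on ${\bf S}$. Unwinding, this gives $|p_k(x)|\le M\exp\!\big(k(-U^{\mu_{K,Q}}(x)+F_w)\big)$ for all $x\in{\bf S}$, which is the first claimed inequality. For the second inequality, I would restrict to $x\in K$ and multiply by $e^{-kQ(x)}$: combining with the first inequality in item~4 of Theorem~\ref{frost}, namely $-U^{\mu_{K,Q}}(x)+F_w\le Q(x)$ for $x\in K\setminus\tilde P$ with $\tilde P$ log-polar, yields $|p_k(x)e^{-kQ(x)}|\le M$ for $x\in K\setminus\tilde P$.

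The main obstacle I anticipate is handling the exceptional log-polar sets cleanly: one must be sure that (a) the version of the domination/maximum principle being invoked genuinely tolerates the hypothesis holding only off a log-polar set (rather than everywhere or quasi-everywhere in the $\C$-sense), and (b) that $\mu_{K,Q}$ indeed assigns zero mass to every log-polar subset of ${\bf S}$, which is what converts "off a log-polar set" into "a.e.-$\mu_{K,Q}$." Point (b) follows because a log-polar compact set supports no measure of finite logarithmic energy while $\mu_{K,Q}$ has finite energy (Theorem~\ref{frost}, item~2), and any Borel log-polar set has all compact subsets log-polar, so inner regularity finishes it. A secondary, more technical point is that $p_k$ as defined in \eqref{pol-S} uses the Euclidean distance $|x-x_j|$ in $\R^3$ restricted to the sphere, which is exactly the kernel appearing in the $\R^3$ logarithmic potential, so $-k\,U^{\sigma_k}$ really does equal $\log|p_k|$ with no extra stereographic correction term — this is the one place where working directly on the sphere (rather than pulling back to $\C$) pays off, and I would note it explicitly. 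Everything else is a routine transcription of the proof of Theorem~\ref{BW-C}.
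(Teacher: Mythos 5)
Your overall strategy is sound and genuinely different from the paper's: the paper proves Theorem~\ref{BW-S} in one line, by observing that under the stereographic map $T$ the expressions $\prod_j|x-x_j|$ on ${\bf S}$ and the weight $Q$ correspond (via (\ref{rel-dist}) and (\ref{tildeq})) to weighted polynomials $p_ke^{-kQ}$ with a weakly admissible weight on $\C$, so the statement is a translation of Theorem~\ref{BW-C}; you instead stay in $\R^3$ and run the analogue of the proof of Theorem~\ref{BW-C} directly, using $\log|p_k|=-kU^{\sigma_k}$ for the empirical measure $\sigma_k$, the Frostman inequalities of Theorem~\ref{frost}, and the fact that the finite-energy measure $\mu_{K,Q}$ charges no log-polar set. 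Those ingredients are all correct, and the deduction of the second inequality from the first is exactly as in the plane.

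The gap is at the step where you pass from the inequality $U^{\mu_{K,Q}}\leq U^{\sigma_k}+F_w+\frac1k\log M$ holding $\mu_{K,Q}$-a.e.\ (i.e.\ on $S_w$ off a log-polar set) to its validity on all of $\R^3$. What is needed there is the \emph{domination principle} (second maximum principle) for logarithmic potentials in $\R^3$: if $\mu$ has finite energy and $U^{\mu}\leq U^{\nu}+c$ holds $\mu$-a.e., then it holds everywhere. This is not item~3 of Theorem~\ref{basics}: that item is the first (Maria-type) maximum principle, whose hypothesis is an upper bound for $U^{\nu}$ on $\mathrm{supp}\,\nu$ itself, whereas your inequality is known only on $S_w=\mathrm{supp}\,\mu_{K,Q}$, not at the points $x_1,\dots,x_k$; and the domination principle is not ``equivalent'' to, nor an immediate consequence of, items~3--4 — for the logarithmic kernel it is a separate theorem requiring mass/behaviour-at-infinity hypotheses and a genuine argument to absorb the a.e.\ exceptional set. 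The paper never states such a principle in $\R^n$; the only domination principle it provides is the one on $\C$ quoted from \cite{BL} (the Proposition in Section~3), which is precisely why the authors prove Theorem~\ref{BW-C} in the plane and obtain Theorem~\ref{BW-S} by stereographic translation rather than arguing directly on the sphere. To repair your proof you would either have to prove an $\R^3$ logarithmic domination principle for compactly supported probability measures of finite energy (true in this situation, but it must be supplied), or simply transport the statement to $\C$ via $T$ and invoke Theorem~\ref{BW-C}, which is the paper's route.
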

\begin{proof}
Using the stereographic map $T$ defined in (\ref{stereo}), this theorem is a translation of Theorem \ref{BW-C} on $\C$.
\end{proof}
We will also need a lemma related to where the $L^{p}$ norm of a weighted ``polynomial'' lives, see \cite[III, Theorem 6.1]{ST}, \cite[Theorem 6.1]{B2} for polynomials on $\C$. 
For $w=e^{-Q}$, we set
$$S_{w}^{*}=\{x\in K,~U^{\mu_{K,Q}}(x)+Q(x)\leq F_{w}\}.$$
Note that, as $U^{\mu_{K,Q}}(x)+Q(x)$ is lower semicontinuous, $S_{w}^{*}$ is a closed subset of ${\bf S}$ which, moreover, does not contain $P_{0}$. Indeed, $U^{\mu_{K,Q}}(x)$ is bounded below while $Q(x)$ tends to infinity as $x$ tends to $P_{0}$. Moreover, from Theorem \ref{Frost-C}, $S_w\subset S_w^*$.
\begin{lemma}\label{Tom}
Let $p>0$, $K$ a non log-polar compact subset of ${\bf S}$ containing $P_{0}$, $Q\in \AA(K)$  and $\nu$ a positive measure on $K$ satisfying 
(\ref{cond-nu-inf})--(\ref{cond-eps-S}). We assume that $(K,\nu,Q)$ satisfies the weighted Bernstein-Markov property. Let $N\subset K$ be a closed neighborhood of $S_{w}^{*}$. Then, there exists a constant $c>0$ independent of $k$ and $p$ such that, for all expressions $p_{k}$ of the form (\ref{pol-S}),
$$\int_{K}|p_{k}e^{-kQ}|^{p}d\nu\leq(1+\OO(e^{-ck}))\int_{N}|p_{k}e^{-kQ}|^{p}d\nu. $$
\end{lemma}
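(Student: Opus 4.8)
The plan is to show that the contribution of $K\setminus N$ to the integral $\int_K |p_k e^{-kQ}|^p d\nu$ is exponentially smaller than the contribution of $N$, exploiting that on $K\setminus S_w^*$ the weighted polynomial $|p_k e^{-kQ}|$ decays geometrically relative to its global sup, while near $P_0$ the growth of $Q$ (controlled by $\epsilon(x)$) makes the tail $\nu$-integrable. First I would invoke the weighted Bernstein-Walsh lemma on the sphere, Theorem \ref{BW-S}: if $\|p_k e^{-kQ}\|_{S_w\setminus P}\le M$ (with $P$ log-polar), then $|p_k(x)|\le M\exp(k(-U^{\mu_{K,Q}}(x)+F_w))$ for all $x\in{\bf S}$, hence
$$|p_k(x)e^{-kQ(x)}|\le M\exp\bigl(k(F_w-U^{\mu_{K,Q}}(x)-Q(x))\bigr),\qquad x\in K.$$
By definition of $S_w^*$, the exponent $F_w - U^{\mu_{K,Q}}(x)-Q(x)$ is $\le 0$ on $S_w^*$ and, since $U^{\mu_{K,Q}}+Q$ is lower semicontinuous, it is $\le -c_0 < 0$ on the complement of the closed neighborhood $N$ of $S_w^*$, for some $c_0>0$ — at least on a compact piece bounded away from $P_0$.

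The subtle point is that $K\setminus N$ need not be bounded away from $P_0$: a small ball around $P_0$ may lie outside $N$. Here I would split $K\setminus N$ into $(K\setminus N)\cap V$ and $(K\setminus N)\setminus V$ where $V$ is a small closed neighborhood of $P_0$ on which $Q(x)\ge -\log\epsilon(x)$ by (\ref{cond-eps-S}). On the compact set $(K\setminus N)\setminus V$, which is disjoint from $S_w^*$ and bounded away from $P_0$, the lower semicontinuity argument above gives $F_w - U^{\mu_{K,Q}}-Q \le -c_0$, so that integrand $\le M^p e^{-c_0 p k}$ there and the integral over this piece is $\le M^p e^{-c_0 pk}\nu((K\setminus N)\setminus V)$, which is $O(e^{-c_0 k})$ times $M^p$ once we note $\nu$ is finite on sets bounded away from $P_0$. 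On $(K\setminus N)\cap V$, I would use $|p_k(x)|\le 1$ (since the $|x-x_j|\le 1$ on the unit-diameter sphere, after possibly normalizing) together with $e^{-kQ(x)}\le \epsilon(x)^k$ to bound the integrand by $\epsilon(x)^k$; shrinking $V$ so that $\epsilon(x)\le e^{-c_1}$ there and using $\epsilon(x)^k = \epsilon(x)^a\cdot \epsilon(x)^{k-a}\le e^{-c_1(k-a)}\epsilon(x)^a$ with $a$ as in (\ref{cond-nu-S}), we get $\int_{(K\setminus N)\cap V}\epsilon^k d\nu \le e^{-c_1(k-a)}\int_K \epsilon^a d\nu = O(e^{-c_1 k})$.

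It remains to compare these exponentially small upper bounds against $\int_N |p_k e^{-kQ}|^p d\nu$ from below, and here the weighted Bernstein-Markov property enters: it gives $\|p_k e^{-kQ}\|_K \le M_k \|p_k e^{-kQ}\|_{L^p(\nu)}$ with $M_k^{1/k}\to 1$ (using the $L^p$ form, Remark \ref{lp}), and since $S_w\subset S_w^*\subset N$ one has $\|p_k e^{-kQ}\|_{S_w} = \|p_k e^{-kQ}\|_K$ up to a log-polar set — more precisely, applying Theorem \ref{BW-S} with $M := \|p_k e^{-kQ}\|_{S_w\setminus P}$ shows $\|p_k e^{-kQ}\|_K \le M$ q.e., so the sup over $K$ and over $N\supset S_w$ are essentially the same. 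Therefore $\int_N |p_k e^{-kQ}|^p d\nu \ge c\, M_k^{-p}\|p_k e^{-kQ}\|_K^p \ge c'\, r^{k}\, M^p$ for any $r<1$ and $k$ large, where $M$ is comparable to $\sup_N|p_k e^{-kQ}|$. Combining, the ratio of the $K\setminus N$ contribution to the $N$ contribution is bounded by a fixed constant times $e^{-ck}$ for $c = \min(c_0 p, c_1) - |\log r|>0$ (choosing $r$ close enough to $1$), which yields $\int_K = (1+O(e^{-ck}))\int_N$ after dividing.

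\textbf{Main obstacle.} I expect the delicate part to be precisely the handling of the neighborhood of $P_0$: ensuring that the portion of $K\setminus N$ that approaches $P_0$ contributes negligibly requires carefully using both (\ref{cond-nu-S}) and (\ref{cond-eps-S}) to convert the blow-up of $Q$ into an $e^{-ck}$ gain, and one must also confirm that $S_w^*$ (hence $N$) genuinely stays away from $P_0$ — which is exactly the observation, recorded just before the lemma, that $U^{\mu_{K,Q}}$ is bounded below while $Q\to\infty$ at $P_0$, so that $S_w^*$ is compact and disjoint from $P_0$. The log-polar exceptional sets appearing in Theorem \ref{BW-S} are harmless since they are $\nu$-null (they are polar, hence Lebesgue-null, and more to the point carry no mass for any measure of finite energy, and by the Bernstein-Markov hypothesis $\nu$ cannot charge them without destroying the $L^p$-sup comparison); this should be stated but needs no real work.
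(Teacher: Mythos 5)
Your strategy is essentially the paper's: isolate the contribution of $K\setminus N$, control it away from $P_{0}$ via Theorem \ref{BW-S} together with the definition of $S_{w}^{*}$ and upper semicontinuity, control it near $P_{0}$ via (\ref{cond-nu-S})--(\ref{cond-eps-S}), and use the weighted Bernstein--Markov property for a matching lower bound. But there is a genuine gap in the way you combine the estimates. The inequality to be proved is multiplicative, so every upper bound must be measured against the actual size of $p_{k}e^{-kQ}$; your bound near $P_{0}$ is not. There you use $|p_{k}|\leq 1$ and $e^{-kQ}\leq\epsilon^{k}$ to get an \emph{absolute} error $O(e^{-c_{1}k})$, while your lower bound for the denominator is of the form $c'r^{k}M^{p}$ with $M=\|p_{k}e^{-kQ}\|_{S_{w}}$. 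Nothing in your argument controls how small $M$ can be: since the points $x_{j}$ are arbitrary on ${\bf S}$, $M$ can perfectly well be exponentially small in $k$ (take all $x_{j}$ at one point of a set $S_{w}$ of small diameter), and then $e^{-c_{1}k}$ need not be $o(r^{k}M^{p})$. So the claimed rate $c=\min(c_{0}p,c_{1})-|\log r|>0$ does not follow as written; one would need a quantitative lower bound $M\geq e^{-Ck}$ with $C=C(K,Q)$ and then shrink $V$ (hence enlarge $c_{1}$) to beat $pC$, neither of which you do. The paper avoids this entirely by normalizing $\|p_{k}e^{-kQ}\|_{S_{w}^{*}}=1$ (legitimate, as both sides of the asserted inequality are homogeneous in $p_{k}$) and by using the Bernstein--Walsh estimate, which carries the factor $\|p_{k}e^{-kQ}\|_{S_{w}}\leq 1$, \emph{also} on the neighborhood of $P_{0}$: with $U^{\mu_{K,Q}}$ bounded below and (\ref{cond-eps-S}) this gives $|p_{k}e^{-kQ}|\leq e^{kb_{0}}\epsilon^{k}$ there, so the whole $K\setminus N$ contribution is $\leq e^{-ck}$ in the same normalization as the lower bound.

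Two further points in the same step. First, you assert $\int_{N}|p_{k}e^{-kQ}|^{p}d\nu\geq c\,M_{k}^{-p}\|p_{k}e^{-kQ}\|_{K}^{p}$ ``by Bernstein--Markov'', but the hypothesis controls the $L^{p}(\nu)$ norm over all of $K$, not over $N$; the correct route is to prove the lower bound for $\int_{K}$ and then conclude via $\int_{K}\leq\int_{N}+e^{-(c-\epsilon)k}\int_{K}$. Moreover $p_{k}$ in (\ref{pol-S}) is not a polynomial, so the $L^{p}$ form of the property cannot be applied to it directly; the paper applies the $L^{p/2}$ property (Remark \ref{lp}) to the genuine real polynomial $p_{k}^{2}$ of degree $2k$, giving $\int_{K}|p_{k}e^{-kQ}|^{p}d\nu\geq M_{2k}^{-p/2}\|p_{k}e^{-kQ}\|_{K}^{p}\geq e^{-\epsilon k}$ after normalization. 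Second, your side remark that $\nu$ cannot charge log-polar sets ``by the Bernstein--Markov hypothesis'' is unjustified, but also unnecessary: the exceptional sets in Theorem \ref{BW-S} never need to be integrated over, since the pointwise bound $|p_{k}|\leq M\exp(k(-U^{\mu_{K,Q}}+F_{w}))$ holds everywhere and the sup-norm comparison is only used in the direction $\|p_{k}e^{-kQ}\|_{K}\geq\|p_{k}e^{-kQ}\|_{S_{w}^{*}}$, which is trivial.
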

\begin{proof}
We normalize $p_{k}$ so that $\|p_{k}e^{-kQ}\|_{S_{w}^{*}}=1$. It is sufficient to show that there exists a constant $c>0$ such that for $k$ large,
\begin{equation}\label{ineg-K-N}
\int_{K\setminus N}|p_{k}e^{-kQ}|^{p}d\nu\leq e^{-ck},
\end{equation}
and that, for every $\epsilon>0$ and $k$ large,
$$\int_{K}|p_{k}e^{-kQ}|^{p}d\nu\geq e^{-\epsilon k}.$$

For the second inequality, we use the $L^{p/2}$--Bernstein-Markov property (recall Remark \ref{lp}) which gives
$$\int_{K}|p_{k}e^{-kQ}|^{p}d\nu\geq M_{2k}^{-p/2}\|p_{k}e^{-kQ}\|_{K}^{p}\geq
M_{2k}^{-p/2}\|p_{k}e^{-kQ}\|_{S_{w}^{*}}^{p}=M_{2k}^{-p/2}\geq e^{-\epsilon k},$$
for $k$ large, where we notice that $p_{k}^{2}$ is a real polynomial of degree $2k$. 

For the first inequality, we use Theorem \ref{BW-S}
and the fact that $S_{w}\subset S_{w}^{*}$. This implies that, for $x\in K$,
$$|e^{-kQ(z)}p_{k}(x)|\leq\|e^{-kQ}p_{k}\|_{S_{w}}e^{-k(U^{\mu_{K,Q}}+Q-F_{w})}\leq
e^{-k(U^{\mu_{K,Q}}+Q-F_{w})}.$$
Since $U^{\mu_{K,Q}}$ is bounded below on $K$, there exists a constant $b_{0}$ such that
$$-U^{\mu_{K,Q}}(x)-Q(x)+F_{w}\leq\log\epsilon(x)+b_{0},\quad x\in K,$$
and, as $\epsilon(x)$ tends to 0 as $x$ tends to $P_{0}$, there exists
a neighborhood $V_{P_{0}}$ of $P_{0}$ such that $e^{b_{0}}\epsilon(x)^{1/2}<1$ for $x\in V_{P_{0}}$. On the other hand, since $N$ is a closed neighborhood of $S_{w}^{*}$ and $-U^{\mu_{K,Q}}-Q$ is upper semicontinuous, there exists a constant $b_{1}>0$ such that
$$-U^{\mu_{K,Q}}(x)-Q(x)+F_{w}\leq -b_{1}<0,\quad x\in K\setminus N.
$$
From this we deduce that
\begin{align*}\int_{K\setminus N}|p_{k}e^{-kQ}|^{p}d\nu & =
\int_{V_{P_{0}}}|p_{k}e^{-kQ}|^{p}d\nu+
\int_{K\setminus (N\cup V_{P_{0}})}|p_{k}e^{-kQ}|^{p}d\nu\\
& \leq \int_{V_{P_{0}}}e^{kpb_{0}}\epsilon(x)^{kp}d\nu+e^{-kpb_{1}}\nu(K\setminus V_{P_{0}})\\
& \leq \|e^{b_{0}}\epsilon(x)^{1/2}\|^{kp}_{V_{P_{0}}}\int_{K}\epsilon(x)^{a}d\nu
+e^{-kpb_{1}}\nu(K\setminus V_{P_{0}}),
\end{align*}
for $k$ large, which implies (\ref{ineg-K-N}).
\end{proof}
We are now in a position to prove an extended version of Proposition \ref{weightedtd}.
\begin{proposition}\label{ZQ-inf}
Let $K$ be a non log-polar compact subset of ${\bf S}$ containing $P_{0}$, $Q\in \AA(K)$ and $\nu$ a positive measure on $K$ satisfying 
(\ref{cond-nu-inf})--(\ref{cond-eps-S}). We assume that $(K,\nu,Q)$ satisfies a weighted Bernstein-Markov property. Then the $L^{2}$ normalization constants $Z_{k}^{Q}$ defined in (\ref{L2}) are finite and
$$\lim_{k\to \infty}(Z_{k}^{Q})^{1/k(k-1)}=\delta^{Q}(K).$$
\end{proposition}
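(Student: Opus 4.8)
The plan is to mimic the proof of Proposition \ref{weightedtd}, being careful about the two places where infinite mass of $\nu$ could cause trouble: the easy direction (upper bound via $\limsup$) and the Fekete-point argument (lower bound via $\liminf$). First I would establish finiteness of $Z_k^Q$. By Lemma \ref{Tom} applied with $p=2$ and any closed neighborhood $N$ of $S_w^*$ with $N\subset K$ (and $P_0\notin N$), together with the fact that on $K^k$ the integrand $|VDM_k^Q(\mathbf{x})|^2$ is a product of $k$ one-variable weighted ``polynomials'' of the form (\ref{pol-S}), one reduces the integral over $K^k$ to an integral over $N^k$ by a repeated (variable-by-variable) application of the lemma, absorbing $k$ factors of $(1+\OO(e^{-ck}))$; since $N$ is compact, bounded away from $P_0$, where $Q$ is bounded below and $|p_k|$ is bounded by a quantity growing at most geometrically, and $\nu(N)<\infty$, we get $Z_k^Q<\infty$.

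Next, for the upper bound $\limsup_{k\to\infty}(Z_k^Q)^{1/k(k-1)}\leq \delta^Q(K)$: again use Lemma \ref{Tom} with $p=2$ to write
$$Z_k^Q \leq (1+\OO(e^{-ck}))^k \int_{N^k}|VDM_k^Q(\mathbf{x})|^2 d\nu(\mathbf{x}) \leq (1+\OO(e^{-ck}))^k\, \bigl(\max_{\mathbf{x}\in K^k}|VDM_k^Q(\mathbf{x})|^2\bigr)\,\nu(N)^k,$$
and since $\bigl(\max_{\mathbf{x}\in K^k}|VDM_k^Q(\mathbf{x})|^2\bigr)^{1/k(k-1)}=\delta_k^Q(K)\to \delta^Q(K)$ by item 2 of Theorem \ref{sec3} (valid on the sphere by the earlier discussion), while $\nu(N)^{k/k(k-1)}\to 1$ and $(1+\OO(e^{-ck}))^{k/k(k-1)}\to 1$, we obtain the claimed $\limsup$ bound. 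For the lower bound $\liminf_{k\to\infty}(Z_k^Q)^{1/k(k-1)}\geq \delta^Q(K)$, I would run the Fekete-point argument of Proposition \ref{weightedtd} essentially verbatim: let $\mathbf{F_k}=(f_1,\dots,f_k)$ be weighted Fekete points for $K,Q$, and peel off one variable at a time, at each stage using the extremality of Fekete points and the ($L^1$) weighted Bernstein-Markov property to replace a point $f_i$ by an integral against $\nu$. The only subtlety compared with the compact-$\R^n$ case is that at each stage the relevant one-variable weighted polynomial $p_i(x_i)=|VDM_k(\dots,x_i,\dots)|^2 e^{-2(k-1)Q(x_i)}\cdot(\text{constants})$ is again of the form (\ref{pol-S}) squared, and the weighted Bernstein-Markov hypothesis (which is over all of $K$) applies directly; the weighted Fekete points lie in $S_w\subset S_w^*$, away from $P_0$, so no divergence occurs. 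This yields $|VDM_k^Q(\mathbf{F_k})|^2 \leq (M_{2(k-1)})^k Z_k^Q$, and since $|VDM_k^Q(\mathbf{F_k})|^{2/k(k-1)}=\delta_k^Q(K)\to\delta^Q(K)$ and $M_{2(k-1)}^{1/(k-1)}\to 1$, the lower bound follows. Combining the two bounds gives $\lim_{k\to\infty}(Z_k^Q)^{1/k(k-1)}=\delta^Q(K)$.

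I expect the main obstacle to be the careful bookkeeping in applying Lemma \ref{Tom} iteratively to the $k$-fold product integral: one must fix all but one coordinate, note that the resulting one-variable object is of the form (\ref{pol-S}) (possibly after absorbing the frozen coordinates and weights into a constant), apply the lemma to gain the factor $(1+\OO(e^{-ck}))$ and localize that variable to $N$, and then iterate; one should check that the constant $c$ in Lemma \ref{Tom} can be taken uniform (it is, by the statement, independent of $k$ and $p$), so that the accumulated factor $(1+\OO(e^{-ck}))^k = 1+\OO(ke^{-ck})\to 1$ contributes nothing to the exponential rate. A minor additional point is to confirm that items of Theorem \ref{sec3} about $\delta_k^Q(K)$ and weighted Fekete points are available on the sphere ${\bf S}\subset\R^3$ with $Q\in\AA(K)$ — this is exactly the content of that theorem applied with $n=3$, so no new work is needed there.
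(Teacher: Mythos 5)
Your proof is correct and follows essentially the same route as the paper: finiteness and the upper bound via a variable-by-variable application of Lemma \ref{Tom} with $p=2$ to localize the integral to $N^k$, where $N$ is a closed neighborhood of $S_w^*$ with $\nu(N)<\infty$ and $P_0\notin N$, followed by the bound $(\delta_k^Q(K))^{k(k-1)}\nu(N)^k$; and the lower bound by running the Fekete-point/$L^1$ weighted Bernstein--Markov argument of Proposition \ref{weightedtd} unchanged. (Your aside that the weighted Fekete points lie in $S_w$ is not quite accurate, but it plays no role in the argument, which only uses extremality and the Bernstein--Markov hypothesis on $K$.)
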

\begin{proof}
In view of (\ref{cond-nu-S}) and (\ref{cond-eps-S}), it is clear that, for $k$ large, the integral defining $Z_{k}^{Q}$ is finite.
The lower bound, 
$$ \delta^{Q}(K)\leq \liminf_{k\to \infty}(Z_{k}^{Q})^{1/k(k-1)},$$
is proved as in the proof of Proposition \ref{weightedtd} by making use of the weighted Bernstein-Markov property. For the upper bound, 
\begin{equation}\label{up-Z-S}
\limsup_{k\to \infty}(Z_{k}^{Q})^{1/k(k-1)}\leq\delta^{Q}(K),
\end{equation}
we first note that the expression $|VDM_{k}^{Q}({\bf X_{k}})|^{2}$ is, in each variable, of the form $e^{-2(k-1)Q}|q|^{2}$ with $|q|$ as in (\ref{pol-S}) for $k-1$. Hence, by using Lemma \ref{Tom} with $p=2$ for each of the $k$ variables, and with $N\subset K$ a closed neighborhood of $S_{w}^{*}$ as in Lemma \ref{Tom} with $\nu(N)<\infty$ and $P_0\not \in N$, we get 
\begin{align*}
Z_{k}^{Q}=\int_{K^{k}}|VDM_{k}^{Q}({\bf X_{k}})|^{2}d\nu({\bf X_{k}}) & \leq
(1+\OO(e^{-c(k-1)}))^{k}\int_{N^{k}}|VDM_{k}^{Q}({\bf X_{k}})|^{2}d\nu({\bf X_{k}})\\
& \leq (1+\OO(e^{-c(k-1)}))^{k}(\delta^{Q}_{k}(K))^{k(k-1)}\nu(N)^{k},
\end{align*}
which implies (\ref{up-Z-S}) by taking the $k(k-1)$-th root and letting $k$ go to infinity.
\end{proof}
The next goal is to generalize Corollary \ref{johansson}. \begin{corollary}
We assume that the conditions (\ref{cond-nu-inf})--(\ref{cond-eps-S}) are satisfied and that $(K,\nu,Q)$ satisfies the weighted Bernstein-Markov property. 
Then, with the notation of Corollary \ref{johansson}, there exist a constant $c>0$ and $k^*=k^*(\eta)$ such that for all $k>k^*$, 
\begin{equation}\label{ineg-Prob}
Prob_k(K^{k}\setminus A_{k,\eta})\leq 
\Big(1-\frac{\eta}{2 \delta^{Q}(K)}\Big)^{k(k-1)}\nu(N)^{k}+\OO(e^{-ck}),
\end{equation}
where $N\subset K$ is a closed neighborhood of $S_{w}^{*}$ as in Lemma \ref{Tom} with $\nu(N)<\infty$ and $P_0\not \in N$.
\end{corollary}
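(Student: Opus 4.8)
The plan is to mimic the proof of Corollary \ref{johansson}, which in turn comes from Proposition \ref{weightedtd}, but now using the infinite-mass version Proposition \ref{ZQ-inf} together with the localization Lemma \ref{Tom} to control the tails near $P_{0}$. Recall that by definition of $Prob_k$ (see (\ref{probk})),
$$
Prob_k(K^{k}\setminus A_{k,\eta})=\frac{1}{Z_k^{Q}}\int_{K^{k}\setminus A_{k,\eta}}|VDM_k^Q({\bf X_k})|^2\,d\nu({\bf X_k}).
$$
On $K^{k}\setminus A_{k,\eta}$ the integrand is bounded pointwise by $(\delta^{Q}(K)-\eta)^{k(k-1)}$, so the numerator is at most $(\delta^{Q}(K)-\eta)^{k(k-1)}\,\nu\big(K^{k}\setminus A_{k,\eta}\big)\le(\delta^{Q}(K)-\eta)^{k(k-1)}\nu(K)^{k}$. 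But $\nu(K)=\infty$ here, so this crude bound is useless and the localization lemma is exactly what saves us.

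First I would split the region of integration as $K^{k}\setminus A_{k,\eta}=\big((K^{k}\setminus A_{k,\eta})\cap N^{k}\big)\cup\big((K^{k}\setminus A_{k,\eta})\setminus N^{k}\big)$, where $N$ is the closed neighborhood of $S_{w}^{*}$ from Lemma \ref{Tom} with $\nu(N)<\infty$ and $P_{0}\notin N$. On the first piece, which is contained in $N^{k}\setminus A_{k,\eta}$, the pointwise bound gives a contribution to the numerator of at most $(\delta^{Q}(K)-\eta)^{k(k-1)}\nu(N)^{k}$. For the second piece, $K^{k}\setminus N^{k}$ means at least one coordinate lies outside $N$; as in the proof of Proposition \ref{ZQ-inf}, in each variable $|VDM_k^Q|^2$ has the form $e^{-2(k-1)Q}|q|^2$ with $|q|$ as in (\ref{pol-S}) for $k-1$ points, so applying Lemma \ref{Tom} with $p=2$ in the offending variable(s) yields an overall bound $\OO(e^{-c(k-1)})\cdot(\delta^{Q}_{k}(K))^{k(k-1)}\nu(N)^{k-1}\nu(K\setminus N)^{?}$... more carefully, one applies Lemma \ref{Tom} variable by variable exactly as in Proposition \ref{ZQ-inf} to get $\int_{K^{k}\setminus N^{k}}|VDM_k^Q|^2 d\nu \le \big((1+\OO(e^{-c(k-1)}))^{k}-1\big)\int_{N^{k}}|VDM_k^Q|^2 d\nu\le \OO(k e^{-c(k-1)})(\delta^{Q}_{k}(K))^{k(k-1)}\nu(N)^{k}=\OO(e^{-c'k})$ for a slightly smaller constant $c'>0$, after absorbing the polynomial factor and the geometric growth $(\delta^{Q}_{k}(K))^{k(k-1)}\to\delta^{Q}(K)^{\cdots}$, which grows only exponentially in $k(k-1)$ — wait, that last point needs care, see below.

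Dividing by $Z_k^{Q}$ and using Proposition \ref{ZQ-inf}, namely $(Z_k^{Q})^{1/k(k-1)}\to\delta^{Q}(K)$, the first piece contributes at most
$$
\frac{(\delta^{Q}(K)-\eta)^{k(k-1)}\nu(N)^{k}}{Z_k^{Q}}\le\Big(\frac{\delta^{Q}(K)-\eta}{\delta^{Q}(K)-\eta/2}\Big)^{k(k-1)}\nu(N)^{k}\le\Big(1-\frac{\eta}{2\delta^{Q}(K)}\Big)^{k(k-1)}\nu(N)^{k}
$$
for $k>k^{*}(\eta)$, exactly as in Corollary \ref{johansson}; here one uses that $(\delta^{Q}(K)-\eta)/(\delta^{Q}(K)-\eta/2)\le 1-\eta/(2\delta^{Q}(K))$ and that $Z_k^{Q}\ge(\delta^{Q}(K)-\eta/2)^{k(k-1)}$ for $k$ large. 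The second piece contributes $\OO(e^{-ck})$ after dividing by $Z_k^{Q}\ge e^{-\epsilon k^2}$... and here is the subtlety and \textbf{main obstacle}: the bound $\OO(e^{-c'k})$ for the tail numerator must beat $Z_k^{Q}$, which is only exponentially large in $k(k-1)\sim k^2$, so dividing an $\OO(e^{-c'k})$ numerator by something that could be as small as $e^{-\epsilon k^{2}}$ is a problem unless the tail estimate from Lemma \ref{Tom} is genuinely \emph{relative} to the bulk integral. This is precisely why Lemma \ref{Tom} is stated in the multiplicative form $\int_K\le(1+\OO(e^{-ck}))\int_N$: the correct bookkeeping is to first write $Z_k^Q=\int_{N^k}|VDM_k^Q|^2 d\nu\cdot(1+\OO(e^{-ck}))^{k}$ via Proposition \ref{ZQ-inf}'s proof, then bound $\int_{K^k\setminus A_{k,\eta}}|VDM_k^Q|^2 d\nu\le \int_{N^k\setminus A_{k,\eta}}|VDM_k^Q|^2 d\nu+\big((1+\OO(e^{-ck}))^k-1\big)\int_{N^k}|VDM_k^Q|^2 d\nu$, and only then divide; the ratio $\int_{N^k}|VDM_k^Q|^2 d\nu/Z_k^Q=(1+\OO(e^{-ck}))^{-k}=1+\OO(e^{-ck})$ turns the second term into the clean additive error $\OO(e^{-ck})$ of (\ref{ineg-Prob}) and leaves the first term to be handled by the Corollary \ref{johansson} argument applied on $N$ (where $\nu$ is finite). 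I would carry the argument out in that order — localize, then normalize — to avoid the circular-looking division, and flag the variable-by-variable application of Lemma \ref{Tom} (identical to Proposition \ref{ZQ-inf}) as the one routine-but-essential computation.
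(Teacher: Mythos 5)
Your proposal is correct and is essentially the paper's proof: the same split of $K^{k}\setminus A_{k,\eta}$ into the part inside $N^{k}$ (handled by the pointwise bound from the definition of $A_{k,\eta}$ together with the asymptotics of $Z_{k}^{Q}$ from Proposition \ref{ZQ-inf}, exactly as in Corollary \ref{johansson}) and the part outside $N^{k}$ (handled, as you correctly flag, by the \emph{relative} multiplicative bound of Lemma \ref{Tom}, so the tail is controlled by $Z_{k}^{Q}$ itself and becomes the additive $\OO(e^{-ck})$). The only cosmetic difference is bookkeeping for the tail: the paper uses a union bound over which coordinate lies in $K\setminus N$, applies Lemma \ref{Tom} once in that coordinate and then enlarges $N$ back to $K$ to compare with $Z_{k}^{Q}$, whereas you telescope $(1+\OO(e^{-ck}))^{k}-1$ over all coordinates; both yield $\OO(ke^{-c(k-1)})=\OO(e^{-ck})$.
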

\begin{proof}
We set
$B_{k,\eta}:=K^{k}\setminus A_{k,\eta}$ and decompose the integral in
$$Prob_k(K^{k}\setminus A_{k,\eta})=\frac{1}{Z_{k}^{Q}}\int_{B_{k,\eta}}|VDM_{k}^{Q}({\bf X_{k}})|^{2}d\nu({\bf X_{k}})$$
as a sum of two integrals over $B_{k,\eta}\cap N^{k}$ and $B_{k,\eta}\cap (K^{k}\setminus N^{k})$. Recalling the definition of the set $A_{k,\eta}$, the first term is less than
$$\Big(1-\frac{\eta}{2 \delta^{Q}(K)}\Big)^{k(k-1)}\nu(N)^{k},$$
for $k$ large. The second term is less than
$$\frac{1}{Z_{k}^{Q}}\int_{K^{k}\setminus N^{k}}|VDM_{k}^{Q}({\bf X_{k}})|^{2}d\nu({\bf X_{k}})\leq\sum_{j=1}^{k}\frac{1}{Z_{k}^{Q}}\int_{U_{j}}|VDM_{k}^{Q}({\bf X_{k}})|^{2}d\nu({\bf X_{k}}),$$
where $U_{j}=K\times\cdots\times(K\setminus N)\times\cdots K$ and the subset $K\setminus N$ is in $j$-th position.
As already observed in the previous proof, the expression $|VDM_{k}^{Q}({\bf X_{k}})|^{2}$ is, in each variable, of the form $e^{-2(k-1)Q}|q|^{2}$ with $|q|$ as in (\ref{pol-S}) for $k-1$. Hence, applying Lemma \ref{Tom} with the $j$-th variable to the integral over $U_{j}$, we get the upper bound
$$\OO(e^{-c(k-1)})\sum_{j=1}^{k}\frac{1}{Z_{k}^{Q}}\int_{V_{j}}|VDM_{k}^{Q}({\bf X_{k}})|^{2}d\nu({\bf X_{k}}),$$
where $V_{j}=K\times\cdots\times N\times\cdots K$. Replacing $N$ with $K$ we finally get the upper bound $\OO(ke^{-c(k-1)})$,
which implies (\ref{ineg-Prob}) with a different $c$.
\end{proof}
The last result that needs to be extended is the first item of Lemma \ref{lem-J-JQ}, namely, that for $Q\in \AA(K)$, and $\nu$ satisfying (\ref{cond-nu-inf})--(\ref{cond-eps-S}),
\begin{equation}\label{ineq-J-infty}
\overline J^{Q}(\mu)\leq e^{-I^Q(\mu)}.
\end{equation}
With the notation of Section \ref{J-funct}, we remark that fixing $a >0$ as in (\ref{cond-nu-S}), we can write
$$\int_{\tilde G_{k}}|VDM^{Q}_{k}({\bf a})|^{2}d\nu({\bf a})=\int_{\tilde G_{k}}|VDM^{Q_{k}}_{k}({\bf a})|^{2}d\tilde\nu({\bf a}),$$
where 
$$Q_{k}(x)=Q(x)-\frac{a}{2(k-1)}Q_{+}(x),\quad
\quad\tilde\nu(x)=e^{-a Q_{+}(x)}\nu(x),$$
and $Q_{+}=\max(Q,0)$.
Observe that $\{Q_{k}\}_{k}$ is an increasing sequence of admissible weights that converges pointwise to $Q$ as $k$ tends to infinity. Also, in view of (\ref{cond-nu-S}) and (\ref{cond-eps-S}), $\tilde\nu$ is a finite measure. Since
$$\int_{\tilde G_{k}}|VDM^{Q_{k}}_{k}({\bf a})|^{2}d\tilde\nu({\bf a})\leq
\int_{\tilde G_{k}}|VDM^{Q_{k_{0}}}_{k}({\bf a})|^{2}d\tilde\nu({\bf a}),\quad k\geq k_{0},$$
we have
$$\int_{\tilde G_{k}}|VDM^{Q}_{k}({\bf a})|^{2}d\nu({\bf a})\leq \int_{\tilde G_{k}}|VDM^{Q_{k_{0}}}_{k}({\bf a})|^{2}d\tilde\nu({\bf a}),\quad k\geq k_{0}.$$
By letting $k$ go to infinity in this inequality and taking the infima over all neighborhoods $G$ of a measure $\mu$ in $\MM(K)$, we obtain
$$J^{Q}_{\nu}(\mu)\leq J^{Q_{k_{0}}}_{\tilde\nu}(\mu),$$
where, here, the subscript denotes the measure with respect to which the Vandermonde is integrated. Since $\tilde\nu$ is of finite mass, we derive from item 1. of Lemma \ref{lem-J-JQ} that
$$J^{Q}_{\nu}(\mu)\leq e^{-I^{Q_{k_{0}}}(\mu)}.$$
Letting $k_{0}$ go to infinity, and making use of
$$\int Q_{k_{0}}d\mu\to\int Qd\mu,\quad\text{as }k_{0}\to\infty,$$
which follows from the monotone convergence theorem,
we obtain (\ref{ineq-J-infty}).

From the results above and the proofs of the previous sections, one may check that
Theorem \ref{rel-J-E} extends to the measures $\nu$ considered in this section.
For future reference, we state this as a theorem.
\begin{theorem}\label{rel-JE-S}
Let $K$ be a non log-polar compact subset of the sphere ${\bf S}$ in $\R^{3}$, containing $P_{0}$, $Q\in \AA(K)$ and $\nu$ a positive measure on $K$ satisfying (\ref{cond-nu-inf})--(\ref{cond-eps-S}). Assume $\nu$ satisfies a (strong) Bernstein-Markov property. Then,
$$\log\overline J^{Q}(\mu)=\log\underline J^{Q}(\mu)=-I^{Q}(\mu).$$
\end{theorem}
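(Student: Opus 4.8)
The plan is to adapt the proof of Theorem \ref{rel-J-E} step by step, noting that all the ingredients used there have now been upgraded to the infinite-mass setting on the sphere. Recall that the proof of Theorem \ref{rel-J-E} rested on four pillars: (a) the asymptotics of the normalization constants $Z_k^Q$ (Proposition \ref{weightedtd}); (b) the concentration estimate for $Prob_k$ outside $A_{k,\eta}$ (Corollary \ref{johansson}); (c) the upper bound $\overline J^Q(\mu)\leq e^{-I^Q(\mu)}$ (item 1. of Lemma \ref{lem-J-JQ}); and (d) the approximation machinery of Section 5 (Lemma \ref{lemma-scal}, Corollary \ref{corapprox}), which only involves the compact-support measures $\mu_m$, the continuous weights $Q_m$, and the unweighted energy $I$, none of which depend on the mass of $\nu$. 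In the present section, (a) is Proposition \ref{ZQ-inf}, (b) is the Corollary giving (\ref{ineg-Prob}), and (c) is the inequality (\ref{ineq-J-infty}) just established. So the scaffolding is all in place.

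Concretely, first I would redo Lemma \ref{lem-eq-case} verbatim: fix a neighborhood $G$ of $\mu_{K,v}$, form the sets $A_{k,\eta}$, and invoke item 3. of the weighted-Vandermonde convergence theorem (the sphere version of Theorem \ref{sec3}, which holds since $K\subset{\bf S}\subset\R^3$ is a compact set in $\R^n$) to get $A_{k,\eta_j}\subset\tilde G_k$ for $j,k$ large. Then, using (\ref{ineg-Prob}) in place of Corollary \ref{johansson} and Proposition \ref{ZQ-inf} in place of Proposition \ref{weightedtd}, the same chain of inequalities gives $\log\underline J^v(\mu_{K,v})\geq\log\delta^v(K)$; here one should choose the neighborhood $N$ of $S_w^*$ with $\nu(N)<\infty$ as supplied by Lemma \ref{Tom}, and note the extra term $\OO(e^{-ck})$ in (\ref{ineg-Prob}) is negligible after taking $k(k-1)$-th roots. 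Combined with item 2. of Lemma \ref{lem-J-JQ} (whose proof is unaffected by the mass of $\nu$ since it only compares weighted and unweighted Vandermondes pointwise) and with (\ref{ineq-J-infty}) as the upper bound, this yields $\log\overline J(\mu_{K,v})=\log\underline J(\mu_{K,v})=-I(\mu_{K,v})$ for $v\in\AA(K)$.

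Next I would run the argument of Theorem \ref{rel-J-E}(i): for $\mu$ with $I(\mu)<\infty$, apply Corollary \ref{corapprox} to get continuous weights $Q_m$ with $\mu_m=\mu_{K,Q_m}\to\mu$ weakly and $I(\mu_m)\to I(\mu)$; by Remark \ref{fourfour} the Bernstein-Markov hypothesis is only needed against these continuous $Q_m$, and Theorem \ref{blooms}/strong Bernstein-Markov handles that. Upper semicontinuity of $\underline J$ (the relevant lemma carries over, its proof being ``similar to \cite[Lemma 3.1]{BL}'') then gives $-I(\mu)\leq\log\underline J(\mu)$, and with (\ref{ineq-J-infty}) we get (\ref{minunwtd}); the case $I(\mu)=\infty$ is immediate from (\ref{ineq-J-infty}). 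For part (ii), the inequality (\ref{minor-JQ}) follows exactly as before: apply Lemma \ref{lemma-scal} on $K$ (purely a statement about the compact set and the unweighted energy), use upper semicontinuity of $\overline J^Q$, item 3. of Lemma \ref{lem-J-JQ} on each $K_m$ where $Q|_{K_m}$ is continuous, and pass to the limit using items 2. and 3. of Lemma \ref{lemma-scal}; the cases $I(\mu)=\infty$ or $\mu(\{Q=\infty\})>0$ are handled verbatim using (\ref{ineq-J-infty}) and $\overline J(\mu)>-\infty$.

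The main obstacle — really the only place where the infinite mass genuinely bites — is controlling the tails near $P_0$, and that has already been isolated into Lemma \ref{Tom}, Proposition \ref{ZQ-inf}, the Corollary yielding (\ref{ineg-Prob}), and the telescoping argument behind (\ref{ineq-J-infty}). Given those, the present theorem is essentially bookkeeping: one checks that every invocation of a finite-mass result in Sections 6--7 can be replaced by its infinite-mass counterpart, and that the approximation steps only ever see compactly supported measures and continuous weights, for which nothing changes. So in the write-up I would simply state that ``one may check'' the proofs of Lemma \ref{lem-eq-case} and Theorem \ref{rel-J-E} go through mutatis mutandis, with Proposition \ref{ZQ-inf}, (\ref{ineg-Prob}), and (\ref{ineq-J-infty}) substituted at the appropriate points, and perhaps spell out only the one nontrivial substitution, namely the appearance of the set $N$ with $\nu(N)<\infty$ in the concentration estimate.
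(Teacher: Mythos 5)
Your overall plan --- substitute Proposition \ref{ZQ-inf}, the estimate (\ref{ineg-Prob}) and the inequality (\ref{ineq-J-infty}) into the arguments of Sections 6--7 --- is indeed what the paper intends by ``one may check''. But the way you execute the approximation step contains a genuine gap: you keep the finite-mass detour through the \emph{unweighted} functionals, asserting the analogues of (\ref{jversion}), (\ref{minunwtd}) and (\ref{rel-J-JQ})/(\ref{minor-JQ}) with respect to the infinite-mass measure $\nu$, and you cite (\ref{ineq-J-infty}) as the upper bound needed there. This breaks down. When $\nu(V_{P_0})=\infty$ for every neighborhood $V_{P_0}$ of $P_0$ (e.g.\ $\nu=T_*(dm)$, the motivating example), the unweighted integrals $\int_{\tilde G_k}|VDM_k|^2\,d\nu$ are infinite for every $\mu$ and every neighborhood $G$: fix $k-1$ disjoint small balls of positive finite $\nu$-mass away from $P_0$ whose centers' empirical measure lies in $G$, and let the remaining point range over a neighborhood of $P_0$; on that product set the integrand is bounded below while its $\nu^{\otimes k}$-measure is infinite. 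Hence $\overline J(\mu)=\underline J(\mu)=+\infty$, the unweighted equality (\ref{minunwtd}) is false, and (\ref{rel-J-JQ}) fails (its right-hand side is infinite while $\overline J^{Q}(\mu)\leq e^{-I^{Q}(\mu)}<\infty$). Moreover (\ref{ineq-J-infty}) cannot serve as an ``unweighted upper bound'': its derivation relies on the auxiliary measure $e^{-aQ_+}\nu$ being finite, which uses precisely that $Q$ blows up at $P_0$ as in (\ref{cond-eps-S}); the weight $0$, and likewise the bounded continuous approximating weights $Q_m$ of Corollary \ref{corapprox}, do not satisfy this, and for them $Z_k^{Q_m}(K,\nu)=\infty$, so neither Proposition \ref{ZQ-inf} nor (\ref{ineg-Prob}) applies to those weights either.

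What survives is your first step for the given weight $Q$ itself: $\log\underline J^{Q}(\mu_{K,Q})\geq\log\delta^{Q}(K)$ via Proposition \ref{ZQ-inf} and (\ref{ineg-Prob}), which combined directly with (\ref{ineq-J-infty}) already gives the \emph{weighted} equality at $\mu_{K,Q}$ with no unweighted detour. The genuine work, which your write-up skips, is the lower bound $\log\underline J^{Q}(\mu)\geq -I^{Q}(\mu)$ for general $\mu$ with $I^{Q}(\mu)<\infty$: the approximation has to be rerouted so that the infinite mass near $P_0$ never meets a bounded weight. For instance, the Lusin sets $K_m$ of Lemma \ref{lemma-scal} avoid a neighborhood of $P_0$ (since $Q|_{K_m}$ is continuous, hence bounded, while $Q\to\infty$ at $P_0$ by (\ref{cond-eps-S})); one can therefore restrict the $J^{Q}_k$-integrals to such compacts $K'$, on which $\nu|_{K'}$ is finite, note that this restriction only decreases the integrals, apply the finite-mass Theorem \ref{rel-J-E} to the restricted triple (this is where the Bernstein-Markov hypothesis on $\nu$ is actually consumed), and then pass to the limit using upper semicontinuity of $\underline J^{Q}$ and items 2.\ and 3.\ of Lemma \ref{lemma-scal}; alternatively one can compare with the finite measure $e^{-aQ_+}\nu$ as in the proof of (\ref{ineq-J-infty}). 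As written, your ``mutatis mutandis'' conceals exactly the step that cannot be taken over verbatim, and the intermediate statements you rely on there are false in the infinite-mass setting.
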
 
Also, the large deviation principle asserted in
Theorem \ref{ldp} extends.
\begin{theorem}\label{ldp-S}
Let $K$ be a compact subset of the sphere ${\bf S}$ in $\R^{3}$, containing $P_{0}$, $Q\in \AA(K)$ and $\nu$ a positive measure on $K$ satisfying (\ref{cond-nu-inf})--(\ref{cond-eps-S}) and (\ref{nuas}). Then, the large deviation principle from Theorem \ref{ldp} holds.
\end{theorem}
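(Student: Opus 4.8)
The plan is to follow exactly the template used for Theorem~\ref{ldp}, substituting the infinite-mass analogues established earlier in this section for the finite-mass ingredients. Concretely, I would again invoke Proposition~\ref{dzprop1} with base $\mathcal B$ the collection of all open subsets of $\mathcal M(K)$, with $\sigma_\epsilon = \sigma_k$ and $\epsilon = k^{-2}$. The starting identity is unchanged: for $G\in\mathcal B$,
$$\frac{1}{k^{2}}\log\sigma_k(G)=\frac{k-1}{k}\log J_k^Q(G)-\frac{1}{k^{2}}\log Z_k^{Q},$$
which follows from (\ref{jkqmu}) and (\ref{sigmak}) and requires only that $Z_k^Q$ be finite for $k$ large — this is guaranteed by Proposition~\ref{ZQ-inf} under the standing hypotheses (\ref{cond-nu-inf})--(\ref{cond-eps-S}).

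The two limits that drive the argument are now supplied by the infinite-mass versions proved in this section rather than by Proposition~\ref{weightedtd} and Theorem~\ref{rel-J-E}. First, Proposition~\ref{ZQ-inf} together with (\ref{nuas}) gives
$$\lim_{k\to\infty}\frac{1}{k^{2}}\log Z_k^Q=\log\delta^Q(K)=\log J^Q(\mu_{K,Q}),$$
where the last equality is the content of Theorem~\ref{rel-JE-S} applied to $\mu=\mu_{K,Q}$ (using $\log J^Q(\mu_{K,Q})=-I^Q(\mu_{K,Q})=-V_w$). Second, Theorem~\ref{rel-JE-S} itself yields, for every $\mu\in\mathcal M(K)$,
$$\inf_{G\ni\mu}\limsup_{k\to\infty}\log J_k^Q(G)=\inf_{G\ni\mu}\liminf_{k\to\infty}\log J_k^Q(G)=\log J^Q(\mu).$$
Plugging these into the displayed identity and passing to $\liminf$, $\limsup$, and the infimum over neighborhoods $G$ of $\mu$ shows that the hypotheses of Proposition~\ref{dzprop1} are met, so $\{\sigma_k\}$ satisfies an LDP with speed $k^{2}$ and rate function
$$\mathcal I(\mu)=\log J^Q(\mu_{K,Q})-\log J^Q(\mu)=I^Q(\mu)-I^Q(\mu_{K,Q}).$$
Goodness of $\mathcal I$ is immediate since $\mathcal M(K)$ is weakly compact (here $K\subset{\bf S}$ is compact), exactly as in the proof of Theorem~\ref{ldp}.

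There is essentially no new obstacle: all the analytic work — the weighted Bernstein–Walsh lemma on ${\bf S}$ (Theorem~\ref{BW-S}), the localization Lemma~\ref{Tom}, the finiteness and asymptotics of $Z_k^Q$ (Proposition~\ref{ZQ-inf}), the refined estimate generalizing Corollary~\ref{johansson}, the bound (\ref{ineq-J-infty}), and the extension of Theorem~\ref{rel-J-E} to Theorem~\ref{rel-JE-S} — has already been carried out earlier in this section precisely so that the proof of Theorem~\ref{ldp} transfers verbatim. The only point demanding a word of care is that $J_k^Q(G)$ is now an integral of a Vandermonde against a measure $\nu$ of infinite mass, so one must note that for $k$ large these integrals are finite (again by Proposition~\ref{ZQ-inf}, or directly from Lemma~\ref{Tom} with $p=2$), after which the identity relating $\log\sigma_k(G)$, $\log J_k^Q(G)$, and $\log Z_k^Q$ makes sense and the rest is formal. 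Thus the proof reduces to: ``The proof is identical to that of Theorem~\ref{ldp}, using Proposition~\ref{ZQ-inf} in place of Proposition~\ref{weightedtd} and Theorem~\ref{rel-JE-S} in place of Theorem~\ref{rel-J-E}.''
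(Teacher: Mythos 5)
Your proposal is correct and matches the paper's intent exactly: the paper gives no separate argument for this theorem, simply asserting that the proof of Theorem \ref{ldp} extends once Proposition \ref{ZQ-inf} (finiteness of $Z_k^Q$), hypothesis (\ref{nuas}), and Theorem \ref{rel-JE-S} replace Proposition \ref{weightedtd} and Theorem \ref{rel-J-E}, which is precisely the substitution you carry out. Your added remark on the finiteness of the integrals defining $J_k^Q(G)$ is a sensible point of care but raises no new issue.
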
 

\begin{remark} In Theorems \ref{rel-JE-S} and \ref{ldp-S}, the conclusion is valid for any $\nu$ satisfying a (strong) Bernstein-Markov property and (\ref{cond-nu-inf})--(\ref{cond-eps-S}) (with any appropriate function $\epsilon(x)$). Moreover, the rate function in Theorem \ref{ldp-S} is independent of $\nu$.

\end{remark}

\section{The $J^{Q}$ functionals on unbounded sets in $\C$} 
We return to the case of unbounded sets in $\C$; our goal is to use Theorems \ref{rel-JE-S} and \ref{ldp-S} to derive their versions in our current setting. In the sequel we will need the Bernstein-Markov property on $\C$. For $K$ a closed subset of $\C$, $\nu$ a positive measure on $K$, locally finite but possibly of infinite mass in a neighborhood of infinity, and $Q$ a weakly admissible weight as in (\ref{weak-adm}), we say that $(K,\nu,Q)$ satisfies the Bernstein-Markov property if
\begin{equation}\label{unboundedBM}
\forall p_{k}\in\PP_{k}(\C),\quad\|e^{-kQ}p_{k}\|_{K}\leq M_{k}\|e^{-kQ}p_{k}\|_{L^{2}(\nu)},\quad \text{with }\limsup_{k\to \infty}M_{k}^{1/k}=1.
\end{equation}
As in Section \ref{Sec-BM}, if (\ref{unboundedBM}) holds true for any continuous weakly admissible weight $Q$, we will say that $\nu$ satisfies a strong Bernstein-Markov property. Note that the polynomials $p_{k}$ in (\ref{unboundedBM}) are polynomials with respect to the \emph{complex variable}~$z$.
\begin{example}\label{example-BM}
For $Q$ a continuous admissible weight on $\R$, the linear Lebesgue measure $d\lambda$ provides an example of a measure with unbounded support satisfying (\ref{unboundedBM}). Indeed, from Theorem \ref{BW-C}, the sup norm of $e^{-kQ}p_{k}$ is attained on $S_{w}$ which is compact. Hence, it suffices to prove (\ref{unboundedBM}) on $S_{w}$ or any compact set containing $S_{w}$, for instance a finite interval $I$ (see Remark \ref{strongbm}). For $Q$ a continuous admissible weight on $\C$,
similar reasoning shows that planar Lebesgue measure $dm$ on $\C$ satisfies
(\ref{unboundedBM}) as well (in this case one considers the restriction of $dm$ to a closed disk).

Next, let $Q$ be an admissible weight on $K=\{x\in \R: x\geq 0\}$ which is continuous except $Q(0)=+\infty$. In this case, property (c) of Theorem I.1.3 \cite{ST} shows that $S_w$, the support of $\mu_{K,Q}$, will be compact and disjoint from the origin. Thus linear Lebesgue measure similarly satisfies (\ref{unboundedBM}). Specific examples are Laguerre weights $Q(x)=\lambda x -s\log x$ with $\lambda,s>0$ which occur in the Wishart ensemble (see \cite{hpbook}, section 5.5); and $Q(x)=c(\log x)^2$ with $c\geq 0$, occurring in the Stieltjes-Wigert ensemble (see \cite{[T]} and \cite{[ESS]}).

\end{example}
\begin{remark}\label{BM-Simeo}
For future use we observe that, if $\nu$ has infinite mass in a neighborhood of infinity, then the Bernstein-Markov property (\ref{unboundedBM}) is automatically satisfied if we restrict, for each $k$, to polynomials $p_{k}$ of \emph{exact degree} $k$ and the weight $Q$ satisfies a condition slightly stronger than weak admissibility (\ref{weak-adm}), namely
\begin{equation}\label{simeonov}
\lim_{z\in K,~|z|\to\infty}(Q(z)-\log|z|)=M<\infty.
\end{equation}
Indeed, for $p_{k}$ a monic polynomial of degree $k$, $|e^{-kQ(z)}p_{k}(z)|$ behaves like the constant $e^{-kM}>0$ as $z\to\infty$, so that its sup norm on $K$ is finite while its $L^{2}(\nu)$-norm is infinite. Hence we can take $M_{k}=1$ for each $k\geq 0$.
\end{remark}

Next, we define the (weighted) $L^{2}$ normalization constants for a closed subset $K$ of $\C$, $Q$ weakly admissible and $\nu$ a positive measure on $K$,
\begin{equation}\label{L2-C}
Z_{k}^{Q}(K,\nu):=\int_{K^{k}}|VDM_{k}^{Q}({\bf Z_k})|^{2}d\nu({\bf Z_k}),
\end{equation}
where ${\bf Z_k}:=(z_1,...,z_k)\in K^{k}$. Then we have the correspondence
$$Z_{k}^{Q}(K,\nu)=Z_{k}^{\tilde Q}(T(K),T_{*}\nu)$$
where $\tilde Q$ is defined in (\ref{tildeq}). 
To ensure the finiteness of $Z_{k}^{Q}(K,\nu)$ in case $\nu$ has infinite mass in a neighborhood of infinity, like, e.g., Lebesgue measure, we assume that
the weight $Q$ and the measure $\nu$ satisfy conditions that correspond via the inverse of $T$ to the conditions (\ref{cond-nu-S}) and (\ref{cond-eps-S}) on the sphere, namely,
\begin{equation}\label{cond-nu}
\exists a>0,\quad\int_{K}\epsilon(z)^{a}d\nu(z)<\infty,
\end{equation}
and
\begin{equation}\label{cond-eps}
Q(z)-\log|z|\geq-\log\epsilon(z),\quad\text{as }z\to\infty,
\end{equation}
where $\epsilon(z)$ is some nonnegative continuous function that tends to 0 as $z$ tends to $\infty$. Note that the weight $Q$ is then admissible in the sense of \cite{ST} or 2. of Definition \ref{admit}, that is
\begin{equation}\label{admST}
Q(z)-\log|z|\to\infty,\quad\text{as }z\to\infty.
\end{equation}
Using the inequality
$$|z_{i}-z_{j}|\leq(1+|z_{i}|)(1+|z_{j}|),$$
one may also check directly that the $Z_{k}^{Q}(K,\nu)$, $k$ large, are, indeed, finite.

In the typical example where $K=\R$ or $K=\C$ and $\nu$ is Lebesgue measure, $\epsilon(z)$ can be chosen as $|z|^{-\epsilon}$, $\epsilon>0$, and (\ref{cond-eps}) becomes the following \emph{strong admissibility} condition (recall 3. of Definition \ref{admit}):
$$
Q(z)-\log|z|\geq\epsilon\log|z|,\quad\text{as }z\to\infty.
$$

Our next result is a version of Propositions \ref{weightedtd} and \ref{ZQ-inf} on the $k(k-1)$-th root asymptotic behavior of the $L^{2}$ normalization constants for $K$ a closed subset of $\C$.
\begin{proposition}\label{ZQ-C}
Let $K$ be a nonpolar closed subset of $\C$ and $\nu$ a positive measure on $K$. Let $Q$ be a weight on $K$ which is weakly admissible if $\nu$ has finite mass and such that (\ref{cond-nu}) and (\ref{cond-eps}) are satisfied for some function $\epsilon(z)$ if $\nu$ has infinite mass in a neighborhood of infinity. We assume that $(K,\nu,Q)$ satisfies the weighted Bernstein-Markov property (\ref{unboundedBM}).
Then,
$$
\lim_{k\to \infty}(Z_{k}^{Q})^{1/k(k-1)}=\delta^{Q}(K).$$
\end{proposition}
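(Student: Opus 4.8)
The plan is to transfer the problem to the sphere $\mathbf{S}\subset\R^3$ via the stereographic map $T$, where the corresponding statements have already been established. Recall from the excerpt that we have the correspondence $Z_k^Q(K,\nu)=Z_k^{\tilde Q}(T(K),T_*\nu)$ with $\tilde Q(T(z))=Q(z)-\tfrac12\log(1+|z|^2)$ (and $\tilde Q(P_0)=M$), together with the identity $\delta^Q(K)=\exp(-V_w)=\exp(-V_{\tilde w})=\delta^{\tilde Q}(T(K))$ that follows from Theorem \ref{Frost-C} and its proof. So it suffices to verify that the hypotheses on $(K,\nu,Q)$ translate into the hypotheses of Proposition \ref{weightedtd} (finite-mass case) or Proposition \ref{ZQ-inf} (infinite-mass case) for the triple $(T(K),T_*\nu,\tilde Q)$ on the sphere.

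First I would dispose of the case where $\nu$ has finite mass. Then $T_*\nu$ is a finite positive measure on the compact set $T(K)\subset\mathbf{S}$, which is non-log-polar by the lemma in Section 3; $\tilde Q$ is admissible on $T(K)$ in the sense of Definition \ref{realad} by the correspondence stated after \eqref{tildeq} (weak admissibility of $Q$ on $K$ corresponds to admissibility of $\tilde Q$ on $T(K)$); and the weighted Bernstein-Markov property \eqref{unboundedBM} for $(K,\nu,Q)$ translates, under the relation $|VDM_k^Q(z_1,\dots,z_k)|=|VDM_k^{\tilde Q}(T(z_1),\dots,T(z_k))|$ and the correspondence of polynomials in $z$ with the relevant weighted ``polynomials'' $p_k$ of the form \eqref{pol-S} on the sphere, into the weighted Bernstein-Markov property for $(T(K),T_*\nu,\tilde Q)$. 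Hence Proposition \ref{weightedtd} applies and gives $\lim_k (Z_k^{\tilde Q})^{1/k(k-1)}=\delta^{\tilde Q}(T(K))=\delta^Q(K)$.

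In the infinite-mass case, the point is that conditions \eqref{cond-nu} and \eqref{cond-eps} on $K\subset\C$ were designed precisely to correspond, via $T^{-1}$, to conditions \eqref{cond-nu-S} and \eqref{cond-eps-S} on $\mathbf{S}$: the function $\epsilon(z)$ on $K$ pulls back to a nonnegative continuous function on $T(K)$ tending to $0$ at $P_0$, $\int_{T(K)}\epsilon(x)^a\,dT_*\nu(x)=\int_K\epsilon(z)^a\,d\nu(z)<\infty$ gives \eqref{cond-nu-S}, and \eqref{cond-eps} becomes $\tilde Q(x)\geq -\log\epsilon(x)$ near $P_0$, i.e. \eqref{cond-eps-S}; moreover $T_*\nu$ has infinite mass near $P_0$ so \eqref{cond-nu-inf} holds, and $K$ nonpolar gives $T(K)$ non-log-polar. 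The weighted Bernstein-Markov property transfers as before. Thus Proposition \ref{ZQ-inf} applies to $(T(K),T_*\nu,\tilde Q)$ and again yields $\lim_k(Z_k^Q)^{1/k(k-1)}=\delta^{\tilde Q}(T(K))=\delta^Q(K)$, completing the proof.

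The only genuine point requiring care — the ``main obstacle'' — is making the translation of the weighted Bernstein-Markov property rigorous. The polynomials $p_k$ of degree $\leq k$ in $z$ do not map to honest polynomials on $\R^3$; rather, after clearing the common denominator $\prod(1+|z_i|^2)^{1/2}$ built into the relation \eqref{rel-dist}, a monomial $z^j$ corresponds on the sphere to an expression that is a product of factors $|x-x_\ell|$ (a finite product of ``linear'' factors of the type \eqref{pol-S}) times a power of a fixed function, exactly the class of functions for which Theorem \ref{BW-S} and Lemma \ref{Tom} were formulated. One must check that the sup-norm and $L^2$-norm comparisons survive this identification — which they do, since the extra bounded factors are the same on both sides and the weight $\tilde Q$ absorbs the $\log(1+|z|^2)$ term — so the $M_k$ with $\limsup M_k^{1/k}=1$ carry over verbatim. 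Once this bookkeeping is in place, everything reduces to citing the two propositions already proved.
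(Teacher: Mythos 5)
Your proposal is correct and takes essentially the same route as the paper: push everything to the sphere via $T$, identify $Z_k^Q(K,\nu)=Z_k^{\tilde Q}(T(K),T_*\nu)$ and $\delta^Q(K)=\delta^{\tilde Q}(T(K))$, and reduce to Propositions \ref{weightedtd} and \ref{ZQ-inf}, the only delicate point being that (\ref{unboundedBM}) yields a Bernstein--Markov inequality on $T(K)$ only for expressions of the form (\ref{special-pol}), not for all real polynomials on $\R^3$. The paper resolves this exactly as your last paragraph gestures at, just stated a bit more sharply: rather than saying the Bernstein--Markov hypothesis ``carries over,'' it observes that the proofs of Proposition \ref{weightedtd} and Lemma \ref{Tom} only ever invoke the Bernstein--Markov property for such special polynomials (the one-variable slices of $|VDM_k^{\tilde Q}|^2$), so the restricted property obtained from (\ref{unboundedBM}) via $T^{-1}$ suffices.
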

\begin{proof}
Via the inverse map of $T$, the statement is essentially a simple translation of Propositions \ref{weightedtd} and \ref{ZQ-inf}. The only observation to be made is that,
for the proof of Proposition \ref{weightedtd} on the sphere, it suffices that 
a weighted Bernstein-Markov property is satisfied with respect to polynomials $p$ of the particular form
\begin{equation}\label{special-pol}
p(x)=\prod_{j=1}^{k}|x-T(z_{j})|^{2},\quad x\in {\bf S},
\end{equation}
and that this Bernstein-Markov property corresponds to (\ref{unboundedBM}) via $T^{-1}$. 
Also, in the proof of Proposition \ref{ZQ-inf}, it is sufficient to use a version of Lemma \ref{Tom} which only assumes the Bernstein-Markov property for polynomials of the form (\ref{special-pol}) (and thus only holds for such polynomials).
\end{proof}
Weighted $J$-functionals $\underline J^{Q}(\mu)$ and $\overline J^{Q}(\mu)$ can be defined on the closed subset $K$ of $\C$, with respect to a positive measure $\nu$ in $K$, as was done on compact subsets of $\R^{n}$, see Definition \ref{jwmuq}. Then,
$$\underline J^{Q}(\mu)=\underline J^{\tilde Q}(T_{*}\mu),\quad
\overline J^{Q}(\mu)=\overline J^{\tilde Q}(T_{*}\mu),$$
where the $J$-functionals on the right-hand sides involve integrals with respect to the measure $T_{*}\nu$. From this correspondence, and Theorems \ref{rel-J-E} and \ref{rel-JE-S}, we derive the following.
\begin{theorem}\label{85}
With the hypotheses of Proposition \ref{ZQ-C} and assuming that $\nu$ satisfies a strong Bernstein-Markov property on $K$, we have 
$$\log\underline J^{Q}(\mu)=\log\overline J^{Q}(\mu)=-I^{Q}(\mu).$$
\end{theorem}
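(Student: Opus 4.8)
The plan is to reduce Theorem \ref{85} to the already-established results on the sphere by transporting everything through the inverse stereographic projection $T$. The starting observation is the identity
$$|VDM_k^Q(z_1,\dots,z_k)| = |VDM_k^{\tilde Q}(T(z_1),\dots,T(z_k))|$$
recorded just before Theorem \ref{BW-C}, together with the fact that $T_*:\MM(K)\to\MM(T(K))$ is a homeomorphism onto the subset of measures not charging $P_0$, and that weakly admissible weights $Q$ on $K$ correspond exactly to admissible weights $\tilde Q$ on $T(K)$ (with $\tilde Q(P_0)=M$). From these we get, for any Borel $G\subset\MM(K)$ and its image $T_*(G)\subset\MM(T(K))$, the equality of the integrals $\int_{\tilde G_k}|VDM_k^Q({\bf a})|^2 d\nu({\bf a}) = \int_{\widetilde{T_*(G)}_k}|VDM_k^{\tilde Q}({\bf b})|^2 dT_*\nu({\bf b})$, because $T$ sends $k$-point configurations to $k$-point configurations and conjugates the empirical-measure map $j_k$. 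Taking $1/k(k-1)$-th roots, $\limsup$, $\liminf$, and infima over neighborhoods (using that $T_*$ is a homeomorphism, so neighborhoods of $\mu$ correspond to neighborhoods of $T_*\mu$) yields $\underline J^Q_\nu(\mu) = \underline J^{\tilde Q}_{T_*\nu}(T_*\mu)$ and likewise for $\overline J^Q$; this is the displayed correspondence in the paragraph preceding the theorem, which I would spell out carefully as the first step.

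The second step is to apply the sphere results to $T(K)$, $\tilde Q$, $T_*\nu$. One must check the hypotheses transport correctly. If $\nu$ has finite mass, then $T_*\nu$ has finite mass and puts no mass at $P_0$; the strong Bernstein-Markov property of $\nu$ on $K$ (in the sense of (\ref{unboundedBM}) for all continuous weakly admissible $Q$) corresponds, via the identity $\|e^{-kQ}p_k\|_K = \|e^{-k\tilde Q}(p_k\circ T^{-1})\cdot(\text{stereographic factor})\|$, to a strong Bernstein-Markov property on $T(K)$ for the relevant class of functions on the sphere — here one invokes the observation already made in the proof of Proposition \ref{ZQ-C} that it suffices to have the Bernstein-Markov estimate for the special "polynomials" $p(x)=\prod_j|x-T(z_j)|^2$, and that this class corresponds exactly to (\ref{unboundedBM}) under $T^{-1}$. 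If $\nu$ has infinite mass near infinity, then $T_*\nu$ satisfies (\ref{cond-nu-inf}) and, by construction, conditions (\ref{cond-nu-S})--(\ref{cond-eps-S}) translate into (\ref{cond-nu}) and (\ref{cond-eps}) — this correspondence is precisely how those conditions were set up. In the finite-mass case we then quote Theorem \ref{rel-J-E} applied to the compact set $T(K)\subset{\bf S}\subset\R^3$, obtaining $\log\overline J^{\tilde Q}(T_*\mu) = \log\underline J^{\tilde Q}(T_*\mu) = -I^{\tilde Q}(T_*\mu)$; in the infinite-mass case we instead quote Theorem \ref{rel-JE-S}, whose hypotheses are exactly $P_0\in T(K)$, $\tilde Q\in\AA(T(K))$, and (\ref{cond-nu-inf})--(\ref{cond-eps-S}) plus a (strong) Bernstein-Markov property.

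The third, essentially bookkeeping, step is to convert $-I^{\tilde Q}(T_*\mu)$ back to $-I^Q(\mu)$. This is exactly the content of (\ref{enerK})--(\ref{tildeq}): $I^Q(\mu) = I^{\tilde Q}(T_*\mu)$ by definition of $\tilde Q$ and the distortion formula (\ref{rel-dist}) (the extra additive terms involving $\int\log(1+|t|^2)d\mu$ cancel between the $U^\mu$ relation (\ref{pot-C-S}) and the $\tilde Q$ correction). Combining the three steps gives $\log\underline J^Q(\mu) = \log\overline J^Q(\mu) = -I^Q(\mu)$ for all $\mu\in\MM(K)$.

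I expect the main obstacle to be the careful verification that the strong Bernstein-Markov hypothesis on $K\subset\C$ really delivers what Theorems \ref{rel-J-E}/\ref{rel-JE-S} need on $T(K)$. Those theorems are stated for genuine (strong) Bernstein-Markov measures on a compact subset of $\R^n$, i.e. the estimate must hold for all real polynomials (or at least all continuous weights), whereas under $T^{-1}$ we only naturally control the special functions (\ref{special-pol}). One has to invoke Remark \ref{fourfour} — the Bernstein-Markov property is only ever used in the proof of Theorem \ref{rel-J-E} through the continuous approximating weights $Q_m$ from Corollary \ref{corapprox}, and through Lemma \ref{Tom}/Proposition \ref{ZQ-inf} in the infinite-mass case — together with the reduction noted in the proof of Proposition \ref{ZQ-C} that for those purposes only the special polynomials (\ref{special-pol}) matter. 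Making this chain of "it suffices to..." reductions airtight, and confirming that a strong Bernstein-Markov $\nu$ on $K$ (varying over all continuous weakly admissible $Q$) supplies a strong enough property on $T(K)$ for the $Q_m$-approximation argument, is where the real care is required; the rest is translation via $T$.
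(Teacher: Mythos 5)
Your proposal is correct and follows essentially the same route as the paper: the paper's proof of Theorem \ref{85} is precisely a translation of Theorems \ref{rel-J-E} and \ref{rel-JE-S} through the stereographic correspondence, together with the observation that the strong Bernstein--Markov property is only needed for the special polynomials (\ref{special-pol}) and the identification of the energies (\ref{enerK}) and (\ref{enerTK}). The extra care you flag about how the Bernstein--Markov hypothesis on $K$ feeds into the sphere results via Remark \ref{fourfour} and Proposition \ref{ZQ-C} is exactly the point the paper itself makes, so no further argument is missing.
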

\begin{proof}
The statement is a translation of Theorems \ref{rel-J-E} and \ref{rel-JE-S} on the sphere. Again, we observe that the strong Bernstein-Markov property for polynomials of the form (\ref{special-pol}) is sufficient for their proofs. We also use 
the equality of the weighted logarithmic energies (\ref{enerK}) and (\ref{enerTK}).
\end{proof}

 \noindent The conclusion is valid for all $\nu$ satisfying the hypotheses; in particular, the functional $J^Q\ (=\underline J^{Q}=\overline J^{Q}) \ =e^{-I^Q}$ for any such $\nu$.
\section{Large deviation principle for unbounded sets in $\C$} 
 A large deviation principle in the spirit of Theorem \ref{ldp} for compact subsets of $\C^m, \ m\geq 1$ has been obtained in \cite{PELD} using the methods of this paper (see also \cite{VELD}). Yattselev \cite{Y} has proved an LDP associated to a specific type of weight on $\C$;  
he uses Lebesgue measure on $\C$. The large deviation principle for strongly admissible weights $Q$ on all of $\C$ with Lebesgue measure can be found in the book of Hiai and Petz \cite{hpbook}. There they extend the method of Ben Arous and Guionnet \cite{BG}.   
Here, we will utilize the results from the previous sections to establish a LDP in the setting of a closed set $K$ in $\C$, not necessarily bounded, with a weakly admissible weight $Q$ and an appropriate Bernstein-Markov measure. 
The proof is based on a standard contraction principle in LDP theory: 
\begin{theorem}[{\cite[Theorem 4.2.1]{DZ}}]\label{contract}
If $\{P_n\}$ is a sequence of probability measures on a Polish space $X$ satisfying an LDP with speed $\{a_n\}$ and rate function ${\mathcal  I}$, $Y$ is another Polish space and $f:X \to Y$ is a continuous map, then $\{Q_n:=f_*P_n\}$ satisfies an LDP on $Y$ with the same speed and with rate function 
\begin{equation}\label{contract} {\mathcal  J}(y):=\inf \{{\mathcal  I}(x):x\in X, 
                   f(x)=y\}. 
\end{equation} 
\end{theorem}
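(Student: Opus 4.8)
This is the classical contraction principle, and I would prove it exactly as in large deviations textbooks, using only that $X$ and $Y$ are Polish, that $f$ is continuous, and that $\mathcal I$ is a \emph{good} rate function on $X$ (lower semicontinuous with compact sublevel sets --- this is the standing hypothesis of \cite[Theorem 4.2.1]{DZ} and it holds in each application we make, since there the relevant space of probability measures is compact). There are three things to establish: that $\mathcal J$ is a good rate function on $Y$; the large-deviation upper bound for $\{Q_n\}$ on closed subsets of $Y$; and the lower bound on open subsets. The second and third are essentially immediate translations of the LDP for $\{P_n\}$; the first is the only step with any content.

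\emph{Step 1: $\mathcal J$ is a good rate function.} The key point is the identity, for each $\alpha\ge 0$,
$$\{y\in Y:\mathcal J(y)\le\alpha\}=f\bigl(\{x\in X:\mathcal I(x)\le\alpha\}\bigr).$$
The inclusion ``$\supseteq$'' is trivial. For ``$\subseteq$'', suppose $\mathcal J(y)\le\alpha$; then for every $\epsilon>0$ the set $C_\epsilon:=f^{-1}(\{y\})\cap\{\mathcal I\le\alpha+\epsilon\}$ is nonempty, and it is compact because $f^{-1}(\{y\})$ is closed ($Y$ is Hausdorff) and $\{\mathcal I\le\alpha+\epsilon\}$ is compact by goodness of $\mathcal I$. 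The family $\{C_\epsilon\}_{\epsilon>0}$ is decreasing, so the finite intersection property gives $\bigcap_{\epsilon>0}C_\epsilon=f^{-1}(\{y\})\cap\{\mathcal I\le\alpha\}\ne\emptyset$; any $x$ in it has $f(x)=y$ and $\mathcal I(x)\le\alpha$, so $y\in f(\{\mathcal I\le\alpha\})$ (and incidentally the infimum defining $\mathcal J(y)$ is attained whenever finite). The right-hand side of the displayed identity is the continuous image of a compact set, hence compact; thus every sublevel set of $\mathcal J$ is compact, a fortiori closed, so $\mathcal J$ is lower semicontinuous with compact sublevel sets, i.e. a good rate function on $Y$.

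\emph{Steps 2 and 3: the LDP bounds.} Here I would use the elementary identity $\inf_{y\in A}\mathcal J(y)=\inf\{\mathcal I(x):x\in X,\ f(x)\in A\}=\inf_{x\in f^{-1}(A)}\mathcal I(x)$, valid for any $A\subseteq Y$ (with $\inf\emptyset=+\infty$), together with $Q_n(A)=P_n(f^{-1}(A))$ and the fact that continuity of $f$ makes $f^{-1}(C)$ closed for $C$ closed and $f^{-1}(G)$ open for $G$ open. Thus for closed $C\subseteq Y$,
$$\limsup_{n\to\infty}\frac1{a_n}\log Q_n(C)=\limsup_{n\to\infty}\frac1{a_n}\log P_n(f^{-1}(C))\le-\inf_{x\in f^{-1}(C)}\mathcal I(x)=-\inf_{y\in C}\mathcal J(y),$$
and for open $G\subseteq Y$,
$$\liminf_{n\to\infty}\frac1{a_n}\log Q_n(G)=\liminf_{n\to\infty}\frac1{a_n}\log P_n(f^{-1}(G))\ge-\inf_{x\in f^{-1}(G)}\mathcal I(x)=-\inf_{y\in G}\mathcal J(y),$$
both directly from the LDP for $\{P_n\}$ with speed $\{a_n\}$ and rate $\mathcal I$. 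These are the upper and lower LDP bounds for $\{Q_n\}$ with speed $\{a_n\}$ and good rate function $\mathcal J$, and the general measurable-set form of Definition \ref{equivform} follows from them by the usual interior/closure sandwich. The only place any care is needed is the attainment of the infimum and the resulting compactness in Step 1, which is precisely where goodness of $\mathcal I$ --- rather than mere lower semicontinuity --- enters; in its absence one would have to replace $\mathcal J$ by its lower semicontinuous regularization. Since this is a textbook result, in the paper we simply cite \cite[Theorem 4.2.1]{DZ}.
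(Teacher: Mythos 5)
Your proof is correct and is the standard textbook argument; the paper itself offers no proof of this statement, simply citing \cite[Theorem 4.2.1]{DZ}, and your write-up faithfully reproduces that proof, correctly isolating goodness of $\mathcal I$ as the one hypothesis doing real work (in the compactness/attainment argument of Step 1) and noting that it holds automatically in the paper's application since there $X=\mathcal M(T(K))$ with $T(K)$ compact.
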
                   
For $K$ a closed, possibly unbounded, subset of $\C$, $\nu$ a locally finite measure on $K$, and $Q$ a weakly admissible weight on $K$, we define the measure $Prob_{k}$ on $K^{k}$ as in (\ref{probk}) for $K$ in $\R^{n}$ {and $j_k:K^k\to \mathcal M(K)$ as in (\ref{jk}).

The statement of the large deviation principle is as follows.
\begin{theorem} \label{ldp2} 
Assume $(K,\nu,Q)$ satisfies the weighted Bernstein-Markov property (\ref{unboundedBM}).
If $\nu$ has finite mass, we also assume that $(K,\nu)$ satisfies a strong Bernstein-Markov property while if $\nu$ has infinite mass in a neighborhood of infinity, we assume that (\ref{cond-nu}) and (\ref{cond-eps}) are satisfied for some function $\epsilon(z)$.
Then the sequence $\{\sigma_k=(j_k)_*(Prob_k)\}$ of probability measures on $\mathcal M(K)$ satisfies a {\bf large deviation principle} with speed $k^{2}$ and good rate function $\mathcal I:=\mathcal I_{K,Q}$ where, for $\mu \in \mathcal M(K)$,
\begin{equation}\label{rate}
\mathcal I(\mu):=\log J^Q(\mu_{K,Q})-\log J^Q(\mu)=I^Q(\mu)-I^Q(\mu_{K,Q}).
\end{equation}
 \end{theorem}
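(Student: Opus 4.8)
The strategy is to transport the problem to the sphere ${\bf S}\subset\R^3$ via the inverse stereographic projection $T$, apply the large deviation principle already established there (Theorem \ref{ldp-S}), and then pull back to $\C$ using the contraction principle (Theorem \ref{contract}). The key structural facts are: $T_*:\MM(K)\to\MM(T(K))$ is a homeomorphism onto the measures not charging $P_0$; the weighted Vandermondes correspond, $|VDM_k^Q({\bf Z_k})| = |VDM_k^{\tilde Q}(T(z_1),\dots,T(z_k))|$; the normalization constants satisfy $Z_k^Q(K,\nu)=Z_k^{\tilde Q}(T(K),T_*\nu)$; the weighted energies match, $I^Q(\mu)=I^{\tilde Q}(T_*\mu)$; and weakly admissible $Q$ on $K$ corresponds to admissible $\tilde Q$ on $T(K)$ (with $\tilde Q(P_0)=M$). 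The hypotheses (\ref{cond-nu}) and (\ref{cond-eps}) are precisely the pullbacks of (\ref{cond-nu-S}) and (\ref{cond-eps-S}), and the Bernstein-Markov property (\ref{unboundedBM}) for polynomials of the form (\ref{special-pol}) corresponds to the weighted Bernstein-Markov property on $T(K)$.

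First I would set $\hat K := T(K)\cup\{P_0\}$, a compact subset of ${\bf S}$ containing $P_0$, and $\hat\nu := T_*\nu$, which is a positive measure, locally finite on ${\bf S}\setminus P_0$, and satisfies (\ref{cond-nu-inf})--(\ref{cond-eps-S}) (when $\nu$ has infinite mass near infinity) or is a finite strong Bernstein-Markov measure (when $\nu$ has finite mass). In either case, by Proposition \ref{ZQ-C}, $\lim_k (Z_k^Q)^{1/k(k-1)} = \delta^Q(K) = \delta^{\tilde Q}(\hat K) = \exp(-V_w)$, so condition (\ref{nuas}) holds on the sphere side. Therefore Theorem \ref{ldp-S} applies: the pushforward measures $\hat\sigma_k := (\hat\jmath_k)_*(\widehat{Prob}_k)$ on $\MM(\hat K)$, where $\widehat{Prob}_k$ is built from $|VDM_k^{\tilde Q}|^2$ and $\hat\nu$, satisfy an LDP with speed $k^2$ and good rate function $\hat{\mathcal I}(\hat\mu) = I^{\tilde Q}(\hat\mu) - I^{\tilde Q}(\mu_{\hat K,\tilde Q})$.

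Next I would observe that $\sigma_k = (T_*^{-1})_* \hat\sigma_k$ in an appropriate sense — more precisely, because $Prob_k$ on $K^k$ corresponds exactly to $\widehat{Prob}_k$ on $\hat K^k$ under the coordinatewise map $T$, and $j_k = T_*^{-1}\circ \hat\jmath_k \circ (T\times\cdots\times T)$, the measure $\sigma_k = (j_k)_*(Prob_k)$ is the pushforward of $\hat\sigma_k$ under the continuous map $T_*^{-1}$ restricted to those measures in $\MM(\hat K)$ not charging $P_0$. Since $\widehat{Prob}_k$ (being absolutely continuous with respect to $\hat\nu^{\otimes k}$, which does not charge $P_0$) gives full mass to point configurations avoiding $P_0$, $\hat\sigma_k$ is carried by the subset of $\MM(\hat K)$ of measures putting no mass at $P_0$, on which $T_*^{-1}$ is defined and continuous into $\MM(K)$. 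Then the contraction principle (Theorem \ref{contract}) yields that $\{\sigma_k\}$ satisfies an LDP on $\MM(K)$ with speed $k^2$ and rate function $\mathcal I(\mu) = \inf\{\hat{\mathcal I}(\hat\mu) : T_*^{-1}\hat\mu = \mu\}$; but this infimum is over a single point $\hat\mu = T_*\mu$, so $\mathcal I(\mu) = \hat{\mathcal I}(T_*\mu) = I^{\tilde Q}(T_*\mu) - I^{\tilde Q}(\mu_{\hat K,\tilde Q}) = I^Q(\mu) - I^Q(\mu_{K,Q})$, using $T_*\mu_{K,Q} = \mu_{\hat K,\tilde Q}$ (from the proof of Theorem \ref{Frost-C}) and the energy correspondence. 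The equality with $\log J^Q(\mu_{K,Q}) - \log J^Q(\mu)$ then follows from Theorem \ref{85}. Goodness of $\mathcal I$ follows since $\hat{\mathcal I}$ is good and $T_*^{-1}$ pushes compact sublevel sets (avoiding $P_0$, hence inside the domain of $T_*^{-1}$) to compact sets.

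The main obstacle, and the step requiring the most care, is the domain-of-definition issue for the contraction map: $T_*^{-1}$ is only defined on the subset of $\MM(\hat K)$ consisting of measures not charging $P_0$, which is not closed in $\MM(\hat K)$, so Theorem \ref{contract} does not apply verbatim to a globally continuous $f$. The resolution is to note that this subset is a Polish space in its own right (it is a $G_\delta$ subset of the Polish space $\MM(\hat K)$, being $\{\hat\mu:\hat\mu(\{P_0\})=0\} = \bigcap_m \{\hat\mu : \int\phi_m\, d\hat\mu < 1/m\}$ for suitable continuous bump functions $\phi_m$ increasing to $\mathbf 1_{\{P_0\}}$-type behavior), that $\hat\sigma_k$ is supported there for every $k$, and that the LDP for $\{\hat\sigma_k\}$ restricts to an LDP on this Polish subspace with the restricted (still good) rate function — here one must check that $\mu_{\hat K,\tilde Q}$ itself lies in the subspace, which is exactly the statement that the weighted equilibrium measure charges no point, guaranteed by $I(\mu_{\hat K,\tilde Q})<\infty$ in Theorem \ref{frost}. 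On this subspace $T_*^{-1}$ is genuinely continuous into $\MM(K)$, so the contraction principle applies cleanly. One should also double-check the measurability/continuity of $j_k$ into $\MM(K)$ with its weak topology (duality with bounded continuous $f$ such that $\lim_{|z|\to\infty}f(z)$ exists), which again is immediate from the $T$-correspondence described after Lemma \ref{pot-unbdd}.
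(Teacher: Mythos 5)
Your overall architecture coincides with the paper's: transport the ensemble to the sphere via $T$, invoke the LDP there (Theorems \ref{ldp} and \ref{ldp-S}), and pull back through the contraction principle, identifying the rate function via the energy correspondence and Theorem \ref{85}. Your handling of the domain issue for $(T^{-1})_*$ (restricting to the $G_\delta$ set of measures not charging $P_0$, on which $T_*^{-1}$ is continuous, and noting the equilibrium measure lies there) is in fact more careful than the paper's one-line appeal to ``the homeomorphism $(T^{-1})_*$''; the restriction of the sphere LDP to that subspace is legitimate because any measure charging $P_0$ has infinite logarithmic energy, so the rate function is $+\infty$ off the subspace.

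There is, however, a genuine gap in your verification of the sphere-side hypotheses. Theorems \ref{ldp} and \ref{ldp-S} (see the remark following the latter) require the measure on $T(K)$ to satisfy a (strong) Bernstein--Markov property, since this is what feeds the identity $\log J^{\tilde Q}=-I^{\tilde Q}$ of Theorems \ref{rel-J-E} and \ref{rel-JE-S}; and this is precisely what is \emph{not} available under the hypotheses of Theorem \ref{ldp2}. In the infinite-mass case no strong Bernstein--Markov property of $(K,\nu)$ is assumed at all, and even in the finite-mass case the assumed strong Bernstein--Markov property of $\nu$ on $\C$ concerns holomorphic polynomials, which does not transfer to a Bernstein--Markov property for arbitrary real polynomials on $T(K)$; so your assertions that $T_*\nu$ ``is a finite strong Bernstein-Markov measure'' on the sphere and that ``Theorem \ref{ldp-S} applies'' beg the question. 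The paper's proof is devoted exactly to closing this hole: one observes that the Bernstein--Markov property on $T(K)$ is only needed for Vandermonde-type expressions, i.e.\ polynomials of the special form (\ref{special-pol}), of exact (maximal) degree and corresponding via $T$ to holomorphic polynomials, and only for the continuous weights $Q_m=-U^{\mu_m}$ arising from Corollary \ref{corapprox}; these weights pull back to weights on $K$ satisfying (\ref{simeonov}) with $M=-U^{\mu_m}(P_0)<\infty$, so that when $\nu$ has infinite mass near infinity Remark \ref{BM-Simeo} makes the required property automatic (one may take $M_k=1$), while in the finite-mass case the strong Bernstein--Markov hypothesis on $\C$ suffices after this reduction. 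Without this argument your appeal to the sphere LDP is not justified under the stated hypotheses, and this is the main content of the paper's proof that your proposal omits.
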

 
  \noindent We emphasize again that, as in Theorem \ref{ldp-S}, the rate function is independent of the measure $\nu$. 
 \begin{proof}
We apply Theorem \ref{contract} to the homeomorphism $f=(T^{-1})_*$: thus to prove an LDP in the setting of a closed set $K$ in $\C$, not necessarily bounded, with a weakly admissible weight $Q$, it suffices, via this contraction principle, to use an LDP in the setting of a compact set $T(K)$ in ${\bf S}\subset \R^3$ with the admissible weight $\tilde Q$. This we have from Theorems \ref{ldp} and \ref{ldp-S}. 
%
In case $\nu$ is
of infinite mass in a neighborhood of infinity, we observe that the strong Bernstein-Markov property of $(K,\nu)$ is not needed.
Indeed, the corresponding Bernstein-Markov property on $T(K)$ is only needed for polynomials that are Vandermonde expressions, hence of maximal degree, and for the weights $Q_{m}$ appearing in Corollary \ref{corapprox}. These weights are of the form $-U^{\mu_{m}}$ with $\mu_{m}\in\MM(T(K))$ and the corresponding weights on $K$ satisfy (\ref{simeonov}) with 
$M=-U^{\mu_{m}}(P_{0})<\infty$,
so that Remark \ref{BM-Simeo} applies.
Finally, the rate function $\II(\mu)$ is good because $I^{Q}(\mu)=I^{\tilde Q}(T_{*}\mu)$, the energy $I^{\tilde Q}$ is lower semicontinuous on the compact set $\MM({\bf S})$, and $T_{*}$ is a homeomorphism.
\end{proof}

\begin{remark}\label{appldp} In particular (see Example \ref{example-BM}), we have a large deviation principle on $K=\{x\in \R: x\geq 0\}$ with Lebesgue measure for the Laguerre weights as well as for the weights occurring in the Stieltjes-Wigert ensembles.

\end{remark}
\section{Applications: $\beta$ ensembles}\label{appli}
Let $K$ be a closed subset of $\C$, $\nu$ a positive measure on $K$, and $Q$ a weakly admissible weight on $K$.
Classical models in random matrix theory involve probability distributions on $K^{k}$ of the form
\begin{equation}\label{distr-rmt}
\frac{1}{\hat Z_{\beta,k}^{Q}}\prod_{1\leq i<j\leq k}|z_{i}-z_{j}|^{2\beta}\prod_{i=1}^{k}e^{-2kQ(z_{i})} d\nu (z_i),
\end{equation}
where $\beta>0$ and the normalization constant $\hat Z_{\beta,k}^{Q}$ is 
\begin{equation}\label{free-ener}
\hat Z_{\beta,k}^{Q}=\int_{K^{k}}\prod_{1\leq i<j\leq k}|z_{i}-z_{j}|^{2\beta}
\prod_{i=1}^{k}e^{-2kQ(z_{i})}d\nu(z_{i}).
\end{equation}
(We caution the reader that in \cite{H} and \cite{AGZ} the $2\beta$ is replaced by $\beta$). The probability distribution (\ref{distr-rmt}) and normalization constant $\hat Z_{\beta,k}^{Q}$ differ from the distribution in (\ref{probk}) and the $L^{2}$ normalization constant $Z_{k}^{Q}$, defined in (\ref{L2-C}), by the exponent $\beta$ and an additional factor
$\prod_{i}e^{-2Q(x_{i})}$ in its integrand.
One may check that all results from the previous sections remain true, with appropriate modifications, when we consider (\ref{distr-rmt}) and (\ref{free-ener}).
Actually, writing the products in (\ref{distr-rmt}) and (\ref{free-ener}) as the square of a  weighted Vandermonde to the power $\beta$, the main modification consists in replacing the weight $Q$ with the weight $Q/\beta$ and to
use the Bernstein-Markov property in $L^{\beta}$ instead of $L^{1}$ as was done in Section \ref{Sec-BM}.
To be precise, because of the factor $k$, instead of $k-1$, in the exponential factors of (\ref{distr-rmt}) and (\ref{free-ener}), the Bernstein-Markov property to be satisfied for a given weight $Q$ here is
\begin{equation}\label{BM-free}
\forall p_{k}\in\PP_{k}(\C),\quad\|e^{-(k+1)Q}p_{k}\|_{K}\leq M_{k}\|e^{-(k+1)Q}p_{k}\|_{L^{2}(\nu)},\quad \text{with }\limsup_{k\to \infty}M_{k}^{1/k}=1.
\end{equation}
This property is slightly weaker than (\ref{unboundedBM}) as it concerns only polynomials in $\PP_{k}(\C)$ instead of $\PP_{k+1}(\C)$, but it will make a minor difference in the assumptions of the large deviation principle because Remark \ref{BM-Simeo} no longer applies.

When $\nu$ has infinite mass in a neighborhood of infinity, the conditions (\ref{cond-nu}) and (\ref{cond-eps}) become
\begin{equation}\label{cond-nu-free}
\exists a>0,\quad\int_{K}\epsilon(z)^{a}d\nu(z)<\infty,
\end{equation}
and
\begin{equation}\label{cond-eps-free}
Q(z)-\beta\log|z|\geq-\log\epsilon(z),\quad\text{as }z\to\infty,
\end{equation}
where $\epsilon(z)$ is some nonnegative continuous function that tends to 0 as $z$ tends to $\infty$. 

Based on the above remarks, one may check that we have the following analogue of Proposition  \ref{ZQ-C} concerning the asymptotics of $\hat Z_{\beta,k}^{Q}$.

\begin{proposition}
Let $K$ be a nonpolar closed subset of $\C$ and $\nu$ a positive measure on $K$. Let $Q$ be a weight on $K$ such that $Q/\beta$ is weakly admissible if $\nu$ has finite mass and such that (\ref{cond-nu-free}) and (\ref{cond-eps-free}) are satisfied for some function $\epsilon(z)$ if $\nu$ has infinite mass in a neighborhood of infinity. We assume that $(K,\nu,Q/\beta)$ satisfies the weighted Bernstein-Markov property (\ref{BM-free}).
Then
$$
\lim_{k\to \infty}(\hat Z_{\beta,k}^{Q})^{1/k(k-1)}=(\delta^{Q/\beta}(K))^{\beta}.$$
\end{proposition}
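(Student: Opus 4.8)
The plan is to reduce the statement to the already-established Proposition~\ref{ZQ-C} by the standard trick of absorbing the parameter $\beta$ into the weight. Write
$$
\prod_{1\leq i<j\leq k}|z_i-z_j|^{2\beta}\prod_{i=1}^k e^{-2kQ(z_i)}
=\Bigl(\prod_{1\leq i<j\leq k}|z_i-z_j|^2\,\prod_{i=1}^k e^{-2k Q(z_i)/\beta}\Bigr)^{\beta},
$$
so that, setting $\hat Q:=Q/\beta$, the integrand of $\hat Z_{\beta,k}^Q$ is $|VDM_k(\mathbf Z_k)|^{2\beta}\prod_i e^{-2k\hat Q(z_i)\beta}$. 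The factor $k$ (rather than $k-1$) in the exponential is handled exactly as in the passage from (\ref{probk}) to (\ref{distr-rmt}): one writes $e^{-2k\hat Q(z_i)}=e^{-2(k-1)\hat Q(z_i)}\cdot e^{-2\hat Q(z_i)}$ and notes that the extra bounded factor $\prod_i e^{-2\hat Q(z_i)}$ (bounded above since $\hat Q$ is bounded below, and controlled below on the relevant compact region) does not affect the $k(k-1)$-th root limit. This is why the Bernstein--Markov property needed is (\ref{BM-free}) with $e^{-(k+1)Q}$ rather than (\ref{unboundedBM}).

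First I would set up the correspondence with the sphere: via the inverse stereographic map $T$, the problem transports to a compact set $T(K)\subset\mathbf S$ with the admissible weight $\widetilde{\hat Q}$ obtained from $\hat Q$ as in (\ref{tildeq}), and the measure $T_*\nu$ (finite, or of infinite mass near $P_0$ according to the two cases). Then I would run the two-sided estimate on the $k(k-1)$-th root of $\hat Z_{\beta,k}^Q$ just as in Propositions~\ref{weightedtd} and~\ref{ZQ-inf}, but with $L^\beta$-type Bernstein--Markov estimates in place of $L^1$: the upper bound $\limsup (\hat Z_{\beta,k}^Q)^{1/k(k-1)}\le (\delta^{\hat Q}(K))^\beta$ follows from bounding the integrand by its supremum (the weighted Fekete value) times $\nu(N)^k$ on a neighborhood $N$ of $S_{\hat w}^*$, using the analogue of Lemma~\ref{Tom} to discard the complement of $N$ when $\nu$ has infinite mass; the lower bound follows by restricting the integral near weighted Fekete points and applying the weighted Bernstein--Markov property (\ref{BM-free}) variable by variable, exactly as in the proof of Proposition~\ref{weightedtd}. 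Recall from Theorem~\ref{sec3}(2) (transported to $\C$) that $\lim_k\delta_k^{\hat Q}(K)=\delta^{\hat Q}(K)=\exp(-V_{\hat w})$, which provides the target constant.

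The point where care is genuinely needed is the bookkeeping of exponents and the verification that the conditions (\ref{cond-nu-free})--(\ref{cond-eps-free}) are precisely what is required for the infinite-mass case: one must check that $\hat Q=Q/\beta$ is weakly admissible (finite-mass case) exactly when the stated hypothesis holds, and that (\ref{cond-eps-free}), i.e. $Q(z)-\beta\log|z|\ge-\log\epsilon(z)$, translates under $T^{-1}$ into the sphere condition (\ref{cond-eps-S}) for the weight $\widetilde{\hat Q}$, so that the analogue of Lemma~\ref{Tom} (for polynomials of the special Vandermonde form (\ref{special-pol})) is applicable. The subtlety flagged in the text is that Remark~\ref{BM-Simeo} no longer applies, because the relevant polynomials now carry the factor $e^{-(k+1)Q}$ and are of degree $k$, not $k+1$; hence one genuinely needs the hypothesis that $(K,\nu,Q/\beta)$ satisfies (\ref{BM-free}) and cannot get it for free from infinite mass. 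Once these identifications are made, the conclusion $\lim_k(\hat Z_{\beta,k}^Q)^{1/k(k-1)}=(\delta^{Q/\beta}(K))^\beta$ is immediate from the $\beta$-th power of the corresponding statement in Proposition~\ref{ZQ-C} applied to $\hat Q$. The main obstacle, such as it is, is not conceptual but organizational: making sure every estimate in the chain survives being raised to the power $\beta$ and that the $L^\beta$ (equivalently, by Remark~\ref{lp}, $L^{p}$ for appropriate $p$) Bernstein--Markov estimates are invoked with the correct exponent at each step.
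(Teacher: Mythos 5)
Your plan is correct and is essentially the paper's own (largely sketched) argument: the paper likewise absorbs $\beta$ into the weight $Q/\beta$, invokes the $L^{\beta}$/$L^{2\beta}$ form of the Bernstein--Markov property (\ref{BM-free}) precisely to accommodate the exponent $k$ in place of $k-1$, transfers to the sphere via $T$ under (\ref{cond-nu-free})--(\ref{cond-eps-free}), and reruns the proofs of Propositions \ref{weightedtd} and \ref{ZQ-inf} (i.e., the proof of Proposition \ref{ZQ-C}). Your handling of the extra per-variable factor $e^{-2\hat Q}$ (harmless for the upper bound since $\hat Q$ is bounded below, and controlled at the weighted Fekete points for the lower bound) and your identification that Remark \ref{BM-Simeo} no longer applies, so that (\ref{BM-free}) is a genuine hypothesis, match the modifications the paper describes before the proposition.
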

The following large deviation principle, an analogue of Theorem \ref{ldp2}, also holds true.
\begin{theorem} \label{ldp-free} 
Let $Q/\beta$, $\beta>0$, be a weakly admissible weight on $K$ such that $(K,\nu,Q/\beta)$ satisfies the weighted Bernstein-Markov property (\ref{BM-free}).
We assume that $(K,\nu)$ satisfies a strong Bernstein-Markov property and, in addition, if $\nu$ has infinite mass in a neighborhood of infinity, we assume that (\ref{cond-nu-free}) and (\ref{cond-eps-free}) are satisfied for some function $\epsilon(z)$.
Then the sequence 
of probability measures $\tilde\sigma_{k}$ on $\mathcal M(K)$, defined so that for a Borel set $G\subset\MM(K)$,
$$\tilde\sigma_{k}(G):=\frac{1}{\hat Z_{\beta,k}^{Q}}\int_{\tilde G_k}
\prod_{1\leq i<j\leq k}|z_{i}-z_{j}|^{2\beta}
\prod_{i=1}^{k}e^{-2kQ(z_{i})}d\nu(z_{i}),$$ 
satisfies a {\bf large deviation principle} with speed $k^{2}$ and good rate function $\mathcal I_{K,Q}^{\beta}$ defined by 
$$
\II^{\beta}_{K,Q}(\mu):=I_{\beta}^{Q}(\mu)- I_{\beta}^{Q}(\mu_{K,Q/\beta}),
$$
where
$$I_{\beta}^{Q}(\mu)=\int_{K}\int_{K}\log\frac{1}{|z-t|^{\beta}}d\mu(z)d\mu(t)+
2\int_{K}Q(z)d\mu(z)=\beta I^{Q/\beta}(\mu),\qquad\mu\in\MM(K).$$
 \end{theorem}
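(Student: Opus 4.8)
The plan is to reduce Theorem~\ref{ldp-free} to the $\beta = 1$ case already established in Theorem~\ref{ldp2}, by a change of variables that absorbs the power $\beta$ into the weight. First I would observe that
$$\prod_{1\leq i<j\leq k}|z_i-z_j|^{2\beta}\prod_{i=1}^k e^{-2kQ(z_i)} = |VDM_k(z_1,\dots,z_k)|^{2\beta}\prod_{i=1}^k e^{-2kQ(z_i)} = \bigl|VDM_k^{\,Q/\beta,\,\mathrm{shifted}}(z_1,\dots,z_k)\bigr|^{2\beta}\cdot(\text{correction}),$$
more precisely that writing $\tilde Q := Q/\beta$ and recalling $|VDM_k^{\tilde Q}({\bf z})| = |VDM_k({\bf z})|\prod_j e^{-(k-1)\tilde Q(z_j)}$, one has $\prod_{i<j}|z_i-z_j|^{2\beta}\prod_i e^{-2kQ(z_i)} = \bigl(|VDM_k^{\tilde Q}({\bf z})|^2\bigr)^{\beta}\prod_i e^{-2\tilde Q(z_i)}$, since $2k\tilde Q = 2(k-1)\tilde Q + 2\tilde Q$ and the factor $2(k-1)\tilde Q$ summed over $i$ matches the $\prod_j e^{-2(k-1)\tilde Q(z_j)}$ coming from squaring the weighted Vandermonde. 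Thus $\hat Z_{\beta,k}^Q = \int_{K^k}\bigl(|VDM_k^{\tilde Q}({\bf z})|^2\bigr)^{\beta}\prod_i e^{-2\tilde Q(z_i)}\,d\nu(z_i)$, and the extra bounded factor $\prod_i e^{-2\tilde Q(z_i)}$ only changes the measure $\nu$ by a continuous density near any compact set and is negligible at the $k(k-1)$-th root scale.

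Next I would redo the two structural ingredients with $L^{\beta}$ in place of $L^1$. The Bernstein-Markov arguments in Section~\ref{Sec-BM} (Proposition~\ref{weightedtd}, and the extensions in Section~8) used only H\"older's inequality (Remark~\ref{lp}), so they go through verbatim with the $L^{\beta}$-Bernstein-Markov property; the relevant normalization is exactly (\ref{BM-free}), where the exponent $k+1$ rather than $k$ accounts for the $2k$ (versus $2(k-1)$) in the exponentials after one pulls out the single-variable factor. One gets $\lim_k (\hat Z_{\beta,k}^Q)^{1/k(k-1)} = (\delta^{\tilde Q}(K))^{\beta} = \exp(-\beta V_{w_{\tilde Q}})$ from the $\beta$-analogue of Proposition~\ref{ZQ-C}, which is precisely the Proposition stated just before Theorem~\ref{ldp-free}. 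Then the $J$-functional analysis of Sections~6--10 applies with $|VDM_k^{\tilde Q}|^2$ replaced by $|VDM_k^{\tilde Q}|^{2\beta}$ (equivalently, the $J_k$ raised to the $\beta$ power inside the $k(k-1)$-th root), yielding $\log J^{\beta,\tilde Q}(\mu) = -\beta I^{\tilde Q}(\mu) = -I_{\beta}^Q(\mu)$, since the upper semicontinuity lemmas, the approximation results of Section~5, and the Bernstein-Walsh / $S_w^*$-localization of Section~8 are all insensitive to the uniform exponent $\beta$.

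Finally I would invoke Proposition~\ref{dzprop1} exactly as in the proof of Theorem~\ref{ldp}: for $G$ in a base of the weak topology,
$$\frac{1}{k^2}\log\tilde\sigma_k(G) = \frac{\beta(k-1)}{k}\log J_k^{\tilde Q}(G) + \frac{1}{k^2}\log\Bigl(\int_{\tilde G_k}\!\!\prod_i e^{-2\tilde Q(z_i)}d\nu\Bigr) - \frac{1}{k^2}\log\hat Z_{\beta,k}^Q,$$
where the middle term is $o(1)$ uniformly and the last term converges to $\log(\delta^{\tilde Q}(K))^{\beta} = -I_{\beta}^Q(\mu_{K,Q/\beta})$. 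The $\limsup$ and $\liminf$ over shrinking $G\ni\mu$ of $\log J_k^{\tilde Q}(G)^{\beta}$ both equal $-I_{\beta}^Q(\mu)$ by the $\beta$-version of Theorem~\ref{85}, so Proposition~\ref{dzprop1} delivers the LDP with speed $k^2$ and rate function $\mathcal I_{K,Q}^{\beta}(\mu) = I_{\beta}^Q(\mu) - I_{\beta}^Q(\mu_{K,Q/\beta})$; goodness follows as before since $I_{\beta}^Q = \beta I^{\tilde Q}$ is lower semicontinuous on the compact space $\mathcal M({\bf S})$ under $T_*$, and independence of $\nu$ is inherited from Theorem~\ref{85}. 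The main obstacle I anticipate is purely bookkeeping rather than conceptual: one must carefully track that the $L^{\beta}$-Bernstein-Markov hypothesis (\ref{BM-free}) with exponent $k+1$ is what is actually needed at each step (in particular in the infinite-mass case, where Remark~\ref{BM-Simeo} fails and one must genuinely assume the strong Bernstein-Markov property of $(K,\nu)$), and that the conditions (\ref{cond-nu-free})--(\ref{cond-eps-free}) are exactly the images under $T^{-1}$ of (\ref{cond-nu-S})--(\ref{cond-eps-S}) for the weight $\tilde Q = Q/\beta$; once this correspondence is set up, every estimate is a rescaled copy of one already proved.
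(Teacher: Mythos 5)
Your proposal follows essentially the same route as the paper's (sketched) proof: rewrite the $\beta$-ensemble density in terms of $|VDM_k^{Q/\beta}|^{2\beta}$, use the $L^{\beta}$ Bernstein-Markov property in the form (\ref{BM-free}) to obtain the $\beta$-analogues of Propositions \ref{weightedtd}/\ref{ZQ-C} and of Theorems \ref{rel-J-E} and \ref{85} for the functionals built from $J^Q_{\beta,k}(G)=\bigl[\int_{\tilde G_k}|VDM_k^{Q/\beta}({\bf a})|^{2\beta}d\nu({\bf a})\bigr]^{1/k(k-1)}$, and then run the Proposition \ref{dzprop1}/contraction argument of Theorems \ref{ldp} and \ref{ldp2}, noting that Remark \ref{BM-Simeo} no longer applies so the strong Bernstein-Markov hypothesis is needed even in the infinite-mass case. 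Two small corrections: the leftover per-variable factor is $\prod_i e^{-2Q(z_i)}$ (that is, $e^{-2\beta \tilde Q}$ with $\tilde Q=Q/\beta$), not $\prod_i e^{-2\tilde Q(z_i)}$; and your displayed decomposition of $\frac{1}{k^2}\log\tilde\sigma_k(G)$ turns the integral of a product into a product of integrals, which is not an identity -- since $Q$ is only bounded below, $e^{-2Q}$ is bounded above but not below, so the factor is harmless for the upper bound at the $k(k-1)$-root scale, while for the lower bound it should be kept inside the integrals and handled as in Lemma \ref{lem-eq-case}, via $Prob_k$ and the asymptotics of $\hat Z^{Q}_{\beta,k}$, rather than declared uniformly $o(1)$.
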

 \begin{proof}
 One checks that all arguments in the proof of Theorem \ref{ldp2} go through when considering the probability distribution (\ref{distr-rmt}) instead of the one in (\ref{probk}). In particular, this entails verifying the analogue of Theorems \ref{rel-J-E} and \ref{85}, namely that, for appropriate assumptions on $\nu$ and $Q$, one has
 $$\log \overline J_{\beta}^{Q}(\mu)=\log \underline J_{\beta}^{Q}(\mu)=-I_{\beta}^{Q}(\mu),$$
 where the functionals $\overline J_{\beta}^{Q}$ and $\underline J_{\beta}^{Q}$ are derived from
$$J^Q_{\beta,k}(G):=\Big[\int_{\tilde G_{k}}|VDM^{Q/\beta}_k({\bf a})|^{2\beta}d\nu ({\bf a})\Big]^{1/k(k-1)},\qquad G\subset\MM(K),$$
in the same way as in Definition \ref{jwmuq}. Since Remark \ref{BM-Simeo} does not apply for the Bernstein-Markov property (\ref{BM-free}), the strong Bernstein-Markov property is needed even if $\nu$ has infinite mass.
 \end{proof}
As an example, we take $K=\R$, $d\nu=d\lambda=$ Lebesgue measure on $\R$, and $Q$ a continuous weight such that there exists $\beta'>\beta$ with $Q/\beta'$ weakly admissible:
\begin{equation}\label{q-beta}
\exists M>-\infty,\qquad\liminf_{|z|\to\infty, \ z\in K}(Q(z)-\beta'\log|z|)=M.
\end{equation}
Note that this implies that $Q/\beta$ is admissible. Also, (\ref{cond-nu-free}) and (\ref{cond-eps-free}) hold true with $\epsilon(z)=|z|^{\beta-\beta'}$. The triple 
$(\R,d\lambda,Q/\beta)$ satisfies the weighted Bernstein-Markov property since $Q/\beta$ is admissible, cf., Example \ref{example-BM}. The measure $d\lambda$ likely also satisfies a strong Bernstein-Markov property, but, as already mentioned in the proof of Theorem \ref{ldp2}, for an LDP it is sufficient that this property is satisfied for weights which correspond via $T$ to continuous weights on the sphere ${\bf S}$ of the form $-U^{\mu_{m}}$ where $\mu_{m}\in\MM(T(\R))$ are the measures from Corollary \ref{corapprox}. Moreover, the proof of Lemma \ref{lemma-scal} shows that the supports of the measures $\mu_{m}$ can be chosen to avoid a neighborhood (depending on $m$) of the north pole $P_{0}$ so that the push-backward measures $\nu_{m}=T_{*}^{-1}\mu_{m}$ have compact supports in $\C$. Using the relations (\ref{pot-C-S}) and (\ref{tildeq}), what is then needed is that $d\lambda$ satisfies the Bernstein-Markov property for continuous weights of the form
$$Q_{m}(z)=-U^{\nu_{m}}(z)-\frac12\int\log(1+|t|^{2})d\nu_{m}(t),$$
where $\nu_{m}\in\MM(\R)$ has compact support. These weights are weakly admissible. Hence, by Theorem \ref{BW-C} and the continuity of $Q_{m}$, the corresponding weighted polynomials attain their sup norm on $S_{w_{m}}$ (where $w_{m}=e^{-Q_{m}}$), which is equal to the support of $\nu_{m}$, see Lemma \ref{lem-non-adm} (or more precisely its analogue in $\C$). Consequently, we need that $d\lambda$ satisfies the Bernstein-Markov property for continuous weights on a compact set, which we know holds true (cf., the discussion in Example \ref{example-BM}). Thus we conclude that
the large deviation principle asserted in Theorem \ref{ldp-free} applies on the real line for $d\nu=d\lambda$ and any continuous weight $Q$ satisfying (\ref{q-beta}). We note that this includes the large deviation principle for the law of the spectral measure of Gaussian Wigner matrices (\cite{BG}, \cite[Theorem 2.6.1]{AGZ}) as well as the refined version for weakly confining potentials given in \cite{H}.

When $K=\C$, $d\nu=dm=$ planar Lebesgue measure, and $Q$ is a continuous weight on $\C$ satisfying the growth condition (\ref{q-beta}), assumptions (\ref{cond-nu-free}) and (\ref{cond-eps-free}) still hold true with $\epsilon(z)=|z|^{\beta-\beta'}$. Moreover, the measure $dm$ satisfies the required Bernstein-Markov properties. Hence, Theorem \ref{ldp-free} applies when $K=\C$, $d\nu=dm$, and $Q$ is a continuous weight satisfying (\ref{q-beta}).


\end{document}